\renewcommand \a{\alpha}
\renewcommand \b{\beta}
\newcommand \K{\delta}
\newcommand \la{\lambda}
\newcommand \id{\mathrm{id}}
\newcommand \br{\mathbb{R}}
\newcommand \bc{\mathbb{C}}
\newcommand \rk{\operatorname{rk}}
\newcommand \Span{\operatorname{Span}}
\newcommand \Ker{\operatorname{Ker}}
\newcommand \End{\operatorname{End}}
\newcommand \Ric{\operatorname{Ric}}
\newcommand \ric{\operatorname{ric}}
\newcommand \Tr{\operatorname{Tr}}
\newcommand \db{\partial}
\newcommand\g{\mathfrak g}
\newcommand\h{\mathfrak h}
\newcommand\sg{\mathfrak{s}}
\newcommand \gn{\mathfrak{n}}
\newcommand \gm{\mathfrak{m}}
\newcommand \ad{\operatorname{ad}}
\newcommand \diag{\operatorname{diag}}
\renewcommand \G{\Gamma}
\newcommand \n{\nabla}
\newcommand \Om{\Omega}
\renewcommand \div{\operatorname{div}}
\newcommand \grad{\operatorname{grad}}
\newcommand \scal{\operatorname{scal}}
\newcommand \<{\langle}
\renewcommand \>{\rangle}
\newcommand \ip{\langle \cdot, \cdot \rangle}
\newcommand \ipb{\overline{\langle \cdot, \cdot \rangle}}
\theoremstyle{plain}
\newtheorem{theorem}{Theorem}
\newtheorem*{theorem*}{Theorem}
\newtheorem{corollary}{Corollary}
\newtheorem*{conj*}{Conjecture}
\newtheorem{lemma}{Lemma}
\newtheorem*{prop*}{Proposition}
\theoremstyle{definition}
\newtheorem{definition}{Definition}
\newtheorem*{definition*}{Definition}
\theoremstyle{remark}
\newtheorem{remark}{Remark}
\newtheorem{example}{Example}
\begin{document}

\title{Einstein extensions of Riemannian manifolds}

\author{D.Alekseevsky}
\address{IITP, Russian Academy of Sciences, Moscow 127051, Russia}
\email{dalekseevsky@iitp.ru}

\author{Y.Nikolayevsky}
\address{Department of Mathematics and Statistics, La Trobe University, Melbourne 3086, Australia}
\email{y.nikolayevsky@latrobe.edu.au}

\thanks{The authors were partially supported by ARC Discovery Grant DP130103485.}

\subjclass[2010]{Primary: 53C25, 53B20; secondary: 53C30}

\keywords{Ricci tensor, Einstein manifold, Einstein solvmanifold}

\begin{abstract}
Given a Riemannian space $N$ of dimension $n$ and a field $D$ of symmetric endomorphisms on $N$, we define the extension $M$ of $N$ by $D$ to be the Riemannian manifold of dimension $n+1$ obtained from $N$ by a construction similar to extending a Lie group by a derivation of its Lie algebra. We find the conditions on $N$ and $D$ which imply that the extension $M$ is Einstein. In particular, we show that in this case, $D$ has constant eigenvalues; moreover, they are all integer (up to scaling) if $\det D \ne 0$. They must satisfy certain arithmetic relations which imply that there are only finitely many eigenvalue types of $D$ in every dimension (a similar result is known for Einstein solvmanifolds). We give the characterisation of Einstein extensions for particular eigenvalue types of $D$, including the complete classification for the case when $D$ has two eigenvalues, one of which is multiplicity free. In the most interesting case, the extension is obtained, by an explicit procedure, from an almost K\"{a}hler Ricci flat manifold (in particular, from a Calabi-Yau manifold). We also show that all Einstein extensions of dimension four are Einstein solvmanifolds. A similar result holds valid in the case when $N$ is a Lie group with a left-invariant metric, under some additional assumptions.
\end{abstract}

\maketitle

\section{Introduction}
\label{s:intro}

The construction and the study of Einstein manifolds is one of the main avenues of Riemannian Geometry. One of the starting points of our paper is the theory of Einstein homogeneous manifolds of negative scalar curvature. Assuming \emph{Alekseevsky Conjecture} (and the fact that the isometry group is linear), such manifolds are necessarily solvmanifolds, solvable Lie groups with a left-invariant Einstein metric. At present, the theory of Einstein solvmanifolds is very well developed \cite{Lsurv}. The basic construction is as follows. At the level of Lie algebras, one starts with a nilpotent Lie algebra $\gn$ with a special \emph{nilsoliton inner product} characterised by the property that its Ricci operator is a linear combination of the identity operator and the \emph{Einstein derivation} $D$. The derivation $D$ is always symmetric and its eigenvalues, up to scaling, are natural numbers (not every nilpotent Lie algebra admits such a derivation and such an inner product; those which do are called \emph{nilsolitons}). The rank one extension of $\gn$ by $D$ is a solvable Lie algebra $\sg$. Extending the inner product from $\gn$ to $\sg$ in such a way that the extension is orthogonal (and choosing the correct scaling factor) one obtains a metric Einstein solvable Lie algebra whose solvable Lie group, with the corresponding left-invariant metric, is an Einstein solvmanifold $M$. All rank one Einstein extensions can be obtained in this way and the higher rank extensions can be obtained from rank one extensions by a known procedure \cite[Theorem~4.18]{Heb} and \cite{Lstand}. One can see that the resulting Riemannian metric on $M$ has precisely the form as in the definition below. 

The main idea of this paper is to drop the homogeneity assumption and to construct rank one Einstein extensions of arbitrary Riemannian manifolds by a field of symmetric endomorphisms $D$, as described below.

\begin{definition} \label{def:main}
Let $(N, g)$ be a Riemannian manifold of dimension $n > 1$, and $D$ a field of symmetric endomorphisms on $(N, g)$. For $u \in \br$, the \emph{$D$-deformation} of the metric $g$ on $N$ is the metric on $N$ given by $g^u:=(\exp(uD))^*g$. The \emph{$D$-extension} is the Riemannian manifold $(M, g^D)$ given by
\begin{equation*}
  (M:=\br \times N, \; g^D:= du^2 + g^u).
\end{equation*}
\end{definition}

When $D$ has eigenvalues $q_1, \dots, q_m$ of constant multiplicities and $V(q_i)$ are the corresponding eigendistributions, the $D$-deformation is given by $g^u = e^{2q_1u} g_1 + \dots + e^{2q_mu} g_m$, and the $D$-extension, by
\begin{equation*}
g^D = du^2 + g^u = du^2 + e^{2q_1u} g_1 + \dots + e^{2q_mu} g_m,
\end{equation*}
where $g_i:= g_{|V(q_i)}$. Clearly, $D$ remains symmetric with respect to all the metrics $g^u$ on $N$.

This construction, both in the Riemannian and pseudo-Riemannian cases, is known in the literature (see e.g., \cite{Herv}) and also appears in the theory of Riemannian submersions \cite[Chapter~9]{Bes}. It is a generalisations of the warped product metrics; note however that we make no assumptions on the integrability of the eigendistributions of $D$.

\begin{definition} \label{def:stable}
A manifold $(N, g)$ (and the metric $g$ on the manifold $N$) is called \emph{Ricci $D$-stable} if the Ricci operator $\Ric^u := \Ric_{g^u}$ does not depend on $u$, and is called \emph{$D$-Einstein} if the extension $(M,g^D)$ is Einstein.
\end{definition}

Our main question is, when a metric is $D$-Einstein, or in other words, \emph{when the extension $(M, g^D)$ is Einstein}? As we will see, in many cases, this general construction bears a remarkable resemblance to the homogeneous (the solvmanifold) case, and in some cases (as in Theorem~\ref{th:dim3}), the Einstein condition even \emph{implies} the homogeneity. Below we present the structure of the paper and the main results.

\smallskip

In Section~\ref{s:ric}, we compute the Ricci tensor of $(M, g^D)$ and prove the following theorem which gives a necessary and sufficient condition for a Riemannian manifold $(N,g)$ to admit an Einstein $D$-extension.

\begin{theorem}\label{th:Dconst}
Let $(M, g^D)$ be the $D$-extension of $(N,g)$. Then $(M, g^D)$ is Einstein if and only if the following two conditions are satisfied:
\begin{enumerate}[label=\emph{(\alph*)},ref=\alph*]
  \item \label{it:Dconst1}
  The endomorphism $D$ has constant eigenvalues and
  \begin{equation} \label{eq:ein1div}
  \div D = 0,
  \end{equation}
  where $\div$ is the divergence relative to $g$ \emph{(}so that $(\div D)X=\Tr (Y \mapsto (\nabla_Y D)X)$\emph{)}.

  \item \label{it:Dconst2}
  The manifold $(N,g)$ is Ricci $D$-stable and
  \begin{equation} \label{eq:ein1Ru}
    \Ric^u= (\Tr D) \, D - \Tr (D^2) \, \id.
  \end{equation}
  \end{enumerate}
The Einstein constant of $g^D$ is $- \Tr (D^2)$.
\end{theorem}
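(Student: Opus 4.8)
The plan is to compute the Ricci tensor of $(M,g^D)$ directly in the natural splitting $TM=\br\,\partial_u\oplus TN$ and then read off the Einstein equation block by block. Since $g^u=(\exp(uD))^*g=g(\exp(2uD)\cdot,\cdot)$, one has $\tfrac12\partial_u g^u=g^u(D\cdot,\cdot)$, so the integral curves of the unit field $\partial_u$ are geodesics and $D$ is (minus) the shape operator of the slices $\{u\}\times N$. First I would record the Levi--Civita connection $\nabla$ of $g^D$ in terms of the connection $\nabla^u$ of $g^u$: for $u$-independent vector fields $X,Y$ on $N$,
\[
  \nabla_{\partial_u}\partial_u=0,\qquad \nabla_X\partial_u=\nabla_{\partial_u}X=DX,\qquad \nabla_XY=\nabla^u_XY-g^u(DX,Y)\,\partial_u .
\]
A short computation using $[\partial_u,DX]=0$ then gives $\nabla_{\partial_u}D=0$.

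Next I would compute the three blocks of $\Ric^{g^D}$. The radial (Riccati) equation $R(\partial_u,X)\partial_u=\nabla_{\partial_u}(DX)=D^2X$ yields $\Ric^{g^D}(\partial_u,\partial_u)=-\Tr(D^2)$. The Codazzi identity, which here is purely tangential and reads $R(X,Y)\partial_u=(\nabla^u_XD)Y-(\nabla^u_YD)X$, after contraction and using that $D$ is $g^u$-symmetric gives the mixed block $\Ric^{g^D}(\partial_u,X)=(\div D)(X)-X(\Tr D)$, with divergence and trace taken relative to $g^u$. Finally the Gauss equation for the slices, after the two $D^2$-contributions cancel, gives the tangential block $\Ric^{g^D}(X,Y)=g^u(\Ric^uX,Y)-(\Tr D)\,g^u(DX,Y)$.

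With these formulas the Einstein condition $\Ric^{g^D}=c\,g^D$ splits into three parts: (i) $c=-\Tr(D^2)$; (ii) $\div D=d(\Tr D)$ for every $u$; and (iii) $\Ric^u=(\Tr D)\,D+c\,\id$ for every $u$. Equation (iii) is exactly \eqref{eq:ein1Ru}, and since its right-hand side is $u$-independent it forces $\Ric^u$ to be independent of $u$, i.e.\ Ricci $D$-stability; this establishes \ref{it:Dconst2}. Schur's lemma, valid as $\dim M=n+1\ge3$, makes $c$ and hence $\Tr(D^2)$ constant, giving the stated Einstein constant. The converse is then immediate: assuming \ref{it:Dconst1} and \ref{it:Dconst2}, constancy of the eigenvalues makes $\Tr D$ constant, so (ii) reduces to $\div D=0$ and (i)--(iii) hold, whence $g^D$ is Einstein.

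The main obstacle is the remaining half of \ref{it:Dconst1}: that the Einstein condition forces $D$ to have \emph{constant} eigenvalues. The idea is to exploit that (ii) and (iii) hold for all $u$ simultaneously. Feeding (iii) into the contracted second Bianchi identity $\div\Ric^u=\tfrac12\,d(\scal^u)$, and using (ii) together with $\Tr(D^2)=\mathrm{const}$, I expect to obtain $D\,\grad(\Tr D)=0$; in particular $\Tr D$ is constant whenever $\det D\neq0$. To reach constancy of each eigenvalue in general, I would differentiate the $u$-independent identities (ii)--(iii) in $u$, using the first-variation formulas for the Christoffel symbols and for the Ricci tensor along the deformation $\partial_u g^u=2g^u(D\cdot,\cdot)$; this converts the $u$-independence of $\Ric^u$ into a second-order identity for $D$ which, combined with the Bianchi relation above, should force $\grad q_i=0$ for every eigenvalue $q_i$. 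Controlling this second-order identity, and the behaviour across points where the multiplicities of $D$ jump, is the delicate point of the argument.
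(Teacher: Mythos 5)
Your block computation is essentially correct and is a coordinate-free repackaging of what the paper does in Section~\ref{ss:ricDext}: your Riccati, Codazzi and Gauss blocks reproduce exactly \eqref{eq:ric00}, \eqref{eq:ric0i} and \eqref{eq:ricij} (the paper obtains them by moving frames in an eigenframe of $D$), and from these the Einstein constant $-\Tr(D^2)$, equation \eqref{eq:ein1Ru} and Ricci $D$-stability follow as you say. Two smaller remarks. First, in the converse direction you need the mixed block to vanish for \emph{every} $u$, i.e.\ $\div^u D=d(\Tr D)$ with the divergence taken relative to $g^u$, whereas hypothesis \eqref{it:Dconst1} gives $\div D=0$ only for $g=g^0$; when the eigenvalues are constant one checks (this is what \eqref{eq:ric0i} encodes, since then $\ric_{0i}=\mp e^{-up_i}\,\overline{\<\overline{e}_i,\sum_j(\overline{\n}_{\overline{e}_j}D)\overline{e}_j\>}$) that vanishing at $u=0$ implies vanishing for all $u$, but this line is missing from your argument. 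Second, your contracted-Bianchi step does work and yields $D\,\grad(\Tr D)=0$, but note this constrains only $\Tr D$, and only where $D$ is invertible.

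The genuine gap is the remaining half of \eqref{it:Dconst1}: that the Einstein condition forces the eigenvalues of $D$ to be \emph{constant}. This is the crux of the theorem --- everything else is a routine submersion-type computation --- and your proposal does not prove it: the phrases ``I expect to obtain'' and ``should force'' occur precisely at this step. The paper closes it by a mechanism your sketch does not contain. In an eigenframe on a domain where the multiplicities are constant, the derivatives $\overline{e}_j(p_i)$ enter the connection components \eqref{eq:gijk} multiplied by $u$, so every entry of $\Ric^u$ is a finite sum of terms $f_\a e^{ul_\a}$, $f_\a u e^{ul_\a}$, $f_\a u^2e^{ul_\a}$; $u$-independence forces the $[u^2\exp]$-part to vanish identically, and taking the trace of those equations and using $p_i\overline{e}_i(\Tr D)=0$ (equation \eqref{eq:dTr}) produces
\begin{equation*}
\sum\nolimits_i e^{-2up_i}\bigl(2(\overline{e}_i(p_i))^2+\sum\nolimits_k(\overline{e}_i(p_k))^2\bigr)
    +\sum\nolimits_k e^{-2up_k}(\overline{e}_k(\Tr D))^2=0,
\end{equation*}
a sum of squares, whence $\overline{e}_i(p_k)=0$ pointwise; a density argument then handles the points where the multiplicities of $D$ jump. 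Your alternative plan --- differentiating the identities in $u$ via the first-variation formula for the Ricci tensor under $\partial_u g^u = 2g^u(D\cdot,\cdot)$ --- would produce a second-order identity coupling Hessians of the $p_i$ to the curvature of $g$, and nothing in the sketch indicates how the pointwise vanishing of every $\grad p_i$ (as opposed to some contracted, divergence-type consequence) would be extracted from it: no positivity or sum-of-squares mechanism analogous to the above is identified, and the multiplicity-jump issue is flagged but not resolved. Until that step is carried out, the ``only if'' direction of the theorem remains unproved.
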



\begin{example} \label{ex:id}
A Ricci flat manifold $(N^n, g)$ is $\id$-Einstein, i.e. the metric $g^{\id} = du^2 + e^{2u}g$ is Einstein with the Einstein constant $-n$. In particular, if $g$ is Euclidean, then $g^{\id}$ is a hyperbolic metric written in horospherical coordinates. The converse (``any $\id$-Einstein manifold is Ricci flat") follows from~\eqref{eq:ein1Ru}.
\end{example}

\begin{example} \label{ex:product}
A direct product $(N_1 \times N_2, g_1+g_2)$ of Ricci $D_i$-stable manifolds $(N_i, g_i), \; i = 1, 2$, is Ricci $D$-stable, where $D=D_1 \oplus D_2$. Moreover, it is $D$-Einstein if and only if for $i = 1, 2$, condition~\eqref{it:Dconst1} of Theorem~\ref{th:Dconst} is satisfied and $\Ric_{g_i} = (\Tr D) D_i - \Tr(D^2) \id_{N_i}$.
\end{example}

\begin{example} \label{ex:group}
Let $(N, g)$ be a Lie group with a left-invariant metric and $D$ be defined by a symmetric derivation of the Lie algebra of $N$. Then $(N, g)$ is $D$-stable (see Section~\ref{s:homo}). 
\end{example}


In Section~\ref{s:eigen} we study the \emph{eigenvalue type of $D$}, the vector $\mathbf{p}=(p_1, \dots, p_n)^t$ of its eigenvalues (recall that all of them must be constant by Theorem~\ref{th:Dconst}). We call $\mathbf{p}$ \emph{the spectral vector}. We show in Lemma~\ref{l:scalar} that $D$ is scalar if and only if $(N, g)$ is Ricci flat, and that in all the other cases, the eigenvalues satisfy some restrictions, in particular, some nontrivial relations of the form $p_k=p_i+p_j$.

In the case when all $p_i$ are nonzero, we can be much more specific. In the Euclidean space $\br^n$ with an orthonormal basis $f_i, \; i=1, \dots n$, introduce the subset $F=\{f_i+f_j-f_k \, : \, i \ne j, \, k \ne i,j\}$ and consider the vectors $\mathbf{p}=(p_1, \dots, p_n)^t$ and $\mathbf{1}_n=(1,\dots,1)^t$ ($n$ ones). In the (finite) set $F \cap \mathbf{p}^\perp$, choose a maximal linearly independent subset $F_{\mathbf{p}}=\{v_1, \dots, v_m\}$ (any of them, if there are more than one). Let $V$ be an $n \times m$ matrix whose vector columns are the vectors $v_a$ (so that if $v_a=f_i+f_j-f_k \in F_{\mathbf{p}}$, then the $a$-th column of $V$ has a one in the $i$-th and in the $j$-th rows, a minus one in the $k$-th row, and zeros elsewhere). The following theorem is analogous to \cite[Theorem~4.14, Corollary~4.17]{Heb} for rank one Einstein solvmanifolds.

\begin{theorem}\label{th:nonzero}
Let $(M,g^D)$ be an Einstein $D$-extension of $(N,g)$ with $\det D \ne 0$.
\begin{enumerate}[label=\emph{(\roman*)},ref=\roman*]
  \item \label{it:nonzero1}
  Then the projection of $\mathbf{1}_n$ to $\mathbf{p}^\perp$ belongs to the convex cone hull of the set $F \cap \mathbf{p}^\perp$.
  \item \label{it:nonzero2}
  If, in addition, $\Tr D \ne 0$, then up to scaling, the spectral vector is given by
  \begin{equation}\label{eq:eigenstr}
    \mathbf{p}=\mathbf{1}_n-V(V^tV)^{-1}\mathbf{1}_m.
  \end{equation}
  In particular, up to scaling, all the $p_i$ are integers. Moreover, for every dimension $n$, there is only a finite number of the eigenvalue types of the operators $D$, with $\det D \ne 0$ and $\Tr D \ne 0$.
\end{enumerate}
\end{theorem}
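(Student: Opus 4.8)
The plan is to reduce the statement to a convex-geometric fact about the spectral vector $\mathbf{p}$, fed by the Ricci data of Theorem~\ref{th:Dconst}. By that theorem the $p_i$ are constant and $\Ric^u=(\Tr D)D-\Tr(D^2)\id$; writing $\mathbf{r}=(r_1,\dots,r_n)^t$ for the eigenvalues of $\Ric^u$ with multiplicities in an eigenbasis of $D$, this is $\mathbf{r}=(\Tr D)\,\mathbf{p}-\Tr(D^2)\,\mathbf{1}_n$. Since $\Tr D=\langle\mathbf{1}_n,\mathbf{p}\rangle$ and $\Tr(D^2)=\langle\mathbf{p},\mathbf{p}\rangle$, one gets $\langle\mathbf{r},\mathbf{p}\rangle=0$ for free, so $\mathbf{r}\in\mathbf{p}^\perp$. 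Projecting the displayed identity orthogonally onto $\mathbf{p}^\perp$ (write $P_{\mathbf{p}^\perp}$ for this projection) therefore yields
\begin{equation*}
-\mathbf{r}=\Tr(D^2)\,P_{\mathbf{p}^\perp}\mathbf{1}_n ,
\end{equation*}
with $\Tr(D^2)=\sum_i p_i^2>0$ as $\det D\ne 0$. Thus part~(\ref{it:nonzero1}) is equivalent to the single assertion $-\mathbf{r}\in\operatorname{cone}(F\cap\mathbf{p}^\perp)$.

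The heart of the proof, and the step I expect to be the main obstacle, is to extract this cone membership from Ricci $D$-stability. Fix a point and a $g$-orthonormal frame $\{e_a\}$ adapted to the eigendistributions, with $e_a$ of eigenvalue $p_a$; then $\{\tilde e_a:=e^{-p_a u}e_a\}$ is $g^u$-orthonormal, and since $e^{-p_a u}$ depends only on the external parameter $u$ a Koszul computation gives the pure exponential weight
\begin{equation*}
\langle[\tilde e_a,\tilde e_b],\tilde e_c\rangle_{g^u}=e^{(p_c-p_a-p_b)u}\,\langle[e_a,e_b],e_c\rangle_{g}.
\end{equation*}
Hence $\ric^u(\tilde e_m,\tilde e_m)$ is an exponential polynomial in $u$, and Ricci $D$-stability forces it to be constant: every term of nonzero exponent vanishes and the constant term equals $r_m$. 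The surviving quadratic contributions come precisely from the resonant structure functions, those with $p_c=p_a+p_b$, i.e. from the vectors $f_a+f_b-f_c\in F\cap\mathbf{p}^\perp$ (cf. Lemma~\ref{l:scalar}); the sign pattern of the frame Ricci formula (as in the Heisenberg-type computation, a resonance $p_k=p_i+p_j$ lowers $r_i,r_j$ and raises $r_k$) makes each of these contribute a nonnegative multiple of $-(f_i+f_j-f_k)$ to $\mathbf{r}$, so that $-\mathbf{r}\in\operatorname{cone}(F\cap\mathbf{p}^\perp)$. The delicate point is that the non-square cross terms and the first-derivative terms of the structure functions can also carry zero exponent; these must be shown to cancel, using the vanishing of the nonzero-exponent coefficients together with $\div D=0$ and the off-diagonal identities $\ric^u(\tilde e_a,\tilde e_b)=0$ for $a,b$ in distinct eigenspaces, so as not to spoil the sign structure. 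This is the computational core.

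Granting the cone membership, part~(\ref{it:nonzero1}) is immediate. For part~(\ref{it:nonzero2}), let $W$ be the span of the columns of $V$, so $W=\Span(F\cap\mathbf{p}^\perp)=\Span(F_{\mathbf{p}})$. Then $\mathbf{p}\in W^\perp$ (as $\mathbf{p}\perp F_{\mathbf{p}}$), while $P_{\mathbf{p}^\perp}\mathbf{1}_n\in\operatorname{cone}(F\cap\mathbf{p}^\perp)\subseteq W$ by part~(\ref{it:nonzero1}). Decomposing $\mathbf{1}_n=P_{\mathbf{p}^\perp}\mathbf{1}_n+\tfrac{\Tr D}{\Tr(D^2)}\mathbf{p}$ and projecting onto $W^\perp$ kills the first summand and fixes the second, giving $P_{W^\perp}\mathbf{1}_n=\tfrac{\Tr D}{\Tr(D^2)}\mathbf{p}$; here the hypothesis $\Tr D\ne0$ is exactly what makes this a nondegenerate rescaling. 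As the columns of $V$ are linearly independent, $P_{W^\perp}=\id-V(V^tV)^{-1}V^t$, and since each column $f_i+f_j-f_k$ has entries summing to $1$ we have $V^t\mathbf{1}_n=\mathbf{1}_m$; therefore $P_{W^\perp}\mathbf{1}_n=\mathbf{1}_n-V(V^tV)^{-1}\mathbf{1}_m$, which is \eqref{eq:eigenstr} up to the positive factor $\Tr(D^2)/\Tr D$.

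Integrality and finiteness then follow formally from \eqref{eq:eigenstr}. The matrix $V$ is integral, so $(V^tV)^{-1}\mathbf{1}_m$ is rational and $\mathbf{p}$ is rational; clearing denominators makes all $p_i$ integers. For finiteness, $F$ is a fixed finite set, hence has only finitely many linearly independent subsets $S$; each such $S$ produces at most one candidate $\mathbf{1}_n-V_S(V_S^tV_S)^{-1}\mathbf{1}_{|S|}$, and every admissible spectral vector with $\det D\ne0$ and $\Tr D\ne0$ arises in this way (taking $S=F_{\mathbf{p}}$). Thus in each dimension $n$ there are only finitely many eigenvalue types.
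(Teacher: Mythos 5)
Your proposal is correct and follows essentially the same route as the paper: reduce part (\ref{it:nonzero1}) to showing that the constant ($u$-independent) coefficient of the diagonal Ricci components in the rescaled eigenframe is a nonnegative combination of the vectors $-(f_i+f_j-f_k)$, then get \eqref{eq:eigenstr} by linear algebra with $V$. Two points are worth making. First, the step you defer as ``the computational core'' is exactly the paper's formula \eqref{eq:Ricuii} together with Remark~\ref{rem:survive}, both established \emph{before} the theorem and therefore available to you: when $\det D \ne 0$, the zero-exponent part of $(\Ric^u)_i^i$ is $\tfrac14\sum_{k,l:\,p_i=p_k+p_l}(\mu_{kl|i})^2-\tfrac12\sum_{k,l:\,p_l=p_k+p_i}(\mu_{ik|l})^2$, which is precisely the sign pattern you assert and which gives the paper's identity \eqref{eq:Fpperp}, namely $\sum_a(\tfrac12\mu_a^2)v_a=\|\mathbf{p}\|^2\mathbf{1}_n-\langle\mathbf{p},\mathbf{1}_n\rangle\mathbf{p}$. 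Second, your flagged ``delicate point'' is vacuous under the hypothesis $\det D\ne 0$: in \eqref{eq:Ricuii} every first-derivative term and every non-square cross term carries the exponent $-2up_i$ or $-2up_k$, which is nonzero because all $p_i\ne 0$, so these terms never enter the constant coefficient; no cancellation argument, and in particular no appeal to $\div D=0$ or to the off-diagonal components, is needed (those conditions are used elsewhere, not here). Your part (\ref{it:nonzero2}) --- projecting $\mathbf{1}_n$ onto the orthogonal complement of $W=\Span(F\cap\mathbf{p}^\perp)$ via the projector $\id-V(V^tV)^{-1}V^t$ --- is the same computation as the paper's, which instead solves $Vc=\|\mathbf{p}\|^2\mathbf{1}_n-\langle\mathbf{p},\mathbf{1}_n\rangle\mathbf{p}$ for the coefficient vector $c$ using $V^t\mathbf{1}_n=\mathbf{1}_m$ and $V^t\mathbf{p}=0$; your integrality and finiteness arguments coincide with the paper's.
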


If the operator $D$ is non-scalar, the simplest possible case to consider is when it has an eigenvalue of multiplicity $n-1$, so that $\mathbf{p}=(\lambda, \dots, \lambda, \nu)^t, \; \la \ne \nu$. Up to scaling, we can have $(\la, \nu)=(0,1), (1,0)$, or $(1,2)$, where the fact that $\nu/\la=2$ in the latter case follows from Theorem~\ref{th:nonzero} (or from Lemma~\ref{l:scalar}\eqref{it:sc2} below). These three eigenvalue types are studied in Section~\ref{s:Heis}. If $\mathbf{p}=(0, \dots, 0, 1)^t$, we have the following theorem. 

\begin{theorem}\label{th:01}
Let $(M,g^D)$ be the $D$-extension of $(N,g)$ with the spectral vector $\mathbf{p}=(0, \dots, 0, 1)^t$. If $(M,g^D)$ is Einstein, then $(N,g)$ is locally isometric to the Riemannian product of the real line and an Einstein manifold $N'$ of dimension $n-1$ with the Einstein constant $-1$. The manifold $(M,g^D)$ is locally isometric to the Riemannian product of the hyperbolic plane of curvature $-1$ and $N'$.
\end{theorem}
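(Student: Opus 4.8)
My plan is to reduce everything to the single assertion that the unit field $e$ spanning the one-dimensional eigendistribution $V(1)$ is parallel; granting this, the de Rham decomposition and the explicit shape of the metric finish the proof. Since $(M,g^D)$ is Einstein and $\mathbf{p}=(0,\dots,0,1)^t$, Theorem~\ref{th:Dconst} applies, and here $\Tr D=\Tr(D^2)=1$, so \eqref{eq:ein1Ru} reads $\Ric^u=D-\id$ (independent of $u$) and the Einstein constant is $-1$. The operator $D$ is the orthogonal projection onto $V(1)=\Span(e)$, where $e$ is a local unit field, so the Ricci operator of $g=g^0$ annihilates $e$ and equals $-\id$ on $V(0)=e^\perp$; writing $\theta=e^\flat$, this says $\Ric\,\theta=0$.

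Next I extract the first-order content of \eqref{eq:ein1div}. Since $DX=\langle X,e\rangle e$, a direct computation gives $\div D=\nabla_e e+(\div e)\,e$. Pairing \eqref{eq:ein1div} with $e$ and using $\langle\nabla_X e,e\rangle=\tfrac12 X\langle e,e\rangle=0$ yields $\div e=0$, and then $\nabla_e e=0$. Thus the flow lines of $e$ are geodesics, $\theta$ is co-closed ($\delta\theta=-\div e=0$), and $\nabla_e\theta=0$.

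The crux is to upgrade these to $\nabla e=0$, and here I would invoke Ricci $D$-stability rather than the curvature of $g$ alone: the trace of the Riccati equation for the geodesic field $e$ only produces $\Tr((\nabla e)^2)=0$, which does not force $\nabla e=0$ pointwise because $\nabla e$ need not be normal. Instead, Ricci $D$-stability says the $(0,2)$ Ricci tensor of $g^u=g+(e^{2u}-1)\,\theta\otimes\theta$ is independent of $u$; differentiating at $u=0$ gives $D\Ric_g(k)=0$ with $k=2\,\theta\otimes\theta$. By the previous step $\Tr_g k=2$ is constant and $\operatorname{div}k=2(\div e)\theta+2(\nabla_e\theta)=0$, so the linearised Ricci operator collapses to $D\Ric_g(k)=\tfrac12\Delta_L k$, where $\Delta_L$ is the Lichnerowicz Laplacian $\Delta_L k=\nabla^*\nabla k+\Ric\circ k+k\circ\Ric-2\mathring{R}k$ with $(\mathring{R}k)_{ij}=R_{ikjl}k^{kl}$. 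Now the algebra does the work: since $\Ric\,\theta=0$, the terms $\Ric\circ k$ and $k\circ\Ric$ vanish identically, while contracting the curvature term against $k$ gives $\langle\mathring{R}k,k\rangle\propto R_{ikjl}\theta^i\theta^k\theta^j\theta^l=0$ by the antisymmetry of $R$ in its first pair against the symmetry of $\theta\otimes\theta$. As $|k|^2\equiv 4$ is constant, the Bochner identity gives $\langle\nabla^*\nabla k,k\rangle=|\nabla k|^2$, so taking the inner product of $\Delta_L k=0$ with $k$ forces $|\nabla k|^2=0$. Hence $\nabla k=0$; evaluating $\nabla(\theta\otimes\theta)=0$ on $(e,\cdot)$ and using $\langle\nabla_{\cdot}e,e\rangle=0$ then gives $\nabla\theta=0$, i.e. $\nabla e=0$. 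The main obstacle is precisely this passage to \emph{pointwise} parallelism: it is what makes the use of the $u$-deformation essential, and it is the special structure $\Ric\,\theta=0$ (forcing the Lichnerowicz Laplacian to reduce to the rough Laplacian on $k$) that lets me avoid any integration.

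With $e$ parallel, both $V(1)=\Span(e)$ and $V(0)=e^\perp$ are parallel distributions, so by the de Rham theorem $(N,g)$ splits locally as a Riemannian product $\br\times N'$, with $\br$ tangent to $e$ and $N'$ an integral leaf of $V(0)$ of dimension $n-1$. Since the Ricci tensor of a product splits and the $\br$-factor is flat, $\Ric_{N'}=\Ric_g|_{V(0)}=-\id$, so $N'$ is Einstein with Einstein constant $-1$. Finally, as $e$ is parallel we may write $\theta=dt$ for an arclength coordinate $t$, whence $g^u=g_{N'}+e^{2u}\,dt^2$ and $g^D=du^2+e^{2u}\,dt^2+g_{N'}$; the substitution $y=e^{-u}$ identifies $du^2+e^{2u}\,dt^2$ with $y^{-2}(dt^2+dy^2)$, the hyperbolic plane of curvature $-1$. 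Therefore $(M,g^D)$ is locally isometric to the Riemannian product of the hyperbolic plane of curvature $-1$ and $N'$, as claimed.
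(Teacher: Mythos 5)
Your proof is correct, but it follows a genuinely different route from the paper's. The paper stays inside its moving-frame machinery: in an adapted orthonormal frame $\overline{e}_1,\dots,\overline{e}_n$ of eigenvectors of $D$, it combines Lemma~\ref{l:scalar}\eqref{it:sc3} (which gives $\mu_{ij|n}=0$ for $i,j<n$), Lemma~\ref{l:multn-1} (the frame form of $\div D=0$), and the explicit formula \eqref{eq:Ricuii}, which here collapses to $(\Ric^u)_n^n=-e^{-2u}\sum_{k,l<n}(\mu_{nk|l})^2$; since \eqref{eq:ein1Ru} forces $(\Ric^u)_n^n=0$, all structure functions $\mu_{ij|k}$ with an index equal to $n$ vanish, hence $\overline{e}_n$ is parallel, and the splitting follows. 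You reach the same pivotal fact ($\nabla e=0$) by an invariant argument: the vector-field form of \eqref{eq:ein1div} gives $\nabla_e e=0$ and $\div e=0$; then you differentiate the $u$-family of Ricci tensors to get $\Delta_L k=0$ for $k=2\theta\otimes\theta$, kill the zeroth-order terms using $\Ric\,\theta=0$ and the curvature symmetry, and exploit the constancy of $|k|^2$ to make the Bochner identity pointwise, so no integration or compactness is needed — a nice touch, since the statement is purely local. What the paper's approach buys is uniformity: the same frame computations are reused verbatim for the eigenvalue types $(1,\dots,1,0)$ and $(1,\dots,1,2)$ in Theorems~\ref{th:10} and~\ref{th:12}, where the analogue of your parallelism conclusion fails and one must track exactly which $\mu$'s survive. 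What your approach buys is conceptual transparency and independence of the frame formalism, at the cost of invoking the linearization of Ricci and the Lichnerowicz Laplacian. One point to tighten: Ricci $D$-stability as defined in the paper (Definition~\ref{def:stable}) concerns the Ricci \emph{operator}, not the $(0,2)$-tensor; the constancy in $u$ of $\ric_{g^u}=g^u(\Ric^u\cdot,\cdot)$, which is what your differentiation step actually uses, requires additionally that $\Ric^0 e=0$, i.e.\ exactly the consequence of \eqref{eq:ein1Ru} you record at the start — so the claim is true, but you should say explicitly that it is the combination of the two parts of Theorem~\ref{th:Dconst}\eqref{it:Dconst2}, not stability alone, that yields it.
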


When $\mathbf{p}=(1, \dots, 1, 0)^t$ we show that $(M, g^D)$ is a warped product with a two-dimensional base, and that it can be obtained as a ``double extension", by two commuting extensions, of a Ricci flat manifold $N'$ (Theorem~\ref{th:10}).

In the third case (which is probably the most interesting one), we prove the following theorem.

\begin{theorem}\label{th:12}
Let $(M,g^D)$ be the $D$-extension of $(N,g)$ with the spectral vector $\mathbf{p}=(1, \dots, 1, 2)^t$. The the following are equivalent.
\begin{enumerate}[label=\emph{(\roman*)},ref=\roman*]
  \item \label{it:12ein}
  The extension $(M,g^D)$ is Einstein.

  \item \label{it:12Keta}
  The Riemannian manifold $(N,g)$ is locally K-contact $\eta$-Einstein with $\ric= -2 g + (n+1) \eta \otimes \eta$, where $\eta$ is the one-form dual to a unit eigenvector $\xi$ of $D$ with the eigenvalue $2$.

  \item \label{it:12KahlerRf}
  The metric $g$ on $N$ is locally given by $g = g' + (dt+\theta')^2$, where $t \in \br$ and $(N',g')$ is an almost K\"{a}hler, Ricci flat manifold and $\theta'$ is a $1$-form on $N'$ such that $d\theta'=\omega$, the K\"{a}hler form on $N'$.
\end{enumerate}
Under any of the above three conditions, the Einstein metric $g^D$ on $M$ is almost K\"{a}hler and is locally given by $g^D=du^2+e^{2u}g'+e^{4u}(dt+\theta')^2$ in the notation of \eqref{it:12KahlerRf}.
\end{theorem}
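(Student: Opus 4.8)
The plan is to treat Theorem~\ref{th:12} as a chain of equivalences anchored on Theorem~\ref{th:Dconst}. Write $\xi$ for the unit eigenvector of $D$ with eigenvalue $2$, let $\eta=\langle\xi,\cdot\rangle$ be its dual one-form and $\mathcal H=\ker\eta$ the eigendistribution for the eigenvalue $1$, so that $D=\id+\eta\otimes\xi$. Since the eigenvalues are $1$ (with multiplicity $n-1$) and $2$, one has $\Tr D=n+1$ and $\Tr(D^2)=n+3$, so $(\Tr D)D-\Tr(D^2)\id=-2\id+(n+1)\,\eta\otimes\xi$, whose associated bilinear form is $-2g+(n+1)\,\eta\otimes\eta$. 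Hence Theorem~\ref{th:Dconst} says that \eqref{it:12ein} holds if and only if (a) $\div D=0$, (b) $(N,g)$ is Ricci $D$-stable, and (c) $\ric=-2g+(n+1)\,\eta\otimes\eta$. Condition (c) is exactly the $\eta$-Einstein equation in \eqref{it:12Keta}, so the content of \eqref{it:12ein}$\Leftrightarrow$\eqref{it:12Keta} is that, given (c), the pair (a)--(b) is equivalent to $(N,g)$ being locally K-contact with Reeb field $\xi$.

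First I would dispose of (a). From $D=\id+\eta\otimes\xi$ a direct computation gives $(\nabla_YD)X=\langle\nabla_Y\xi,X\rangle\,\xi+\eta(X)\,\nabla_Y\xi$, and tracing in $Y$ yields $(\div D)X=\langle\nabla_\xi\xi,X\rangle+(\div\xi)\,\eta(X)$; thus $\div D=0$ is equivalent to $\nabla_\xi\xi=0$ and $\div\xi=0$. Writing $\nabla\xi=S+A$ with $S$ symmetric and $A$ skew, these two conditions force $S\xi=A\xi=0$ and $\Tr S=0$. The equivalence (a)--(b) $\Leftrightarrow$ K-contact then splits into two directions. For \eqref{it:12Keta}$\Rightarrow$\eqref{it:12ein} I would assume $\xi$ Killing (so $S=0$) with $\phi:=-\nabla\xi$ satisfying $\phi^2=-\id+\eta\otimes\xi$, and verify Ricci $D$-stability by computing $\Ric^u$ directly (see the fibration picture below), obtaining $\Ric^u=(n+1)D-(n+3)\id$ independently of $u$, after which Theorem~\ref{th:Dconst} gives the Einstein property. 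The hard direction is \eqref{it:12ein}$\Rightarrow$\eqref{it:12Keta}: here I would feed the $u$-expansion of $\Ric^u$ from Section~\ref{s:ric} into the $D$-stability requirement $\tfrac{d}{du}\Ric^u=0$ and show that, for the spectral type $(1,\dots,1,2)$, this forces the symmetric part $S$ to vanish (so $\xi$ is Killing) and forces $A^2=-\id$ on $\mathcal H$, i.e. $\phi:=-\nabla\xi$ is a $g$-compatible almost complex structure on $\mathcal H$. Extracting these two algebraic facts about $\nabla\xi$ from the vanishing of the variation of the Ricci operator is the main obstacle of the theorem.

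For \eqref{it:12Keta}$\Leftrightarrow$\eqref{it:12KahlerRf} I would use the local fibration generated by $\xi$. Since $\xi$ is a unit Killing field with geodesic orbits ($\nabla_\xi\xi=0$), it defines locally a Riemannian submersion $\pi\colon N\to N'$ with one-dimensional totally geodesic fibres; $\eta$ is a connection one-form, $\phi$ descends to a $g'$-compatible almost complex structure $J$ on the base, and the fundamental form $\omega$ satisfies $\pi^*\omega=d\eta$. As $d\eta$ is closed, $(N',g',J,\omega)$ is almost K\"ahler---this is the classical correspondence between K-contact total spaces and almost K\"ahler bases. The O'Neill formula for a submersion with one-dimensional totally geodesic fibre and horizontal $A$-tensor determined by $d\eta$ gives, for horizontal $X,Y$, $\ric_N(X,Y)=\ric_{N'}(X,Y)-c\,g(X,Y)$ with the constant $c$ fixed by the normalisation $d\theta'=\omega$; comparing this with the horizontal part of (c) forces $\ric_{N'}=0$, i.e. $N'$ is Ricci flat (the vertical value $\ric_N(\xi,\xi)=n-1$ is automatic for K-contact and agrees with (c)). Trivialising the resulting line bundle over a local section introduces a fibre coordinate $t$ with $\eta=dt+\theta'$, $d\theta'=\omega$, giving $g=g'+(dt+\theta')^2$, which is \eqref{it:12KahlerRf}; the converse reconstructs the K-contact $\eta$-Einstein data by reading the same formulas backwards, with $\xi=\partial_t$.

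Finally, for the closing assertion I would substitute $g=g'+(dt+\theta')^2$ (so that $g_{\mathcal H}=g'$ and the eigenvalue-$2$ part is $(dt+\theta')^2$) into the general expression $g^D=du^2+e^{2u}g_{\mathcal H}+e^{4u}\,\eta\otimes\eta$, obtaining $g^D=du^2+e^{2u}g'+e^{4u}(dt+\theta')^2$. To see that $g^D$ is almost K\"ahler I would define an almost complex structure $\mathcal J$ on $M=\br\times N$ that interchanges $\partial_u$ with the $g^D$-unit field in the $\xi$-direction, $e^{-2u}\xi$, and acts as the horizontal lift of $J$ on the $g'$-factor; Hermitian symmetry of $g^D$ with respect to $\mathcal J$ is then immediate from the block structure, and closedness of the associated fundamental two-form is a short verification using $d\theta'=\omega$ and the explicit warping factors $e^{2u},e^{4u}$, rather than a genuine difficulty.
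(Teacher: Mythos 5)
Your overall skeleton is sound and in fact close to the paper's: the reduction via Theorem~\ref{th:Dconst}, the computation showing $\div D=0\Leftrightarrow(\nabla_\xi\xi=0$ and $\div\xi=0)$, and the identification of $(\Tr D)D-\Tr(D^2)\id$ with the $\eta$-Einstein operator $-2\,\id+(n+1)\,\eta\otimes\xi$ are all correct (the paper does the same things, only phrased in an orthonormal eigenframe with structure functions $\mu_{ij|k}$ rather than your invariant splitting $\nabla\xi=S+A$). The problem is that the implication \eqref{it:12ein}$\Rightarrow$\eqref{it:12Keta} --- which you yourself label ``the main obstacle of the theorem'' --- is never argued. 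You assert that the $D$-stability requirement forces $S=0$ and $A^2=-\id$ on $\mathcal H$, but you give no mechanism for extracting either fact from $\tfrac{d}{du}\Ric^u=0$. Since this extraction \emph{is} the mathematical content of the equivalence, a proposal that postulates it has a genuine gap, not merely an omitted routine verification.

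For comparison, here is how the paper closes that gap, and it is short once set up correctly. Working in an eigenframe normalized as in Lemma~\ref{l:multn-1}\eqref{it:multn-1sym} (so that $\mu_{ni|j}=\mu_{nj|i}$ for $i,j<n$, which is exactly the statement that a certain gauge kills part of the skew ambiguity), one expands the entries of $\Ric^u$ via \eqref{eq:Ricuii} and \eqref{eq:Ricuij} as finite sums of exponentials in $u$ and matches coefficients against \eqref{eq:ein1Rudetailes}. The coefficient of $e^{-4u}$ in $(\Ric^u)_n^n$ is $-\sum_{k,l<n}(\mu_{nk|l})^2$, so it must vanish; in the normalized frame this says precisely that the symmetric part of $\nabla\xi$ on $\xi^\perp$ is zero, i.e.\ $\xi$ is Killing. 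Then the constant (in $u$) part of $(\Ric^u)_i^i=-2$ forces $\sum_{k<n}(\mu_{ik|n})^2=4$ for every $i<n$; since the frame at a point is arbitrary, this says $|\nabla_X\xi|=|X|$ for all $X\perp\xi$, which together with skew-symmetry of $\nabla\xi$ gives $(\nabla\xi)^2=-\id$ on $\xi^\perp$. Your proof needs some version of this coefficient-matching argument to exist at all. Note also that your converse direction \eqref{it:12Keta}$\Rightarrow$\eqref{it:12ein} likewise defers the computation of $\Ric^u$ (``obtaining $\Ric^u=(n+1)D-(n+3)\id$ independently of $u$''); that one, unlike the hard direction, can legitimately be outsourced: $g^u$ is the $\mathcal{D}$-homothetic deformation of $g$ with $r=e^{2u}$ (Remark~\ref{rem:deform}), such deformations preserve the K-contact $\eta$-Einstein property, and the constant $a=-2$ is exactly the fixed point of the induced transformation of the $\eta$-Einstein constants, which yields $D$-stability. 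As written, though, both directions of your central equivalence are declarations of intent rather than proofs.
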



We prove Theorem~\ref{th:12} in Section~\ref{ss:12}. Recall that a unit Killing vector field $\xi$ on a Riemannian manifold $(N,g)$ defines a \emph{$K$-contact structure} if $\xi^{\perp}$ is a contact distribution with the contact form $\eta = g\circ \xi$ and the restriction $J = \Phi_{|\xi^{\perp}}$ of the endomorphism $\Phi=-\nabla \xi$ to $\xi^{\perp}$ is an almost CR structure, that is, $J^2 = -\id$. It is called a \emph{Sasakian structure} if $(\xi^{\perp}, J)$ is a CR structure or equivalently, if $(\nabla_X \Phi)= \xi \otimes g \circ X - X \otimes g \circ \xi$, and it is called \emph{$\eta$-Einstein}, if $\ric = ag + b\eta \otimes \eta $ for some constants $a,b \in \br$ \cite{Blair}.

\begin{remark}\label{rem:contact}
Note that in \eqref{it:12Keta}, ``locally" can be replaced by ``globally" if $N$ is orientable (or otherwise we can replace $N$ by its orientable cover). We also note that the equivalence of \eqref{it:12Keta} and \eqref{it:12KahlerRf} is a known fact \cite{Blair, BGbook}, which we included for completeness; the almost K\"{a}hler, Ricci flat manifold in $(N',g')$ in \eqref{it:12KahlerRf} is locally the ``leaf space" of the geodesic foliation on $(N,g)$ defined by $\xi$.

If we additionally assume $N$ to be compact (and orientable), then by the result of \cite[Theorem~7.2]{BG}, $(N,g)$ is Sasakian, and then $(N',g')$ is K\"{a}hler and Ricci flat and hence $(M,g^D)$ is Einstein K\"{a}hler. Moreover, if the geodesic foliation on $N$ defined by $\xi$ is \emph{regular} (this means that every point has a neighbourhood through which every geodesic of that foliation passes at most once), one obtains that $(N,g)$ is a circle bundle over a Calabi-Yau manifold $(N',g')$.

On the other hand, starting with an almost K\"{a}hler non-K\"{a}hler Ricci flat manifold $(N',g')$ (an example is constructed in \cite{NP}) one gets that $(N,g)$ is K-contact, but is not Sasakian.
\end{remark}

\begin{remark}\label{rem:deform}
It is interesting to compare our construction to the standard cone construction in contact geometry. Let $(N,g, \eta)$ be a contact metric structure
(this means that $\eta$ is a contact form such that the associated Reeb vector field $\xi$ is unit and $g^{-1} \circ d \eta_{|\Ker \eta}$ is an almost CR structure on $\Ker \eta$). Then the cone $C = \br_+ \times N$ with the metric $g_C = du^2 + u^2 g, \; u \in \br_+$, is an almost K\"ahler manifold with the K\"ahler form $\omega = d (\frac12 u^2 \eta)= u du \wedge \eta + \frac12 u^2 d \eta$. It is a K\"ahler manifold if and only if $(N,g, \eta)$ is a Sasakian manifold.

Our construction can be viewed as the ``$\mathcal{D}$-deformation cone". For a contact metric structure $(N,g, \eta)$ and a positive number $r > 0$, the $\mathcal{D}$-deformed structure is given by $\tilde{\eta}=r\eta, \, \tilde{\xi}=r^{-1} \xi$ and $\tilde{g} = rg + r(r - 1) \eta \otimes \eta$; see \cite[\S7.3]{Blair}, \cite[\S7.3.3]{BGbook}. A $\mathcal{D}$-deformation preserves the property of being K-contact, Sasakian and $\eta$-Einstein. Our metric $(N,g^u)$ is obtained from $(N,g)$ by the $\mathcal{D}$-deformation with $r=e^{2u}$.
\end{remark}

\smallskip

In Section~\ref{s:dim3}, we consider the case $n=3$, the first case when our construction produces interesting examples. Remarkably, in that case the Einstein condition forces the homogeneity.

\begin{theorem}\label{th:dim3}
Let $(M,g^D)$ be the $D$-extension of the manifold $(N,g)$ of dimension $3$. If $(M,g^D)$ is Einstein, then both $(N,g)$ and $(M,g^D)$ are locally isometric to Lie groups with left-invariant metrics; $N$ is a nilmanifold or a solvmanifold, $D$ is a derivation, and $(M,g^D)$ is an Einstein solvmanifold. All the possible cases, up to scaling, are listed in Table~\ref{t:d3}.
\begin{table}[h] 
\renewcommand{\arraystretch}{1.2}
\begin{tabular}{|c|c|c|c|}
  \hline
  $p_i$ & $\gn$ & $(M, g^D)$ & $ds^2$ \\
  \hline
  $0,0,0$ & \emph{abelian} & $\br^4$ & $du^2+(dx^1)^2+(dx^2)^2+(dx^3)^2$ \\
  \hline
  $1,1,1$ & \emph{abelian} & $H^4(-1)$ & $du^2+e^{2u}((dx^1)^2+(dx^2)^2+(dx^3)^2)$ \\
  \hline
  $1,1,2$ & \parbox[c][1.2cm]{2.5cm}{\emph{Heisenberg,}\\ $[\overline{e}_1,\overline{e}_2]= 2\overline{e}_3$} & $\bc H^2 (-4)$ & \parbox[c][1.2cm]{6.8cm}{$\raggedright du^2+e^{2u}((dx^1)^2+(dx^2)^2)$ \\ \raggedleft $+e^{4u}(dx^3+x^1 dx^2-x^2 dx^1)^2$} \\
  \hline
  $1,p,0$ & \parbox[c][1.7cm]{2.5cm}{\emph{Solvable,} \\ $[\overline{e}_3,\overline{e}_1]= p \overline{e}_1$, \\ $[\overline{e}_3,\overline{e}_2]=- \overline{e}_2$} & \parbox[c][1.5cm]{3cm}{\raggedright $H^2(-(p^2+1))$ \\ \raggedleft $\times H^2(-(p^2+1))$} & $du^2+(dx^3)^2+e^{2 (u- p x^3)}(dx^1)^2+e^{2 (p u+ x^3)}(dx^2)^2$ \\
  \hline
\end{tabular}
\vspace{.2cm}
\caption{Four-dimensional Einstein extensions.}
\label{t:d3}
\end{table}
\end{theorem}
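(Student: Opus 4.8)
The plan is to feed the Einstein hypothesis into Theorem~\ref{th:Dconst}, reduce to a short list of admissible spectral vectors using Lemma~\ref{l:scalar} and Theorem~\ref{th:nonzero}, and then identify $(N,g)$ and $(M,g^D)$ case by case. By Theorem~\ref{th:Dconst}, ``$(M,g^D)$ Einstein'' is equivalent to: $D$ has constant eigenvalues, $\div D=0$, and $\Ric=(\Tr D)D-\Tr(D^2)\id$. In particular $\Ric$ is a polynomial in $D$, so the two operators share a (constant-eigenvalue) eigenframe, and since $\dim N=3$ the whole curvature tensor is algebraically determined by $\Ric$ and therefore has constant components in that eigenframe.

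First I would classify the spectral vector $\mathbf p$. If $D$ is scalar, Lemma~\ref{l:scalar} forces $(N,g)$ to be Ricci flat, hence flat in dimension three, so $N$ is locally $\br^3$; scaling gives $\mathbf p=(0,0,0)$ with $M=\br^4$, and $\mathbf p=(1,1,1)$ with $M=H^4(-1)$ by Example~\ref{ex:id}, the first two rows. If $D$ is non-scalar, I would use Theorem~\ref{th:nonzero}: for $n=3$ the set $F$ has three elements and $f_i+f_j-f_k\in\mathbf p^\perp$ is exactly the relation $p_i+p_j=p_k$. For distinct non-zero $p_i$ at most one such relation can hold (any two of them force a repeated or a vanishing eigenvalue); substituting a single relation into~\eqref{eq:eigenstr} when $\Tr D\neq0$ returns $\mathbf p\propto(1,1,2)$, contradicting distinctness, while if $\Tr D=0$ the relation together with $p_1+p_2+p_3=0$ forces a zero eigenvalue. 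Hence three distinct eigenvalues require $\det D=0$, giving $\mathbf p=(1,p,0)$; and the only non-scalar two-eigenvalue types are $\mathbf p=(1,1,2)$ and the specialisations $\mathbf p=(1,1,0)$, $\mathbf p=(1,0,0)$ of $(1,p,0)$ at $p=1,0$. This yields exactly the four rows of the table.

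Three of the remaining cases follow from the structure theorems already proved, once the low dimension is fed in. For $\mathbf p=(1,1,2)$, Theorem~\ref{th:12} builds $(N,g)$ from an almost K\"{a}hler Ricci flat surface, which in dimension two is flat $\br^2$; then $N$ is the Heisenberg group and $g^D$ is the third-row metric $\bc H^2(-4)$. For $\mathbf p=(1,0,0)\sim(0,0,1)$, Theorem~\ref{th:01} gives $N$ locally $\br\times H^2(-1)$ and $M$ locally $H^2(-1)\times H^2(-1)$; for $\mathbf p=(1,1,0)$, Theorem~\ref{th:10} realises $M$ as a double extension of the flat surface $\br^2$, again a product of hyperbolic planes. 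These are the $p=0$ and $p=1$ instances of the fourth row.

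The generic case $\mathbf p=(1,p,0)$ with three distinct eigenvalues is the heart of the matter. Each eigendistribution is then a line field, so there is a local orthonormal eigenframe $e_1,e_2,e_3$; using the mechanism behind the relations of Lemma~\ref{l:scalar} (a $V(p_k)$-component of the bracket of $V(p_i)$ and $V(p_j)$ can survive only if $p_k=p_i+p_j$) together with $1+p,-1\notin\{1,p,0\}$ for generic $p$, I would conclude $[e_1,e_2]=0$, $[e_3,e_1]\in\br e_1$, $[e_3,e_2]\in\br e_2$. Feeding $\div D=0$ and the constancy of the curvature into the structure equations, I would show the surviving structure functions are constant; then $e_1,e_2,e_3$ exhibit $(N,g)$ as a solvable Lie group with left-invariant metric, $D$ as a derivation, and $(M,g^D)$ as its rank-one extension, an Einstein solvmanifold, with a change of variables identifying it as $H^2(-(p^2+1))\times H^2(-(p^2+1))$. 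The main obstacle is exactly this step: proving that the surviving connection coefficients are genuinely constant, i.e. that $(N,g)$ is locally homogeneous and not merely curvature-homogeneous, and then tying the structure constants to $p$ through the equation $\Ric=(\Tr D)D-\Tr(D^2)\id$ and checking that every real $p$ indeed arises.
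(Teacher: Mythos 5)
Your reduction to the same four eigenvalue types and your handling of the cases $\mathbf p=(0,0,1)$ and $\mathbf p=(1,1,2)$ coincide with the paper's proof (Lemma~\ref{l:scalar}, Remark~\ref{rem:34}, Theorems~\ref{th:01} and~\ref{th:12}, plus flatness of two-dimensional Ricci flat metrics). But the substance of Theorem~\ref{th:dim3} is precisely the step you flag as ``the main obstacle'' and then leave unproved: for $\mathbf p=(1,p,0)$, showing that the Einstein condition forces the structure functions to be constant. This is a genuine gap, not a routine verification --- in dimension three curvature homogeneity does not imply local homogeneity, so ``feeding the constancy of the curvature into the structure equations'' cannot, by itself, produce the conclusion. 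The paper closes this gap with Lemma~\ref{l:pnot1}, whose mechanism is what your outline is missing: expand the diagonal entries $(\Ric^u)^i_i$ as finite sums $\sum_a e^{-2uq_a}r_{ia}+r_{i0}$ and equate coefficients of the distinct exponentials as in \eqref{eq:collect} and \eqref{eq:equate}; after using $\div D=0$ (i.e.\ \eqref{eq:mijj}) to express all the $\mu_{ij|j}$ through the three functions $S_i=\sum_k\mu_{ki|k}$, each index $i$ yields \emph{two} equations of the form $\overline{e}_i(S_i)+cS_i^2=\mathrm{const}$ with \emph{different} coefficients $c$ (for instance $\overline{e}_1(S_1)+S_1^2=0$ and $\overline{e}_1(S_1)+\tfrac{1+(1-p)^2}{p^2}S_1^2=0$ when $p\neq-1,1,2$). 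Subtracting kills the derivative term and forces $S_1=S_2=0$ and $S_3^2=(1-p)^2$, i.e.\ $S_3=1-p$ up to the sign of $\overline{e}_3$; hence every $\mu_{ij|k}$ is an explicit constant, $N$ is a solvable Lie group, and $D$ is a derivation. Moreover, the values $p=2$ and $p=-1$ require separate runs of this argument, because extra brackets ($\mu_{23|1}$, respectively $\mu_{12|3}$) survive Lemma~\ref{l:scalar}\eqref{it:sc3}; for $p=-1$ one additionally needs the identity $d(d\overline{\theta}^3)=0$ to eliminate $\mu_{12|3}$.

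Two further inaccuracies. First, the ``mechanism'' you attribute to Lemma~\ref{l:scalar} --- that a $V(p_k)$-component of $[V(p_i),V(p_j)]$ can survive only if $p_k=p_i+p_j$ --- is stronger than Lemma~\ref{l:scalar}\eqref{it:sc3}, which kills $\mu_{ij|k}$ only when $p_i+p_j-p_k\notin\{0,p_1,\dots,p_n\}$; for example $\mu_{12|1}$ has $p_1+p_2-p_1=p$ in the spectrum, so your claim $[e_1,e_2]=0$ does \emph{not} follow from that lemma, but only from the $S_i$-analysis above. Second, for $\mathbf p=(1,1,0)$ you cannot simply cite Theorem~\ref{th:10}: its conclusion is only a necessary condition (Remark~\ref{rem:10}), exhibiting $N$ as an extension of flat $\br^2$ by a field $D'$ with $\Tr D'=0$, $\Tr(D'^2)=2$, $\div'D'=0$, which a priori may vary from point to point. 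The paper's proof writes $D'=\left(\begin{smallmatrix}a&b\\ b&-a\end{smallmatrix}\right)$ and observes that $\div'D'=0$ is the Cauchy--Riemann system for $a-\mathrm{i}b$, a holomorphic function of constant modulus, hence constant; only then does $D'=\diag(-1,1)$ give the $p=1$ instance of the last row of Table~\ref{t:d3}.
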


In the first column of Table~\ref{t:d3}, we list the eigenvalues $p_i$ of the derivation $D$ of the Lie algebra $\gn$ of $N$, with the corresponding eigenvectors $\overline{e}_i, \, i=1,2,3$. The second column gives the types and the defining relations for $\gn$ (note that the relations for the Lie algebra $\g$ of $M$ are obtained by adding the relations $[e_4,\overline{e}_i]=p_i \overline{e}_i$ to the relations for $\gn$). The third and the fourth columns give the homogeneous spaces to which $(M,g^D)$ is locally isometric and the explicit forms of the metric $(M,g^D)$ in local coordinates respectively, where we denote $H^m(c)$ the hyperbolic space of curvature $c$, and $\bc H^2 (-4)$ the complex hyperbolic space of holomorphic curvature $-4$. 

Note that in the last row of Table~\ref{t:d3}, $p$ is arbitrary. The fact that the metric $g^D$ on $M$ is indeed the Riemannian product of two hyperbolic planes (of curvature $-(p^2+1)$ each) can be seen by the change of variables $y_1=(p^2+1)^{-1/2}(u-p x^3), \; y_2=(p^2+1)^{-1/2}(pu+x^3)$.

\smallskip

In Section~\ref{s:homo}, we turn our attention to the case when $N$ is a Lie group with a left-invariant metric $g$ and $D$ is left-invariant. Denote $\gn$ the Lie algebra of $N$. It would be interesting to know if the condition that $(M,g^D)$ is Einstein forces it to be an Einstein solvmanifold. We answer this question in positive in two cases. 

\begin{theorem}\label{th:group}
Suppose the extension $(M,g^D)$ of a Lie group $(N,g)$ by $D$ is Einstein, and that both $g$ and $D$ are left-invariant. 
\begin{enumerate}[label=\emph{(\alph*)},ref=\alph*]
  \item \label{it:bpos}
  If the Killing form of $\gn$ is nonnegative, then $D$ is a symmetric derivation of $\gn$. 

  \item \label{it:decomphom}
  Let $\gn=\h \oplus \gm$ be a decomposition into an orthogonal sum of an abelian subalgebra $\h$ and a nilpotent ideal $\gm$ and suppose that one of the following conditions is satisfied:
  \begin{enumerate}[label=\emph{(\roman*)},ref=\roman*]
    \item \label{it:decompD}
    either $D$ preserves the decomposition $\gn=\h \oplus \gm$,
    \item \label{it:ni1ab}
    or $\gm$ is abelian.
  \end{enumerate}
  Then there exists a metric solvable Lie group $(N',g')$ and an isometry $\Phi: (N,g) \to (N',g')$ such that $D'=(d\Phi)D$ is left-invariant relative to $N'$ and $D'(e)$ is a symmetric derivation of the Lie algebra $\gn'$ of $N'$.
\end{enumerate}
In both cases, the manifold $(M, g^D)$ is an Einstein solvmanifold. 
\end{theorem}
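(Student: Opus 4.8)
The plan is to translate the Einstein condition into algebra on the metric Lie algebra $(\gn,\ip,[\cdot,\cdot],D)$ and then, in each case, to recognise that $D$ is (or can be made, after an isometric change of the group structure) a symmetric derivation. By Theorem~\ref{th:Dconst}, the hypothesis that $(M,g^D)$ is Einstein is equivalent to the three algebraic conditions $\div D=0$, $\Ric^0=(\Tr D)\,D-\Tr(D^2)\,\id$, and Ricci $D$-stability. Since $g$ and $D$ are left-invariant, $g^u$ is again left-invariant with inner product $\langle e^{uD}\,\cdot,e^{uD}\,\cdot\rangle$; transporting by $e^{uD}$, this is the fixed inner product $\ip$ together with the deformed bracket $[X,Y]_u:=e^{uD}[e^{-uD}X,e^{-uD}Y]$, whose derivative at $u=0$ is $-\Delta_D$, where $\Delta_D(X,Y):=[DX,Y]+[X,DY]-D[X,Y]$ measures the failure of $D$ to be a derivation. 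Thus Ricci $D$-stability says exactly that the Ricci operator of $(\gn,[\cdot,\cdot]_u,\ip)$ is independent of $u$. Finally I record the point of the whole statement: $D$ is a symmetric derivation of $\gn$ precisely when $\g:=\br\,e_{0}\ltimes_D\gn$ (with $[e_0,X]=DX$) is a Lie algebra, in which case $g^D$ is exactly its standard left-invariant metric; so it suffices to produce such a derivation.

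For part~\eqref{it:bpos} I would write the Levi-Civita connection and the Ricci operator of a left-invariant metric by the Milnor--Besse formulas, expressing $\div D$ and the first-order stability condition $\tfrac{d}{du}\Ric^u|_{u=0}=0$ in terms of $\ad$, the mean curvature vector $H$ (defined by $\langle H,X\rangle=\Tr\ad_X$) and the Killing operator $\mathcal{B}$ (defined by $\langle\mathcal{B}X,Y\rangle=\Tr(\ad_X\ad_Y)$). Combining these with the Einstein equation $\Ric^0=(\Tr D)D-\Tr(D^2)\id$, I expect to arrive at a scalar identity of the shape ``(a manifestly nonnegative quantity built from $\Delta_D$) $=$ (a multiple of $\Tr(\mathcal{B}\,A)$ for a symmetric $A$)''; the nonnegativity of the Killing form then forces the right-hand side to have the wrong sign unless both sides vanish, giving $\Delta_D=0$, i.e. $D\in\Der(\gn)$. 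The same hypothesis also excludes a nonzero semisimple Levi factor—on the maximal compact subalgebra of such a factor the Killing form is negative definite—so $\gn$ is solvable; hence $\g=\br\ltimes_D\gn$ is solvable and $g^D$ is an Einstein solvmanifold metric. The main obstacle here is isolating the precise nonnegative combination of $\Delta_D$ whose vanishing is equivalent to the derivation property and pairing it correctly against $\mathcal{B}$, since the sign coming from $B\ge 0$ is the whole engine of the argument; keeping the constants right in the Ricci formula under the non-derivation deformation $[\cdot,\cdot]_u$ is the delicate computational point.

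For part~\eqref{it:decomphom}, $D$ need not be a derivation, and the idea is to gauge away its ``non-derivation part'' by an isometry rather than to kill it. A left-invariant metric is determined, up to local isometry, by its value together with its Levi-Civita connection and curvature at the identity; equivalently it depends only on the symmetric bilinear map $U(X,Y)=\tfrac12(\ad_X^*Y+\ad_Y^*X)$ and on $R$. One may therefore replace $[\cdot,\cdot]$ by a modified bracket sharing the same $U$ and the same curvature, the resulting metric Lie group $(N',g')$ being locally isometric to $(N,g)$ via the induced $\Phi$. I would perform a Gordon--Wilson-type modification adapted to the splitting $\gn=\h\oplus\gm$, altering the action of the abelian part $\h$ on the nilpotent ideal $\gm$ by suitable skew-symmetric derivations of $\gm$, chosen so that the transported operator $D'=(d\Phi)D$ becomes a \emph{symmetric} derivation of the new algebra $\gn'$. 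The two hypotheses are exactly what make this possible: when $D$ preserves $\h\oplus\gm$ the modification can be carried out block-diagonally, whereas when $\gm$ is abelian every skew-symmetric endomorphism of $\gm$ is a derivation, so the correction needed to symmetrise $D'$ can be absorbed without violating the Jacobi identity. Once $D'$ is a symmetric derivation of the (solvable, since $\gm$ is nilpotent and $\gn/\gm\cong\h$ is abelian) algebra $\gn'$, the extension $\br\ltimes_{D'}\gn'$ is solvable and carries the transported $g^D$ as an Einstein left-invariant metric, so again $(M,g^D)$ is an Einstein solvmanifold. The main obstacle in this part is showing simultaneously that the bracket modification is curvature-preserving (hence an isometry) and that it turns $D$ into a symmetric derivation; checking that the skew-symmetric correction used to symmetrise $D'$ is itself a derivation of $\gm$ is precisely where the structural hypotheses on $\h\oplus\gm$ must be used in full.
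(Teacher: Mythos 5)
Your framework (recasting the Einstein condition on the deformed brackets $[X,Y]_u=e^{uD}[e^{-uD}X,e^{-uD}Y]$, and reducing everything to producing a symmetric derivation) matches the paper's, but in both parts the engine of the proof is missing. In part~(\ref{it:bpos}) you propose to combine $\div D=0$, the Einstein equation at $u=0$, and only the \emph{first-order} stability condition $\tfrac{d}{du}\Ric^u|_{u=0}=0$, and you ``expect'' a sign identity pitting a nonnegative quantity built from $\Delta_D$ against a Killing-form term. This cannot work as stated: all curvature quantities of $g^u$ are finite linear combinations of exponentials $e^{-2uq}$ with $u$-independent coefficients $c_q$, and the first derivative at $u=0$ only yields the single relation $\sum_q q\,c_q=0$, in which the exponents $q$ have no preferred sign, so no positivity argument can start. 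What the paper actually uses is the full $u$-constancy: by \eqref{eq:scaluhom}, for each $q\ne 0$ the coefficient of $e^{-2uq}$ in $\scal^u$ is $-\sum_{k\in P_q}\bigl((\Tr\ad_k)^2+\tfrac12 B(\overline{e}_k,\overline{e}_k)\bigr)-\tfrac14\sum_{(k,l,i)\in\mathcal{S}_q}\mu_{kl|i}^2$, and constancy forces it to vanish; since $B\ge 0$ every summand is nonnegative, hence zero, so $\mu_{kl|i}=0$ unless $p_k+p_l=p_i$, which is exactly the derivation property. Note that the Killing term and the squares sit on the \emph{same} side with the same sign, so the identity shape you anticipate never materialises. (Your solvability argument via the compact part of a Levi factor is fine; alternatively one checks $B_\g\ge 0$ and cites \cite[Remark~4.8(a)]{Heb}.)

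In part~(\ref{it:decomphom}) you name the right tool (a Gordon--Wilson standard modification removing a skew-symmetric part of $\ad_{\h}|_{\gm}$), but the proposal has no content where the work lies. The modification by $\phi(\overline{e}_b)=-\mathcal{Q}_b$ yields a Lie algebra and an \emph{isometric} metric group only if the skew parts $Q_b$ are derivations of $\gm$ and the whole family $T_a,Q_b,N_b$ pairwise commutes; these facts are not formal consequences of the hypotheses but are extracted from the Einstein condition by another separation-of-exponentials argument (Lemma~\ref{l:mukldi}) and a trace/commutator argument (Lemma~\ref{l:allcommute}), none of which your plan supplies. (Also, your claim that a left-invariant metric is determined up to local isometry by $U$ and the curvature at the identity is false as stated; the isometry comes from the standard-modification theory of \cite{GW}, \cite{Aneg}.) More seriously, in case~(\ref{it:ni1ab}) you misidentify the difficulty: the issue is not whether skew-symmetric endomorphisms of an abelian $\gm$ are derivations (they trivially are), but that $D$ may a priori fail to preserve the splitting $\h\oplus\gm$ at all, and no correction by endomorphisms of $\gm$ can repair an operator mixing $\h$ with $\gm$. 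The paper's proof of this case is devoted entirely to showing $D\h\subset\h$, which requires the standardness of Einstein solvmanifolds \cite{Lstand}, the normality of the operators $\ad_Y$, $Y\in\h$ \cite[Theorem~4.10(1)]{Heb}, and a mean-curvature computation giving $DH=0$, before reducing to case~(\ref{it:decompD}). As it stands, both halves of your part~(\ref{it:decomphom}) are a plan rather than a proof.
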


\section{Ricci curvature of $D$-deformation and $D$-extension. Proof of Theorem~\ref{th:Dconst}}
\label{s:ric}

\subsection{Ricci curvature of the $D$-extension}
\label{ss:ricDext}

In this subsection, we find the Ricci tensor of the $D$-extension $(M, g^D)$, assuming neither $D$-stability, nor $D$-Einstein property.

Locally, on an open, connected domain $U$ of $(N, g)$ on which the eigenvalues $p_i$ of $D$ have constant multiplicities, we can choose a smooth orthonormal frame $\overline{e}_i, \; i=1, \dots, n$, of eigenvectors of $D$. We extend $\overline{e}_i$ and $D$ to $M':=\br \times U \subset M$ by the Lie translation along the vector field $\partial_u$. Let $\overline{\theta}^i$ be the one-forms dual to $\overline{e}_i$. The metric $g^D$ on $M'$ is given by
\begin{equation}\label{eq:metricM}
ds^2=du^2+\sum\nolimits_{i=1}^n e^{2up_i}(\overline{\theta}^i)^2.
\end{equation}

Let $e_a, \; a=0, \dots , n$, be the orthonormal frame on $(M', g^D)$ given by $e_0=\partial_u, \; e_i = e^{-up_i}\overline{e}_i$ for $i >0$, and let $\theta^a$ be the $1$-forms dual to $e_a$, so that 
\begin{equation*}
\theta^0=du, \qquad \theta^i=e^{up_i}\overline{\theta}^i, \; \text{for }i >0.
\end{equation*}

The structure equations for $(M, g^D)$ are given by
\begin{equation}\label{eq:structure}
d \theta^a = - \sum\nolimits_b \psi^a_b \wedge \theta^b,\; \psi^a_b =- \psi^b_a, \qquad \Om^a_b = d \psi^a_b + \sum\nolimits_c \psi^a_c \wedge \psi^c_b,
\end{equation}
where $\psi_a^b$ and $\Om_a^b$ are the connection and the curvature two-forms respectively. Decomposing them by the basis $\theta^a \wedge \theta^b$ we obtain
\begin{equation*} 
\psi_b^a = \sum\nolimits_c \G_{bc}^a \theta^c, \; \; \G_{bc}^a = \<\n_c e_b, e_a\>, \;\; \G_{ac}^b = - \G_{bc}^a,
\qquad \Om^a_b = - \Om^b_a = \tfrac 12 \sum\nolimits_{c,d} R_{abcd} \theta^c \wedge \theta^d.
\end{equation*}
We use the same notation with the bar for the corresponding objects relative to the metric $g$ on $N$; the convention for the index ranges is $0 \le a,b,c, \dots \le n$ and $1 \le i,j,k, \dots \le n$.

We have $d\theta^0=0, \; d\theta^i=e^{up_i}(p_i \theta^0 + u \sum\nolimits_j e_j(p_i) \theta^j) \wedge \overline{\theta}^i - e^{up_i}\sum\nolimits_j \overline{\psi}^i_j \wedge \overline{\theta}^j$, so
\begin{equation*}
    \sum\nolimits_{b,c} \G^0_{bc} \theta^c \wedge \theta^b=0, \quad
    \sum\nolimits_{b,c} \G^i_{bc} \theta^c \wedge \theta^b=-p_i \theta^0 \wedge \theta^i- u e^{up_i}\sum\nolimits_{j}\overline{e_j}(p_i) \overline{\theta}^j \wedge \overline{\theta}^i  + e^{up_i}\sum\nolimits_{j,k}\overline{\G}^i_{jk} \overline{\theta}^k \wedge \overline{\theta}^j.
\end{equation*}
It follows that
\begin{equation}\label{eq:gij0}
    \G^i_{00}=\G^0_{i0}=0, \quad \G^0_{ii}=p_i, \quad \G^i_{j0}=\G^0_{ij}=0, \quad \text{for } i \ne j,
\end{equation}
and that $\sum\nolimits_{j,k}(e^{u(p_j+p_k)}\G^i_{jk} -e^{up_i}(u \overline{e_j}(p_i) \K_{ik} + \overline{\G}^i_{jk}) )\overline{\theta}^j \wedge \overline{\theta}^k= 0$. By the cyclic permutation of $i,j,k$ we obtain 
\begin{equation}\label{eq:gijk}
\begin{split}
    \G^i_{jk}=& \tfrac12 e^{u(p_k-p_i-p_j)}(\overline{\G}^k_{ji}-\overline{\G}^k_{ij}+u (\K_{ki}\overline{e_j}-\K_{kj}\overline{e_i})(p_k))\\
     -&\tfrac12 e^{u(p_i-p_j-p_k)}(\overline{\G}^i_{kj}-\overline{\G}^i_{jk}+u (\K_{ij}\overline{e_k}-\K_{ik}\overline{e_j})(p_i))\\
     -&\tfrac12 e^{u(p_j-p_k-p_i)}(\overline{\G}^j_{ik}-\overline{\G}^j_{ki}+u (\K_{jk}\overline{e_i}-\K_{ji}\overline{e_k})(p_j)).
\end{split}
\end{equation}
By \eqref{eq:structure} the Ricci tensor $\ric$ of $(M',g^D)$ is given by
\begin{equation}\label{eq:ricci}
    \ric_{ab}=\sum\nolimits_c R_{cacb}=\sum\nolimits_c \big(e_c(\G^c_{ab})-e_b(\G^c_{ac})+ \sum\nolimits_d(\G^c_{dc}\G^d_{ab}-\G^c_{ad}\G^d_{bc})\big). 
\end{equation}
Then by \eqref{eq:gij0}
\begin{equation}\label{eq:ric00}
    \ric_{00}=-\sum\nolimits_i p_i^2=-\Tr (D^2),
\end{equation}
and by (\ref{eq:gij0}, \ref{eq:gijk}),
\begin{equation}\label{eq:ric0i}
\begin{split}
    \ric_{0i}&=e^{-up_i}\Big(\overline{e}_i(\Tr D -p_i)+\sum\nolimits_j\overline{\G}^i_{jj}(p_i-p_j)+u(\overline{e}_i(\tfrac12\Tr (D^2))-p_i\overline{e}_i(\Tr D))\Big)\\
    &=e^{-up_i}\Big(\overline{e}_i(\Tr D)-\overline{\<\overline{e}_i, \sum\nolimits_j(\overline{\n}_{\overline{e}_j}D)\overline{e}_j\>} +u(\overline{e}_i(\tfrac12\Tr(D^2))-p_i\overline{e}_i(\Tr D))\Big).
\end{split}
\end{equation}
Note that the connection forms of the metric $g^u$ on $N$ relative to the frame $e_i$ are the same as those of $(M,g^D)$ and are given by \eqref{eq:gijk}, so from \eqref{eq:ricci} (or from the Gauss equation) we obtain
\begin{equation}\label{eq:ricij}
    \ric_{ij}=\ric^u_{ij}-\K_{ij} p_i \Tr D,
\end{equation}
where $\ric^u$ is the Ricci tensor of $(N,g^u)$. 

\subsection{Proof of Theorem~\ref{th:Dconst}}
\label{ss:pfofDconst}
Suppose $(M, g^D)$ is Einstein. Let $U$ be an open, connected domain of $(N, g)$ on which the eigenvalues $p_i$ of $D$ have constant multiplicities and on which the rank of $D$ is constant. Let $M':=\br \times U \subset M$. Then by \eqref{eq:ric00} the Einstein constant is $-\Tr (D^2)$, so from \eqref{eq:ric0i} and \eqref{eq:ricij} at the points of $(M', g^D)$ we obtain:
\begin{align}
    &p_i \overline{e}_i (\Tr D)=0, \quad \text{for all } i=1, \dots, n, \label{eq:dTr} \\
    &\overline{\div} D = \overline{\grad} (\Tr D), \label{eq:divgrad} \\
    &\Ric^u= (\Tr D) \, D - \Tr (D^2) \, \id, \label{eq:RicuD}
\end{align}
where $\overline{\grad}$ is the gradient relative to $g$.

The right-hand side of \eqref{eq:RicuD} is independent of $u$, while the left-hand side, by \eqref{eq:ricci} and \eqref{eq:gijk}, is the sum of expressions of the form $f_\a e^{ul_\a}, \, f_\a ue^{ul_\a}$, and $f_\a u^2e^{ul_\a}$, where $l_\a$ are linear combinations of the $p_i$'s with integer coefficients, and $f_\a$ are functions on $N$. For any sum $F$ of such expressions, we denote by $[u^2 \exp]F$ the sum of all terms of $F$ of the form $f_\a u^2e^{ul_\a}$. Then \eqref{eq:RicuD} gives $[u^2 \exp]\ric^u_{ij}=0$. To compute $\ric^u_{ij}$ we use \eqref{eq:ricci} replacing $a,b,c,d$ with $i,j,k,l$.
Then from \eqref{eq:gijk} we obtain
\begin{equation*}
\begin{split}
    0&=[u^2 \exp]\ric^u_{ij}=[u^2 \exp](e^{-up_k}\overline{e}_k(\G^k_{ij})-e^{-up_j}\overline{e}_j(\G^k_{ik}) -\G^k_{lj}\G^l_{ik}+\G^k_{lk}\G^l_{ij}+\G^k_{il}\G^l_{kj}-\G^k_{il}\G^l_{jk})\\
    &=u^2 e^{-u(p_i+p_j)}\Bigl(\overline{e}_i(\Tr D)\overline{e}_j(p_i)+\overline{e}_j(\Tr D)\overline{e}_i(p_j)-2\overline{e}_j(p_i)\overline{e}_i(p_j) -\sum\nolimits_k\overline{e}_j(p_k)\overline{e}_i(p_k)\Bigr)\\ &+\K_{ij}u^2\sum\nolimits_k e^{-2up_k}\overline{e}_k(p_i)\overline{e}_k(2p_k- \Tr D).
\end{split}
\end{equation*}
In particular, taking $i=j$ and summing up by $i=1, \dots , n$ we get 
\begin{multline*}
\sum\nolimits_i\bigl(e^{-2up_i}\bigl(2\overline{e}_i(\Tr D)\overline{e}_i(p_i)-2(\overline{e}_i(p_i))^2 -\sum\nolimits_k(\overline{e}_i(p_k))^2\bigr)\bigr) \\ +\sum\nolimits_k e^{-2up_k}\overline{e}_k(\Tr D)\overline{e}_k(2p_k- \Tr D)=0.
\end{multline*}
On the domain $U \subset N$, the rank $r: = \rk D$ is constant, and by relabelling we can assume that $p_{r+1}=\dots=p_{n}=0$ and that all the functions $p_1, \dots, p_r$ are nonzero on $U$. Then from \eqref{eq:dTr} we get $\overline{e}_i(p_i)\overline{e}_i(\Tr D)=0$, for all $i=1, \dots, n$, and so the above equation gives
\begin{equation*}
-\sum\nolimits_i e^{-2up_i}\bigl(2(\overline{e}_i(p_i))^2+\sum\nolimits_k(\overline{e}_i(p_k))^2\bigr)
    -\sum\nolimits_k e^{-2up_k}(\overline{e}_k(\Tr D))^2=0.
\end{equation*}
Thus $\overline{e}_i(p_k)=0$, for all $i,k = 1, \dots, n$, at all the points of $U$. It follows that the coefficients of the characteristic polynomial of $D$ are locally constant on every open, connected domain $U \subset N$ where the eigenvalues of $D$ have constant multiplicities. As the union of such domains is dense in $N$, the eigenvalues of $D$ are constant on the whole manifold $N$. Then (\ref{eq:divgrad}, \ref{eq:RicuD}) imply \eqref{eq:ein1div}, as required.

The converse easily follows from \eqref{eq:ric0i} and \eqref{eq:ricij}.

\subsection{Ricci and scalar curvature of the $D$-deformation with constant eigenvalues}
\label{ss:ricuEin}
In this subsection, we compute the Ricci tensor and the scalar curvature of the $D$-deformation \emph{assuming the eigenvalues of $D$ to be constant}. Note that by Theorem~\ref{th:Dconst}\eqref{it:Dconst1}, this condition must be always satisfied if the $D$-extension is Einstein.

From \eqref{eq:gijk}, the connection components of $g^u$ are given by
\begin{equation}\label{eq:gijkc}
\begin{split}
    \G^i_{jk}&= \tfrac12 e^{u(p_k-p_i-p_j)}(\overline{\G}^k_{ji}-\overline{\G}^k_{ij})
     -\tfrac12 e^{u(p_i-p_j-p_k)}(\overline{\G}^i_{kj}-\overline{\G}^i_{jk})
     -\tfrac12 e^{u(p_j-p_k-p_i)}(\overline{\G}^j_{ik}-\overline{\G}^j_{ki})\\
     &=\tfrac12 \overline{\G}^i_{jk}(e^{u(p_i-p_j-p_k)}+e^{u(p_j-p_k-p_i)})\\
     &+\tfrac12\overline{\G}^k_{ji}(e^{u(p_k-p_i-p_j)}-e^{u(p_j-p_k-p_i)})-\tfrac12 \overline{\G}^k_{ij}(e^{u(p_k-p_i-p_j)}-e^{u(p_i-p_j-p_k)}).
\end{split}
\end{equation}

Introduce the functions $\mu_{ij|k}=\overline{\<[\overline{e}_i,\overline{e}_j], \overline{e}_k\>}$. Then from \eqref{eq:gijkc} we have
\begin{equation}\label{eq:gijklie}
\begin{split}
    \mu_{ij|k}&=\overline{\G}_{ji}^k-\overline{\G}_{ij}^k,\\
    \G^i_{jk}&= \tfrac12 e^{u(p_k-p_i-p_j)}\mu_{ij|k}-\tfrac12 e^{u(p_i-p_j-p_k)}\mu_{jk|i}-\tfrac12 e^{u(p_j-p_k-p_i)}\mu_{ki|j}.
\end{split}
\end{equation}
Substituting into \eqref{eq:ricci} we obtain:
\begin{equation}\label{eq:Ricuij}
\begin{split}
\<\Ric^u e_i, e_j\>_u= &-\tfrac12e^{-u(p_i+p_j)}\bigl(\overline{e}_j(\sum\nolimits_{k}\mu_{ki|k}) + \overline{e}_i(\sum\nolimits_{k}\mu_{kj|k}) +\sum\nolimits_{k,l}\mu_{jk|l}\mu_{il|k}\bigr) \\
&+\tfrac12e^{u(p_i-p_j)}\bigl(\sum\nolimits_{k}e^{-2up_k}\overline{e}_k(\mu_{kj|i})+\sum\nolimits_{k,l}e^{-2up_l}\mu_{lj|i}\mu_{kl|k}\bigr) \\
&+\tfrac12e^{u(p_j-p_i)}\bigl(\sum\nolimits_{k}e^{-2up_k}\overline{e}_k(\mu_{ki|j})+\sum\nolimits_{k,l}e^{-2up_l}\mu_{li|j}\mu_{kl|k}\bigr) \\
&+\tfrac14e^{u(p_i+p_j)}\sum\nolimits_{k,l}e^{-2u(p_l+p_k)}\mu_{kl|i}\mu_{kl|j} \\
&-\tfrac12e^{-u(p_i+p_j)}\sum\nolimits_{k,l}e^{2u(p_l-p_k)}\mu_{ik|l}\mu_{jk|l}.
\end{split}
\end{equation}
In particular, we get
\begin{equation}\label{eq:Ricuii}
\begin{split}
\<\Ric^u e_i, e_i\>_u= &-\tfrac12e^{-2up_i}\bigl(2\overline{e}_i(\sum\nolimits_{k}\mu_{ki|k}) +\sum\nolimits_{k,l}\mu_{ik|l}\mu_{il|k}\bigr) \\
&+\sum\nolimits_{k}e^{-2up_k}\bigl(\overline{e}_k(\mu_{ki|i})+\mu_{ki|i}\sum\nolimits_{l}\mu_{lk|l}\bigr) \\
&+\tfrac14\sum\nolimits_{k,l}e^{2u(p_i-p_l-p_k)}(\mu_{kl|i})^{2} -\tfrac12\sum\nolimits_{k,l}e^{2u(p_l-p_k-p_i)}(\mu_{ik|l})^{2},
\end{split}
\end{equation}
and for the scalar curvature of the $D$-deformation,
\begin{equation} \label{eq:scalu}
\begin{split}
\scal^u =&  \sum\nolimits_{k}e^{-2up_k}\Bigl(2\overline{e}_k\Bigl(\sum\nolimits_{i}\mu_{ki|i}\Bigr)-\Bigl(\sum\nolimits_{i}\mu_{ki|i}\Bigr)^2-\tfrac12 \sum\nolimits_{i,l}\mu_{ki|l}\mu_{kl|i}\Bigr) \\
&-\tfrac14\sum\nolimits_{i,k,l}e^{2u(p_i-p_l-p_k)}(\mu_{kl|i})^{2}.
\end{split}
\end{equation}

\smallskip

If we additionally assume the $D$-extension to be Einstein, then by \eqref{eq:ein1Ru} we obtain
\begin{align}\label{eq:ein1Rudetailes}
  \<\Ric^u e_i, e_j\>_u & = ((\Tr D) p_i - \Tr (D^2)) \K_{ij}, \\
  \scal^u & = (\Tr D)^2  - n \Tr (D^2), \label{eq:scaludetailes}
\end{align}
where the left-hand sides are given by \eqref{eq:Ricuij} and \eqref{eq:scalu} respectively. We also note that \eqref{eq:ein1div} can be explicitly written in the equivalent form
\begin{equation}\label{eq:mijj}
    \sum\nolimits_j\overline{\G}^i_{jj}(p_i-p_j)=\sum\nolimits_j \mu_{ij|j}(p_i-p_j)=0.
\end{equation}

Summarising the above we can express the conditions of Theorem~\ref{th:Dconst} explicitly as follows.

\begin{corollary} \label{cor:Einext}
The extension $(M,g^D)$ is Einstein if and only if $D$ has constant eigenvalues and equations \eqref{eq:ein1Rudetailes} \emph{(}with $\Ric^u$ given by \eqref{eq:Ricuij}\emph{)} and \eqref{eq:mijj} are satisfied.
\end{corollary}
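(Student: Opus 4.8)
The plan is to read this corollary off Theorem~\ref{th:Dconst} by transcribing its conditions \eqref{it:Dconst1} and \eqref{it:Dconst2} into the local orthonormal eigenframe $\overline{e}_i$ of $D$ set up in Section~\ref{ss:ricDext}. The constant-eigenvalue requirement is shared verbatim between the two statements, so on a connected domain $U$ on which the eigenvalues $p_i$ have constant multiplicity I fix such a frame and reduce the claim to two separate equivalences: that $\div D=0$ becomes \eqref{eq:mijj}, and that ``$(N,g)$ is Ricci $D$-stable with $\Ric^u=(\Tr D)\,D-\Tr(D^2)\,\id$'' becomes \eqref{eq:ein1Rudetailes}.

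For the first equivalence I would compute $\div D$ directly in the frame $\overline{e}_i$. Since each $p_i$ is constant, $(\overline{\n}_{\overline{e}_k}D)\overline{e}_i=\sum_l\overline{\G}^l_{ik}(p_i-p_l)\overline{e}_l$, and hence, from the definition $(\div D)X=\Tr(Y\mapsto(\overline{\n}_Y D)X)$,
\[
(\div D)\overline{e}_i=\sum\nolimits_k\overline{\G}^k_{ik}(p_i-p_k).
\]
Applying the skew-symmetry $\overline{\G}^b_{ac}=-\overline{\G}^a_{bc}$ of the connection forms (in particular $\overline{\G}^j_{ji}=0$) together with $\mu_{ij|k}=\overline{\G}^k_{ji}-\overline{\G}^k_{ij}$ from \eqref{eq:gijklie}, this equals $-\sum_j\overline{\G}^i_{jj}(p_i-p_j)=-\sum_j\mu_{ij|j}(p_i-p_j)$. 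Thus $\div D=0$ holds if and only if \eqref{eq:mijj} holds, the overall sign being irrelevant for the vanishing.

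For the second equivalence I would express $\Ric^u$ in the $g^u$-orthonormal eigenframe $e_i=e^{-up_i}\overline{e}_i$. Because each $\overline{e}_i$ is a $D$-eigenvector and $g^u$ merely rescales the eigendistributions, the identities $\<\Ric^u e_i,e_j\>_u=((\Tr D)p_i-\Tr(D^2))\K_{ij}$ for all $i,j$ are equivalent to $\Ric^u\overline{e}_i=((\Tr D)p_i-\Tr(D^2))\overline{e}_i$, that is, to $\Ric^u=(\Tr D)\,D-\Tr(D^2)\,\id$. Crucially, the right-hand side of \eqref{eq:ein1Rudetailes} is independent of $u$, so this single family of identities pins down the eigenvalues of $\Ric^u$ while the eigenvectors $\overline{e}_i$ stay fixed; it therefore already forces $\Ric^u$ to be $u$-independent and so encodes the Ricci $D$-stability of $(N,g)$, which need not be imposed separately. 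The explicit expansion \eqref{eq:Ricuij} of $\<\Ric^u e_i,e_j\>_u$ in terms of the $\mu_{ij|k}$ turns \eqref{eq:ein1Rudetailes} into a concrete system. Combining the two equivalences shows that conditions \eqref{it:Dconst1} and \eqref{it:Dconst2} of Theorem~\ref{th:Dconst} hold precisely when $D$ has constant eigenvalues and \eqref{eq:ein1Rudetailes}, \eqref{eq:mijj} are satisfied, as claimed.

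The only genuinely delicate point I anticipate is the bookkeeping between the fixed frame $\overline{e}_i$ and the moving $g^u$-orthonormal frame $e_i$: one must check that the $u$-dependent rescalings $e^{u(p_j-p_i)}$ relating matrix entries in the two frames cancel correctly, so that $u$-independence of $\<\Ric^u e_i,e_j\>_u$ is genuinely equivalent to $u$-independence of the operator $\Ric^u$ rather than something weaker. Since this reduces to the observation that an eigenbasis of $\Ric^u$ is held fixed while only its eigenvalues could a priori vary with $u$, no real obstacle arises, and the corollary is essentially a faithful restatement of Theorem~\ref{th:Dconst} in explicit local terms.
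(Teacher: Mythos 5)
Your proposal is correct and follows essentially the same route as the paper, which presents the corollary as a direct summary of Theorem~\ref{th:Dconst}: condition~(\ref{it:Dconst1}) becomes \eqref{eq:mijj} via the frame computation of $\div D$ under constant eigenvalues, and condition~(\ref{it:Dconst2}) becomes \eqref{eq:ein1Rudetailes}, whose $u$-independent right-hand side automatically encodes Ricci $D$-stability. Your explicit verification of $\div D=0\Leftrightarrow\eqref{eq:mijj}$ and of the frame bookkeeping merely fills in routine steps the paper leaves implicit.
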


\section{The eigenvalue type of $D$. Proof of Theorem~\ref{th:nonzero}}
\label{s:eigen}

By Theorem~\ref{th:Dconst}, the eigenvalues $p_i$ of $D$ are constant. In this section, we show that there are strong algebraic restrictions on $p_i$.

We have the following lemma (note that assertion~\eqref{it:sc1} is well-known, as in that case $g^D$ is a warped product).
\begin{lemma}\label{l:scalar}
Suppose that $(M,g^D)$ is Einstein. Then
\begin{enumerate}[label=\emph{(\roman*)},ref=\roman*]
  \item \label{it:sc1}
  $D$ is scalar if and only if $N$ is Ricci flat.
  \item \label{it:sc2}
  In all the other cases, there exist $i, j, k$ with $i \ne j$ such that $p_k=p_i+p_j$.
  \item \label{it:sc3}
  If $p_i+p_j-p_k \notin \{0, p_1, \dots, p_n\}$, then $\mu_{ij|k}=0$.
\end{enumerate}
\end{lemma}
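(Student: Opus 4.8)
The plan is to read off all three assertions from the two master identities furnished by Theorem~\ref{th:Dconst}\eqref{it:Dconst2}, namely $\Ric^u=(\Tr D)\,D-\Tr(D^2)\,\id$ and its trace $\scal^u=(\Tr D)^2-n\Tr(D^2)$, after expanding the left-hand sides through the explicit formulas \eqref{eq:Ricuij} and \eqref{eq:scalu}. Since we work under the Einstein hypothesis, the eigenvalues $p_i$ are constant by Theorem~\ref{th:Dconst}\eqref{it:Dconst1}, so those formulas apply and each left-hand side is a finite sum of terms $f_\a(x)\,e^{u l_\a}$ with exponents $l_\a$ integer combinations of the $p_i$. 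As the right-hand sides are independent of $u$, the coefficient function of every exponential with $l_\a\neq 0$ must vanish identically on $N$, while the $l_\a=0$ part reproduces the constant. The decisive structural feature is that in \eqref{eq:scalu} the structure-function-squared terms $(\mu_{ij|k})^2$ enter with a \emph{definite sign}.

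First I would dispose of \eqref{it:sc1}. Evaluating \eqref{eq:ein1Ru} at $u=0$ gives $\Ric=(\Tr D)\,D-\Tr(D^2)\,\id$, the genuine Ricci operator of $(N,g)$. If $D=\la\,\id$ is scalar, the right-hand side is $n\la^2\,\id-n\la^2\,\id=0$, so $N$ is Ricci flat. Conversely, if $\Ric=0$ then $(\Tr D)\,D=\Tr(D^2)\,\id$; if $\Tr D\neq 0$ this exhibits $D$ as a multiple of $\id$, and if $\Tr D=0$ it forces $\Tr(D^2)=\sum_i p_i^2=0$, hence $D=0$. Either way $D$ is scalar.

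For \eqref{it:sc3} I would use constancy of the scalar curvature. In \eqref{eq:scalu} the first sum carries the exponents $-2p_k$, while the term involving $(\mu_{ij|k})^2$ sits at the exponent $2(p_k-p_i-p_j)$ with the nonpositive coefficient $-\tfrac14(\mu_{ij|k})^2$. The hypothesis $p_i+p_j-p_k\notin\{0,p_1,\dots,p_n\}$ says exactly that this exponent is nonzero and coincides with none of the $-2p_m$. Hence, grouping \eqref{eq:scalu} by exponentials and equating the coefficient of $e^{2u(p_k-p_i-p_j)}$ to $0$, only second-sum contributions survive; they form a nonpositive multiple of a sum of squares, which must vanish, and in particular $\mu_{ij|k}=0$.

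Finally, \eqref{it:sc2} is the substantive point, and I would argue by contraposition: assuming there is no triple $i\neq j,k$ with $p_k=p_i+p_j$, I would show $N$ is Ricci flat, whence $D$ is scalar by \eqref{it:sc1}. If $N$ were not Ricci flat then some $\mu_{ij|k}\neq 0$, and by \eqref{it:sc3} its ``defect'' $p_i+p_j-p_k$ lies in $\{0,p_1,\dots,p_n\}$; since the value $0$ is excluded by assumption, every nonzero structure function is \emph{resonant}, with defect equal to a nonzero eigenvalue $p_m$. The obstruction is that such a resonant exponent $2(p_k-p_i-p_j)=-2p_m$ coincides with a first-sum exponent, so the clean sign argument of \eqref{it:sc3} no longer isolates these terms. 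To break the tie I would pass to $q=\max_i|p_i|$, which after possibly replacing $D$ by $-D$ (the isometry $u\mapsto -u$ of $(M,g^D)$ sends $g^D$ to $g^{-D}$) we may take to be the largest eigenvalue, $q>0$; a short application of \eqref{it:sc3} then shows the top eigendistribution is bracket-closed, $[V(q),V(q)]\subseteq V(q)$. Next I would extract the coefficient of $e^{-2qu}$ in the diagonal equation \eqref{eq:Ricuii} summed over the $V(q)$-directions, or equivalently in the identity $\Tr(\Ric^u D)=0$ that follows from \eqref{eq:ein1Rudetailes}, where every $(\mu_{ab|c})^2$-contribution at that exponent carries the same sign; combined with the divergence relation \eqref{eq:mijj} this should force the internal brackets of $V(q)$ to vanish unless $2q$ is itself an eigenvalue, which is impossible since $2q>q$, thereby producing a genuine relation $p_k=p_i+p_j$. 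The main obstacle is precisely this last step: at the resonant exponent $-2q$ the sign-definite $(\mu_{ab|c})^2$ terms compete with the sign-indefinite gradient and cross terms $\overline{e}_k(\cdots)$ and $\mu\cdot\mu$, and controlling the latter — where the identity \eqref{eq:mijj} and the closedness $[V(q),V(q)]\subseteq V(q)$ do the real work — is the heart of the argument.
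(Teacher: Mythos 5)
Your arguments for parts \eqref{it:sc1} and \eqref{it:sc3} are correct and essentially coincide with the paper's proof: \eqref{it:sc1} is read off from \eqref{eq:ein1Ru} at $u=0$ (your case split on whether $\Tr D$ vanishes replaces the paper's Cauchy--Schwarz step and is equally valid), and \eqref{it:sc3} is exactly the paper's argument --- under the hypothesis, the exponent $2(p_k-p_i-p_j)$ differs from $0$ and from every first-sum exponent $-2p_m$ in \eqref{eq:scalu}, so its coefficient is a sum of nonpositive terms that must vanish.

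Part \eqref{it:sc2} is where you have a genuine gap, and it is twofold. First, your proof is simply not finished: the step you yourself call ``the heart of the argument'' --- controlling the sign-indefinite derivative terms $\overline{e}_i\big(\sum_k \mu_{ki|k}\big)$ and cross terms $\mu_{ik|l}\mu_{il|k}$ of \eqref{eq:Ricuii} that pile up at the resonant exponent $-2q$ --- is left as a plan, and nothing you established (the reduction $D\mapsto -D$, the closedness $[V(q),V(q)]\subseteq V(q)$, the identity $\Tr(\Ric^u D)=0$) carries it out. Second, and more importantly, the resonance problem you are trying to solve does not need to be solved, because for \eqref{it:sc2} no individual structure constant has to be killed. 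If no triple $i\ne j$, $p_k=p_i+p_j$ exists, then every $p_i$ is nonzero (otherwise $p_j=p_i+p_j$ with $j\ne i$ would be such a triple), so every exponent $-2p_k$ in the first sum of \eqref{eq:scalu} is nonzero; and every exponent $2(p_i-p_l-p_k)$ with $k\ne l$ in the second sum is nonzero precisely by the no-triple hypothesis (terms with $k=l$ vanish identically). Thus $\scal^u$ is a finite combination of exponentials $e^{\lambda u}$ with all $\lambda\ne 0$; since by \eqref{eq:scaludetailes} it equals the constant $(\Tr D)^2-n\Tr(D^2)$, linear independence of exponentials forces this constant to be zero, \emph{irrespective} of any coincidences or cancellations among the nonzero exponents. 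Cauchy--Schwarz then gives that all the $p_i$ are equal, i.e.\ $D$ is scalar, which is the contrapositive of \eqref{it:sc2}. In short: your plan attacks a hard problem (term-by-term vanishing at resonant exponents, en route to Ricci-flatness of $N$) that the statement does not require; the only fact needed is that the zero exponent occurs nowhere, so the constant value of $\scal^u$ must be $0$.
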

\begin{proof}
\eqref{it:sc1} Suppose that $D$ is scalar. Then from \eqref{eq:ein1Ru} $\Ric^u=0$. In particular, $\Ric^0=0$, as required.

Conversely, if $\Ric^0=0$, then by \eqref{eq:ein1Ru} we have $(\Tr D) \, D =  \Tr (D^2) \, \id$. Taking the traces of both sides we obtain $(\Tr D)^2 = n\Tr (D^2)$, which is only possible when $D$ is scalar, by the Cauchy-Schwarz inequality.

\eqref{it:sc2} Suppose such a triple $i, j, k$ does not exist. In particular, this means that all the $p_i$ are nonzero. Then the only possible way for the scalar curvature $\scal^u$ given by \eqref{eq:scalu} to be constant is when it is identically zero. By \eqref{eq:scaludetailes} this implies that $(\Tr D)^2 =  n\Tr (D^2)$, so that $D$ is scalar.

\eqref{it:sc3} Let $\mathcal{S}$ be the set of all triples $(i,j,k)$, with $i \ne j$, satisfying the assumption of the assertion (note that for $i=j$, the claim is trivial). Then $k \ne i, j$ (as otherwise $p_i+p_j-p_k \in \{p_i,p_j\}$). Then from \eqref{eq:scalu}, $\scal^u =   \sum\nolimits_{k}e^{-2up_k}(\dots) + (\dots) -\tfrac14\sum\nolimits_{(k,l,i) \in \mathcal{S}}e^{2u(p_i-p_l-p_k)}(\mu_{kl|i})^{2}$, where $(\dots)$ are some expressions not involving $u$. As no terms in the last sum have zero exponents or the same exponents as the terms in the first sum (by the definition of $\mathcal{S}$), and as $\scal^u$ is a constant, we get $\sum\nolimits_{(k,l,i) \in \mathcal{S}}e^{2u(p_i-p_l-p_k)}(\mu_{kl|i})^{2}=0$, so $\mu_{kl|i}=0$, for all $(k,l,i) \in \mathcal{S}$.
\end{proof}

\begin{remark}\label{rem:survive}
It follows from Lemma~\ref{l:scalar}\eqref{it:sc3} and from \eqref{eq:Ricuii} and \eqref{eq:ein1Rudetailes} that in the diagonal components $(\Ric^u)^i_i=\<\Ric^u e_i, e_i\>_u$ of the Ricci tensor of $g^u$, the only non-vanishing terms are those with $e^{-2uq}$, where $q \in P:=\{0, p_1, \dots, p_n\}$. Let $P=\{0, q_1, \dots, q_m\}$ (without repetitions). Collecting the similar terms in \eqref{eq:Ricuii}, we obtain
\begin{equation}\label{eq:collect}
    (\Ric^u)^i_i=\sum\nolimits_{a=1}^m e^{-2uq_a} r_{ia} +r_{i0},
\end{equation}
where the expressions $r_{ia}, r_{i0}$ do not depend on $u$ (but only on $(N, g)$), and so for all $i=1, \dots, n$, \eqref{eq:ein1Rudetailes} implies
\begin{equation}\label{eq:equate}
    r_{ia}=0, \;\text{ for all } a=1, \dots, m, \quad \text{and } r_{i0}=\Big(\sum\nolimits_j p_j\Big) p_i - \Big(\sum\nolimits_j p_j^2\Big).
\end{equation}
\end{remark}

\begin{proof}[Proof of Theorem~\ref{th:nonzero}]
\eqref{it:nonzero1} Suppose that all the $p_i$ are nonzero. Then from \eqref{eq:Ricuii} and \eqref{eq:collect}, the terms $r_{i0}$ of the diagonal elements $(\Ric^u)_i^i=\<\Ric^u e_i, e_i\>_u$ of $\Ric^u$ are given by
\begin{equation*}
r_{i0}=\tfrac14\sum\nolimits_{k,l: p_i-p_l-p_k=0}(\mu_{kl|i})^{2} -\tfrac12\sum\nolimits_{k,l: p_l-p_k-p_i=0}(\mu_{ik|l})^{2},
\end{equation*}
Note that, as $p_i \ne0$, all the three subscripts $i,k,l$ in each of the above summations are pairwise non-equal. It follows that
\begin{equation*}
\begin{split}
r_{i0}&=\tfrac14\sum\nolimits_{k,l: f_k+f_l-f_i \in (F \cap {\mathbf{p}}^\perp)}(\mu_{kl|i})^{2} -\tfrac12\sum\nolimits_{k,l: f_i+f_k-f_l \in (F \cap {\mathbf{p}}^\perp)}(\mu_{ik|l})^{2}\\
&=\tfrac12\sum\nolimits_{a: v_a=f_k+f_l-f_i \in (F \cap {\mathbf{p}}^\perp)}(\mu_{kl|i})^{2} -\tfrac12\sum\nolimits_{a: v_a=f_i+f_k-f_l \in (F \cap {\mathbf{p}}^\perp)}(\mu_{ik|l})^{2},
\end{split}
\end{equation*}
where $v_a$ is some labeling of the elements of the set $F \cap {\mathbf{p}}^\perp$. Denote $\mu_a^2=(\mu_{ij|k})^2$ for $v_a=f_i+f_j-f_k \in (F \cap {\mathbf{p}}^\perp)$. Then we obtain
\begin{equation*}
r_{i0}=\tfrac12\sum\nolimits_{a: v_a \in (F \cap {\mathbf{p}}^\perp), \<v_a,f_i\>=-1}\mu_a^2 -\tfrac12\sum\nolimits_{a: v_a \in (F \cap {\mathbf{p}}^\perp), \<v_a,f_i\>=1}\mu_a^2=-\tfrac12\sum\nolimits_{a: v_a \in (F \cap {\mathbf{p}}^\perp)}\<v_a,f_i\>\mu_a^2.
\end{equation*}
By the second equation of \eqref{eq:equate}, the latter expression equals $\<{\mathbf{p}},\mathbf{1}_n\>p_i-\|{\mathbf{p}}\|^2$, so we obtain
\begin{equation} \label{eq:Fpperp}
\sum\nolimits_{a: v_a \in (F \cap {\mathbf{p}}^\perp)} (\tfrac12\mu_a^2) v_a =\|{\mathbf{p}}\|^2\mathbf{1}_n-\<{\mathbf{p}},\mathbf{1}_n\>{\mathbf{p}}.
\end{equation}
The right-hand side is the projection of $\mathbf{1}_n$ to ${\mathbf{p}}^\perp$ multiplied by $\|{\mathbf{p}}\|^2$, and the claim follows.

\eqref{it:nonzero2} From \eqref{eq:Fpperp} we obtain that $Vc=\|{\mathbf{p}}\|^2\mathbf{1}_n-\<{\mathbf{p}},\mathbf{1}_n\>{\mathbf{p}}$, for some vector $c \in \br^m$ (note that its components are not necessarily nonnegative). By construction, $V^t \mathbf{1}_n = \mathbf{1}_m, \; \rk V = m$ and $V^t{\mathbf{p}}=0$ (in particular, $n > m$, as otherwise ${\mathbf{p}}=0$). Then $V^tVc=\|{\mathbf{p}}\|^2\mathbf{1}_m$, so $c=\|{\mathbf{p}}\|^2 (V^tV)^{-1}\mathbf{1}_m$. Therefore $\<{\mathbf{p}},\mathbf{1}_n\>{\mathbf{p}}=\|{\mathbf{p}}\|^2(\mathbf{1}_n-V(V^tV)^{-1}\mathbf{1}_m)$, and equation \eqref{eq:eigenstr} follows, as $\<{\mathbf{p}},\mathbf{1}_n\>= \Tr D$ is assumed to be nonzero.

As all the components of the vector on the right-hand side of \eqref{eq:eigenstr} are rational, all the $p_i$'s, up to scaling, are integer.

The last claim of the assertion follows from the fact that for every $n$, there is a finite number of possible matrices $V$.
\end{proof}

\begin{remark}\label{rem:34}
The conditions imposed by Theorem~\ref{th:nonzero}\eqref{it:nonzero1} on the eigenvalue type of $D$ are quite restrictive (although somewhat implicit). For example, it follows that if $n=3$ and $\det D \ne 0$, then $D= \diag(1,1,2)$ and $D= \id$ are the only possible eigenvalue types, up to scaling. Indeed, there can be no more than one relation of the form $p_i+p_j-p_k=0$, with $i \ne j$, between $p_1,p_2,p_3$. It follows that $F \cap {\mathbf{p}}^\perp$ is either empty (then $D$ is scalar by Lemma~\ref{l:scalar}\eqref{it:sc2}), or consists of a single element $f_1+f_2-f_3$, up to relabelling. In the latter case, the matrix $V$ is $3 \times 1$, $V=(1,1,-1)^t$, and by \eqref{eq:eigenstr}, the vector $\mathbf{p}$ is a multiple of $(1,1,2)^t$. If $n=4$ and $\det D \ne 0$, then considering all the possibilities for the matrix $V$ we obtain that all the eigenvalue types, up to scaling, are
\begin{gather*}
(1,1,1,1)^t,\; (2,2,3,4)^t,\; (3,4,4,7)^t,\; (1,2,3,4)^t,\; (1,1,2,2)^t,\\ (1,1,1,2)^t,\; (1,1,2,3)^t, \; (-1,1,1,2)^t,\; (-1,1,2,3)^t.
\end{gather*}
\end{remark}

\begin{remark}\label{rem:drop}
In general, the condition $\det D \ne 0$ in Theorem~\ref{th:nonzero} cannot be dropped, as shows the analysis of the case $n=3$ in Section~\ref{s:dim3} (for example, in the last row of Table~\ref{t:d3} in Theorem~\ref{th:dim3}, $p$ can be any real number). One might ask however, if the assumption $\Tr D \ne 0$ in Theorem~\ref{th:nonzero}\eqref{it:nonzero2} can be removed. It follows from the proof that the condition $\Tr D = 0$ is equivalent to $V(V^tV)^{-1}\mathbf{1}_m=\mathbf{1}_n$ (which geometrically means that the manifold $(N,g)$ is by itself Einstein). The following example shows the necessity of this assumption, at least at the algebraic level. Let $n=6$ and let $\mathbf{p}=(-3,-2,-1,1,2,3)^t$. Then $F \cap {\mathbf{p}}^\perp = \{v_{142}, v_{153}, v_{231}, v_{243}, v_{264}, v_{354},v_{365},v_{456}\}$, where $v_{ijk}=f_i+f_j-f_k$. The projection of $\mathbf{1}_6$ to ${\mathbf{p}}^\perp$ is $\mathbf{1}_6$ which belongs to the convex cone hull of the set $F \cap {\mathbf{p}}^\perp$ as $2 \cdot \mathbf{1}_6 = 3v_{142}+v_{153}+2v_{231}+v_{243}+2v_{264}+v_{354}+v_{365}+v_{456}$, but \eqref{eq:eigenstr} is not satisfied (taking the inner product of both sides with $\mathbf{p}$ we obtain $\|\mathbf{p}\|^2=0$).
\end{remark}

\section{Einstein $D$-extension of the eigenvalue type $\mathbf{p} = (\lambda, \cdots, \lambda, \nu)^t$}
\label{s:Heis}


\subsection{Three cases}
\label{s:threecases}
In the previous section, we considered the conditions which the Einstein property of $(M,g^D)$ imposes on $D$. Clearly, there are also some conditions on $(N,g)$. For example, the Ricci eigenvalues of $(N,g)$ are constant by \eqref{eq:ein1Ru}, and the scalar curvature is negative, unless $(N,g)$ is Ricci flat, by Lemma~\ref{l:scalar}\eqref{it:sc1}.

In this section, we give a complete characterisation of $(N,g)$ and of $(M,g^D)$ in the case when $D$ has the ``next simplest" eigenvalue type after being a scalar operator, namely when one of the eigenvalues of $D$ has multiplicity $n-1$. As we know from the Introduction, there are only three possibilities, up to scaling; the Ricci operator $\Ric^u$ is given by \eqref{eq:ein1Ru}:
\begin{enumerate}[label=(\alph*),ref={\bf{\alph*}}]
  \item \label{it:01}
  $\mathbf{p}=(0, \dots, 0, 1),\quad \Ric^u = \diag (-1, \dots, -1, 0)$,

  \item \label{it:10}
  $\mathbf{p}=(1, \dots, 1, 0),\quad \Ric^u = \diag (0, \dots, 0, 1-n)$,

  \item  \label{it:12}
  $\mathbf{p}=(1, \dots, 1, 2),\quad \Ric^u = \diag (-2, \dots, -2, n-1)$.
\end{enumerate}
We consider them in Theorem~\ref{th:01}, Theorem~\ref{th:10}, and Theorem~\ref{th:12} respectively. We start with the following lemma valid in all three cases. Denote $S_i=\sum_k \mu_{ki|k}$.

\begin{lemma} \label{l:multn-1}
Let $(M,g^D)$ be the $D$-extension of $(N,g)$ with the spectral vector $\mathbf{p} = (\lambda, \cdots, \lambda, \nu)^t$, $\lambda \ne \nu$. Then
\begin{enumerate}[label=\emph{(\roman*)},ref=\roman*]
  \item \label{it:multn-1Smu}
  $\div D = 0$ if and only if $S_n=0$ and $\mu_{in|n}=0$, for all $i < n$.

  \item \label{it:multn-1sym}
  Locally there exists a frame $\overline{e}_1, \dots, \overline{e}_{n-1}$ for the $\la$-eigendistribution of $D$ such that $\mu_{ni|j}=\mu_{nj|i}$, for all $i,j = 1, \dots, n-1$. Such a frame can be chosen arbitrarily on a hypersurface transversal to $\overline{e}_n$.
\end{enumerate}
\end{lemma}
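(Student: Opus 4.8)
The plan is to treat the two parts separately: part~\eqref{it:multn-1Smu} is a direct substitution into the explicit divergence condition~\eqref{eq:mijj}, while part~\eqref{it:multn-1sym} is a gauge-fixing (ODE) argument along the flow lines of $\overline{e}_n$. For~\eqref{it:multn-1Smu}, I would start from the equivalent form~\eqref{eq:mijj} of $\div D = 0$, namely $\sum_j \mu_{ij|j}(p_i - p_j) = 0$ for every $i$, and substitute $p_1 = \dots = p_{n-1} = \lambda$, $p_n = \nu$. When $i < n$ the only surviving term is $j = n$, giving $(\lambda - \nu)\mu_{in|n} = 0$, hence $\mu_{in|n} = 0$ since $\lambda \ne \nu$. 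When $i = n$ the surviving terms are $j < n$, giving $(\nu - \lambda)\sum_{j<n}\mu_{nj|j} = 0$, hence $\sum_{j<n}\mu_{nj|j} = 0$. Using the antisymmetry $\mu_{kn|k} = -\mu_{nk|k}$ and $\mu_{nn|n} = 0$, this last sum equals $-S_n$, so it is equivalent to $S_n = 0$. Running the implications in both directions gives the stated equivalence; this step is entirely routine.

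For~\eqref{it:multn-1sym}, write $\xi = \overline{e}_n$ (a fixed unit field) and let $\mathcal{V} = \xi^\perp$ be the $\lambda$-eigendistribution. I would split $\mu_{ni|j} = \overline{\langle[\xi,\overline{e}_i],\overline{e}_j\rangle}$ as $\mu_{ni|j} = \omega_{ij} - h_{ij}$, where $\omega_{ij} = \overline{\langle \overline{\nabla}_\xi \overline{e}_i, \overline{e}_j\rangle}$ and $h_{ij} = \overline{\langle \overline{\nabla}_{\overline{e}_i}\xi, \overline{e}_j\rangle}$ for $i,j < n$. The key observations are that $h_{ij}$ is tensorial in $\overline{e}_i,\overline{e}_j$ (a genuine bilinear form on $\mathcal{V}$, the ``shape'' of $\mathcal{V}$ relative to $\xi$), whereas $\omega_{ij}$ is the instantaneous rotation rate of the frame along $\xi$ and is antisymmetric in $i,j$ because the frame is orthonormal. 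Since $\omega$ is already antisymmetric, the antisymmetric part of $\mu_{ni|j}$ in $(i,j)$ equals $\omega_{ij} - h^{\mathrm{skew}}_{ij}$, so the desired symmetry $\mu_{ni|j} = \mu_{nj|i}$ is exactly the condition $\omega = h^{\mathrm{skew}}$.

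I would then realize this as a linear ODE. Fix a reference orthonormal frame $f_1, \dots, f_{n-1}$ of $\mathcal{V}$ obtained by extending an arbitrary orthonormal frame on the transversal hypersurface $\Sigma$ via the induced metric connection on $\mathcal{V}$ along $\xi$, so that $\omega[f] = 0$. Writing the sought frame as $\overline{e}_i = \sum_k O_{ki} f_k$ with $O \in SO(n-1)$, the rotation rate becomes $\omega = \dot{O}^{t} O$ (derivative along the unit flow of $\xi$), while $h^{\mathrm{skew}}[\overline{e}] = O^{t} H O$ with $H := h^{\mathrm{skew}}[f]$ a known smooth antisymmetric-matrix-valued function along each flow line. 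The condition $\omega = h^{\mathrm{skew}}$ then collapses to $\dot{O} = -H O$, a linear first-order ODE along each flow line of $\xi$; prescribing $O|_\Sigma$ (equivalently, the frame on $\Sigma$) arbitrarily, existence and uniqueness for linear ODEs produce a unique solution, and since $-H \in \mathfrak{so}(n-1)$ the solution stays in $SO(n-1)$, keeping the frame orthonormal. This yields the required frame together with the claimed freedom on $\Sigma$.

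The main obstacle is the conceptual step in~\eqref{it:multn-1sym}: recognizing that the antisymmetric part of $\mu_{ni|j}$ splits into a freely prescribable gauge term (the frame's rotation rate along $\xi$) and a tensorial term, so that the symmetry requirement reduces to an honest linear ODE with arbitrary transversal data. Once this decomposition is in place the rest is standard; the computation of $h$ and its skew part, and the verification that $\omega$ is governed by the logarithmic derivative of $O$, are the only points requiring care.
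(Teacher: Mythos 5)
Your proof is correct and takes essentially the same approach as the paper: part (\ref{it:multn-1Smu}) is the same direct substitution into \eqref{eq:mijj}, and part (\ref{it:multn-1sym}) is the same gauge-fixing mechanism --- a linear ODE in the orthogonal group along the integral curves of $\overline{e}_n$, with the frame freely prescribed on a transversal hypersurface, and with orthogonality preserved because the coefficient matrix is skew-symmetric. The only cosmetic difference is that you split $\mu_{ni|j}=\omega_{ij}-h_{ij}$ and pass through an intermediate parallel reference frame before solving $\dot{O}=-HO$, whereas the paper transforms the skew part $K_{ij}=\mu_{ni|j}-\mu_{nj|i}$ directly under an orthogonal change of frame and solves $\overline{e}_n(W)=-\tfrac12 WK$ in one step; composing your two steps recovers exactly the paper's ODE.
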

\begin{proof}
\eqref{it:multn-1Smu} follows from \eqref{eq:mijj}.

\eqref{it:multn-1sym} Let $W = (w_{ij})$ be an $(n-1)\times (n-1)$ orthogonal matrix whose entries are smooth functions on $N$, and define $\overline{e}'_i=\sum_k w_{ik}\overline{e}_k$. Relative to the orthonormal frame $\overline{e}'_1, \dots , \overline{e}'_{n-1},\overline{e}_n$, we have $\mu'_{ni|j} = \<[\overline{e}_n,\overline{e}'_i],\overline{e}'_j\> = \sum_k \overline{e}_n(w_{ik}) w_{jk}+\sum_{k,s} w_{ik} w_{js} \mu_{nk|s}$. Let $K, K'$ be $(n-1)\times (n-1)$ skew-symmetric matrices defined by $K_{ij}=\mu_{ni|j}-\mu_{nj|i}$ and $K'_{ij}=\mu'_{ni|j}-\mu'_{nj|i}$. Then $K'=WKW^t + \overline{e}_n(W)W^t - W\overline{e}_n(W^t)$ which is equivalent to $W^tK'W=K+2W^t\overline{e}_n(W)$. Solving the equation $\overline{e}_n(W)=-\frac12 WK$ along the integral curves of $\overline{e}_n$, with the initial condition $W=\id$ on a hypersurface transversal to $\overline{e}_n$, we get $K'=0$, as required.
\end{proof}

For the rest of this section, we assume the frame $\overline{e}_i$ to be chosen as in Lemma~\ref{l:multn-1}\eqref{it:multn-1sym}.

\subsection{Proof of Theorem~\ref{th:01}} \label{ss:01}
From Lemma~\ref{l:scalar}\eqref{it:sc3} we get $\mu_{ij|n}=0$, for all $i,j < n$. Using this fact and Lemma~\ref{l:multn-1} we find $(\Ric^u)_n^n= -e^{-2u} \sum\nolimits_{k,l < n}(\mu_{nk|l})^2$ from \eqref{eq:Ricuii}. But $(\Ric^u)_n^n = 0$ by \eqref{eq:ein1Ru}, so $\mu_{nk|l}=0$, for all $k,l$. Therefore $\mu_{ij|k}=0$ whenever at least one of the subscripts equals $n$. It follows that the vector field $\overline{e}_n$ is parallel, and so $(N,g)$ is locally isometric to the Riemannian product of the real line and an $(n-1)$-dimensional manifold $N'$. Furthermore, for $i,j<n$ we get $(\Ric^u)_i^j = -\K_{ij}$ by \eqref{eq:ein1Ru}. It follows that $N'$ is Einstein, with the Einstein constant $-1$. Then the metric $g$ on $N$ is given by $\overline{ds^2}=(dx^n)^2 + ds'^2$, where $ds'{}^2$ is an Einstein metric on $N'$, and hence the metric $g^D$ on $M$ is given by $\overline{ds^2}=du^2+e^{2u}(dx^n)^2 + ds'^2$. Thus $(M,g^D)$ is locally isometric to the Riemannian product of the hyperbolic plane of curvature $-1$ and the Einstein manifold $N'$ with the Einstein constant $-1$.

\begin{remark} \label{rem:convth01}
Note that the converse to Theorem~\ref{th:01} is easily verified: starting with the Riemannian product of the real line $\br$ and an Einstein manifold $N'$ of dimension $n-1$ with the Einstein constant $-1$ and extending it by the endomorphisms $D$ with the spectral vector $\mathbf{p}=(0, \dots, 0, 1)^t$ whose kernel is $TN'$ we get an Einstein manifold $(M,g^D)$ isometric to the Riemannian product of the hyperbolic plane of curvature $-1$ and $N'$.
\end{remark}

\subsection{Case~\eqref{it:10}} \label{ss:10}
In the case $\mathbf{p}=(1, \dots,1, 0)^t$, we have the following theorem.

\begin{theorem}\label{th:10}
Let $(M,g^D)$ be the $D$-extension of $(N,g)$ with the spectral vector $\mathbf{p}=(1, \dots, 1, 0)^t$. If $(M,g^D)$ is Einstein, then the $1$-eigendistribution of $D$ is integrable and the manifold $(N,g)$ is locally an extension of a Ricci flat manifold $(N',g')$ of dimension $n-1$ by a field of symmetric endomorphism $D'$ with constant eigenvalues, such that $\Tr D' = 0$, $\Tr D'^2=n-1$, and $\div' D'=0$. The Einstein metric $g^D$ on $M$ is locally given by $ds^2=du^2+dt^2+\sum_{i=1}^{n-1} e^{2u+2tq_i} (\theta'^i)^2$, where $q_i$ are the eigenvalues of $D'$ and $\{\theta'^i\}$ is the coframe dual to an orthonormal frame of eigenvectors of $D'$ on $N'$. 
\end{theorem}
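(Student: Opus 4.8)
The plan is to analyze Theorem~\ref{th:10} by extracting the vanishing conditions forced by the Einstein property on the structure constants $\mu_{ij|k}$, and then interpreting the surviving data as an iterated extension. We are in case~\eqref{it:10}, so $\mathbf{p}=(1,\dots,1,0)^t$ and $\Ric^u=\diag(0,\dots,0,1-n)$, i.e.\ the $\nu=0$ eigendirection $\overline{e}_n$ carries Ricci eigenvalue $1-n$ and the $\la=1$ eigendirections carry Ricci eigenvalue $0$. By Lemma~\ref{l:multn-1}\eqref{it:multn-1Smu} the divergence condition \eqref{eq:ein1div} gives $S_n=\sum_k \mu_{kn|k}=0$ and $\mu_{in|n}=0$ for $i<n$, and we adopt the symmetric frame of Lemma~\ref{l:multn-1}\eqref{it:multn-1sym} so that $\mu_{ni|j}=\mu_{nj|i}$ for $i,j<n$. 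The set $P=\{0,p_1,\dots,p_n\}=\{0,1\}$, so Lemma~\ref{l:scalar}\eqref{it:sc3} forces $\mu_{ij|k}=0$ whenever $p_i+p_j-p_k\notin\{0,1\}$; with three indices in $\{1,2\}$-type relations this is what lets us kill most cross terms.

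First I would compute $(\Ric^u)_n^n$ from \eqref{eq:Ricuii}, collect the $u$-dependence using Remark~\ref{rem:survive}, and equate the coefficient of $e^{-2u}$ (the $q=1$ part) to zero. I expect this to yield $\mu_{ij|n}=0$ for $i,j<n$ together with the symmetric part of $\mu_{ni|j}$ being forced, so that the $1$-eigendistribution closes up: precisely, $\mu_{ij|n}=0$ for all $i,j<n$ is exactly the integrability of the $1$-eigendistribution $V(1)$. Next I would examine the off-diagonal equations $(\Ric^u)_i^n=0$ for $i<n$ from \eqref{eq:Ricuij}, which should pin down the remaining components involving $\overline{e}_n$, and the diagonal equations $(\Ric^u)_i^i=0$ for $i<n$, whose $u$-independent part $r_{i0}=0$ and whose $e^{-2u}$-part each contribute relations among the $\mu_{ij|k}$ with $i,j,k<n$.

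The structural conclusion then comes from reading off what survives. Once $V(1)$ is integrable and $\overline{e}_n$ integrates to a coordinate $t$, the leaves carry an induced geometry; setting $q_i$ to be the eigenvalues of the induced endomorphism $D'=-\nabla_{\overline{e}_n}$ (acting on $V(1)$ via the structure constants $\mu_{ni|j}$, which by the symmetric-frame choice we may take diagonal, $\mu_{ni|j}=q_i\K_{ij}$), the metric $g$ on $N$ takes the warped form $dt^2+\sum_i e^{2tq_i}(\theta'^i)^2$, i.e.\ $(N,g)$ is itself a $D'$-extension of the leaf space $(N',g')$. Feeding this back into \eqref{eq:metricM} gives $ds^2=du^2+dt^2+\sum_{i=1}^{n-1}e^{2u+2tq_i}(\theta'^i)^2$ as claimed. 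The conditions on $D'$ follow by translating \eqref{eq:ein1Rudetailes} and \eqref{eq:mijj} down to $N'$: $\Tr D'=0$ because $\Tr D=n-1$ is absorbed into the $u$-factor while the $\nu=0$ direction contributes nothing to the trace along the leaves; $\Tr D'^2=n-1$ from matching $\Tr(D^2)=n-1$; $\div' D'=0$ from the restriction of \eqref{eq:mijj}; and $(N',g')$ Ricci flat because the Ricci eigenvalues $0$ on $V(1)$, after removing the contribution of the $u$- and $t$-warpings, leave the intrinsic Ricci tensor of the leaf metric equal to zero.

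\textbf{The main obstacle} I anticipate is the bookkeeping in separating the two exponential scales $e^{2u}$ and $e^{2tq_i}$ simultaneously: here $t$ is an \emph{internal} coordinate on $N$ (the $\overline{e}_n$-direction), not the extension variable $u$, so the functions $\mu_{ni|j}$ are a priori functions on $N$ and I must show they are in fact constant eigenvalues of a well-defined field $D'$ on $N'$, and that the $t$-dependence organises into genuine warping factors $e^{2tq_i}$ rather than arbitrary functions. I expect this to follow from differentiating the already-established relations along $\overline{e}_n$ (using that the symmetric frame diagonalises the action of $\ad \overline{e}_n$ on $V(1)$) and invoking the constancy of the $p_i$ from Theorem~\ref{th:Dconst}, but verifying that $\overline{e}_n$ acts on $V(1)$ with constant eigenvalues — rather than merely symmetrically — is the crux, and is precisely what makes $(N,g)$ a second $D$-extension rather than a general fibration.
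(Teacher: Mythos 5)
Your plan reproduces the paper's route for most of the argument, and those parts are sound: Lemma~\ref{l:multn-1}\eqref{it:multn-1Smu} plus Lemma~\ref{l:scalar}\eqref{it:sc3} give $\mu_{in|n}=0$ and $\mu_{ij|n}=0$ (so $\overline{e}_n$ is geodesic and the $1$-eigendistribution is integrable), the $u$-independent part of $(\Ric^u)_i^j=0$ for $i,j<n$ gives $\overline{e}_n(\mu_{ni|j})=0$, the symmetric frame of Lemma~\ref{l:multn-1}\eqref{it:multn-1sym} can be chosen to diagonalise $(\mu_{ni|j})$ on a transversal hypersurface (it equals the second fundamental form there) and diagonality then propagates, and $(\Ric^u)_n^n=1-n$ yields $\Tr D'^2=n-1$. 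Two of your attributions are off, though repairable: $\div' D'=0$ does not follow from ``the restriction of \eqref{eq:mijj}'' --- that equation, for the outer extension, only reproduces $S_n=0$ and $\mu_{in|n}=0$; in the paper it comes from the mixed components $(\Ric^0)_n^i=0$ fed into \eqref{eq:ric0i} applied to the \emph{inner} extension. Likewise the clean reason for $\Tr D'=0$ is simply $\Tr D'=\sum_i\mu_{ni|i}=-S_n=0$, not an ``absorption'' argument.

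The genuine gap sits exactly where you flag the crux: the constancy of the eigenvalues $q_i$ of $D'$. Differentiating the established relations along $\overline{e}_n$ can only ever give $\overline{e}_n(q_i)=0$, i.e.\ $t$-independence, whereas the theorem asserts the $q_i$ are constant on $N'$ as well; nothing in your plan produces $\overline{e}_j(q_i)=0$ for $j<n$. ``Invoking the constancy of the $p_i$ from Theorem~\ref{th:Dconst}'' does not help: the $p_i$ are the eigenvalues of $D$ (already the constants $1,\dots,1,0$), and Theorem~\ref{th:Dconst} cannot be applied to the inner extension of $(N',g')$ by $D'$, because that extension is $(N,g)$ itself, which is \emph{not} Einstein (its Ricci operator is $\diag(0,\dots,0,1-n)$). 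The paper closes this gap with a reordering trick: the two extensions commute, so $(M,g^D)$ is also the extension of $\tilde N=(\br\times N',\, du^2+e^{2u}g')$ --- an Einstein manifold with Einstein constant $-1$ --- by the endomorphism $\tilde D$ which equals $D'$ on $TN'$, vanishes on $\partial_u$, and is Lie parallel along $\partial_u$. Since this outer extension \emph{is} Einstein, Theorem~\ref{th:Dconst}\eqref{it:Dconst1} applies to $\tilde D$ and forces its eigenvalues, hence the $q_i$, to be constant. Without this step (or an equivalent re-run of the $u^2e^{ul}$-coefficient analysis from the proof of Theorem~\ref{th:Dconst}, now in the $t$-variable), your argument establishes the metric splitting but not the constancy claim in the statement.
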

\begin{proof}
From Lemma~\ref{l:multn-1}\eqref{it:multn-1Smu} and Lemma~\ref{l:scalar}\eqref{it:sc3} we obtain that $S_n=0$ and that $\mu_{ij|n}=0$, for all $i,j$. It follows that the vector field $\overline{e}_n$ is geodesic and that its orthogonal distribution is integrable. Therefore we can define a function $x^n$ locally on $N$ in such a way that $\overline{\theta}^n=dx^n$. Without loss of generality, assume that $N'$ is the level hypersurface of $N$ defined by the equation $x^n=0$. Denote $g'$ the induced metric on $N'$.

From \eqref{eq:ein1Rudetailes} we get $(\Ric^u)_i^j=0$ for $i,j < n$. Then by \eqref{eq:Ricuij}, the sum of the terms of $(\Ric^u)_i^j$ which do not depend on $u$ gives $\overline{e}_n(\mu_{ni|j})=0$, for all $i,j < n$. We can now specify the frame $\overline{e}_i, \; i<n$, further. Note that $\mu_{ni|j}= \<[\overline{e}_n, \overline{e}_i],\overline{e}_j\> = \<\overline{\nabla}_{\overline{e}_n} \overline{e}_i, \overline{e}_j\> - \<\overline{\nabla}_{\overline{e}_i}\overline{e}_n, \overline{e}_j\> = \<\overline{\nabla}_{\overline{e}_n} \overline{e}_i, \overline{e}_j\> + \<\overline{\nabla}_{\overline{e}_i}\overline{e}_j, \overline{e}_n\>$. On the hypersurface $N'$, we have $\<\overline{\nabla}_{\overline{e}_i} \overline{e}_j, \overline{e}_n\>=h(\overline{e}_i, \overline{e}_j)$, where $h$ is the second fundamental form of $N'$. As $\mu_{ni|j}=\mu_{nj|i}$ we obtain that on $N'$, the expression $\<\overline{\nabla}_{\overline{e}_n} \overline{e}_i, \overline{e}_j\>$ is symmetric relative to $i,j$, hence $\<\overline{\nabla}_{\overline{e}_n} \overline{e}_i, \overline{e}_j\>=0$, and so $\mu_{ni|j}=h(\overline{e}_i, \overline{e}_j)$ on $N'$. We can now choose the frame $\overline{e}_i, \; i<n$, on $N'$ consisting of orthonormal eigenvectors of the second fundamental form $h$ and then extend it to $N$ as in Lemma~\ref{l:multn-1}\eqref{it:multn-1sym}. Then the symmetric matrix $(\mu_{ni|j})$ restricted to $N'$ is diagonal, and from the fact that  $\overline{e}_n(\mu_{ni|j})=0$, for all $i,j < n$, we obtain that $(\mu_{ni|j})$ is diagonal locally on $N$, so that $\mu_{ni|j}=q_i \K_{ij}$ and $\overline{e}_n(q_i)=0$. Then the vector fields $e'_i=e^{-x^nq_i} \overline{e}_i, \; i<n$, are Lie parallel along the integral curves of $\overline{e}_n=\partial_n$, as also are their dual one-forms ${\theta'}^i= e^{x^nq_i} \overline{\theta}^i$. It follows that the metric $g$ on $N$ is locally given by $\overline{ds^2}=\sum\nolimits_{i=1}^n (\overline{\theta}^i)^2 = (dx^n)^2+\sum\nolimits_{i<n} e^{2x^nq_i}({\theta'}^i)^2$. Thus the metric $(M,g^D)$ has the required form and moreover, by \eqref{eq:metricM}, $(N,g)$ is locally an extension of $(N',g')$ by the field of the symmetric endomorphism $D'$ defined by $D'\overline{e}_i=q_i \overline{e}_i$ at the points of $N'$.

We also have $\Tr D' = \sum_i \mu_{ni|i}=-S_n=0$ by Lemma~\ref{l:multn-1}\eqref{it:multn-1Smu}. Furthermore, from \eqref{eq:ein1Rudetailes} we get $(\Ric^u)_n^n=1-n$, which by \eqref{eq:Ricuii} gives $\Tr D'^2 = \sum_i \mu_{ni|i}^2 = n-1$. Moreover, as $(\Ric^u)_i^j=0$ for $i,j < n$, we have $(\Ric^0)_i^j=0$ (note that $g^0=g$), and so \eqref{eq:ricij} applied to the extension of $(N',g')$ by $D'$ implies that $(N',g')$ is Ricci flat. We also have $(\Ric^0)_n^i=0$ for $i < n$, so by \eqref{eq:ric0i} applied to the extension of $(N',g')$ by $D'$ we obtain $\div' D'=0$.

The metric $g^D$ is locally obtained as the result of two consecutive extensions from the metric $g'$ on $N'$, first by $D'$ and then by $D$. Note that these two extensions ``commute", so we can first extend the Ricci flat metric $(N',g')$ by the identity endomorphism to obtain an Einstein manifold $\tilde N$ with the Einstein constant $-1$ (compare to Lemma~\ref{l:scalar}\eqref{it:sc1}), and then extend again by the endomorphism $\tilde D$ which coincides with $D'$ on $TN'$, is zero on $\partial_u$, and is Lie parallel along $\partial_u$, to obtain the Einstein metric $(M, g^D)$. By Theorem~\ref{th:Dconst}, the eigenvalues of $\tilde D$ are constant, and so the eigenvalues of $D'$ are also constant, as claimed.
\end{proof}

\begin{remark} \label{rem:10}
  Note that the condition for $(M,g^D)$ to be Einstein given in Theorem~\ref{th:10} is only necessary. To make it sufficient one has to additionally require that all the $D'$-deformations of the metric $(N',g')$ are Ricci flat, not only the metric $(N',g')$ itself. In general, it may be too difficult to classify Ricci flat deformations of a Ricci flat manifold, even under the additional assumption that $\Tr D' = 0$, $\Tr D'^2=n-1$, and $\div' D'=0$. One simple example is the Riemannian product of Ricci flat manifolds, with the operator $D'$ acting by scaling on each factor (compare to Example~\ref{ex:product}). When $\dim N' = 2$, this is the only possible case (see the end of the proof of Theorem~\ref{th:dim3} in the next section).
\end{remark}

\subsection{Proof of Theorem~\ref{th:12}} \label{ss:12} 
\eqref{it:12ein} $\Leftrightarrow$ \eqref{it:12Keta}. By Corollary~\ref{cor:Einext}, \eqref{it:12ein} is equivalent to the fact that equations \eqref{eq:mijj} are satisfied and that $\Ric^u$ given by \eqref{eq:Ricuij} satisfies \eqref{eq:ein1Rudetailes}. By Lemma~\ref{l:multn-1}, \eqref{eq:mijj} is equivalent to the fact that $S_n=0$ and $\mu_{in|n}=0$, for all $i < n$. Furthermore, we can assume that the frame $\overline{e}_1, \dots, \overline{e}_{n-1}$ is chosen in such a way that $\mu_{ni|j}=\mu_{nj|i}$ (and there is still the freedom of choosing it (locally) arbitrarily on a hypersurface transversal to $\overline{e}_n$).

From \eqref{eq:Ricuii} and \eqref{eq:ein1Rudetailes} by Lemma~\ref{l:multn-1} we get
\begin{equation*}
(\Ric^u)_n^n= -e^{-4u}\sum\nolimits_{k,l < n}(\mu_{nk|l})^2 +\tfrac14\sum\nolimits_{k,l<n}(\mu_{kl|n})^{2}=n-1,
\end{equation*}
which is equivalent to the fact that
\begin{equation*}
  (\Ric^0)_n^n = n-1 \quad \text{and} \quad \mu_{nk|l}=0,
\end{equation*}
for all $k, l < n$. Then from \eqref{eq:structure} and \eqref{eq:gijklie} we obtain $d\overline{\theta}^i=-\frac12 \sum_{j,k<n} \mu_{jk|i} \overline{\theta}^j \wedge \overline{\theta}^k$. It follows that $d(\sum_{j,k<n} \mu_{jk|i} \overline{\theta}^j \wedge \overline{\theta}^k)=0$, so $\overline{e}_n(\mu_{jk|i})=0$, for all $j,k < n$, and in particular, $\overline{e}_n(S_i)=0$.

Now computing the components $(\Ric^u)_i^i, \; (\Ric^u)_i^n, \; i <n$, by \eqref{eq:Ricuij} and \eqref{eq:Ricuii} and using \eqref{eq:ein1Rudetailes} we get
\begin{gather*}
(\Ric^u)_i^i = e^{-2u} ((\Ric^0)_i^i + \tfrac12 \sum\nolimits_{k<n}(\mu_{ik|n})^{2}) - \tfrac12 \sum\nolimits_{k<n}(\mu_{ik|n})^{2} = -2,
\\
(\Ric^u)_i^n = e^{-u} (\Ric^0)_i^n = 0,
\end{gather*}
which is equivalent to the fact that for all $i < n$,
\begin{equation*}
  (\Ric^0)_i^i = -2, \; (\Ric^0)_i^n = 0, \quad \text{and} \quad \sum\nolimits_{k<n}(\mu_{ik|n})^{2}=4.
\end{equation*}

Now taking $\xi=\overline{e}_n$ we find that $\xi$ is geodesic if and only if $\mu_{in|n}=0$. Furthermore, choosing an orthonormal frame in the distribution $\xi^\perp$ as in Lemma~\ref{l:multn-1}\eqref{it:multn-1sym} (so that $\mu_{ni|j}=\mu_{nj|i}$ for $i,j <n$) and defining $J=-\nabla \xi$ we obtain $\<J\overline{e}_i, \overline{e}_j\> = \mu_{ni|j}+\frac12\mu_{ij|n}$ for $i, j < n$, and so $\xi$ is Killing if and only if $\mu_{ni|j}=0$. Then the condition that the contact structure defined by $\xi$ is K-contact is equivalent to the fact that $\sum\nolimits_{k<n}(\mu_{ik|n})^{2}=4$, for all $i < n$. Finally, the condition that $(N,g)$ is $\eta$-Einstein, with $\ric = -2 g + (n+1) \eta \otimes \eta$, where $\eta=\overline{\theta}^n$, is equivalent to the fact that $(\Ric^0)_i^i = -2, \; (\Ric^0)_i^n = 0$ and $(\Ric^0)_n^n = n-1$, as the orthonormal basis $\overline{e}_i$ at a point can be chosen arbitrarily.

\smallskip

\eqref{it:12Keta} $\Leftrightarrow$ \eqref{it:12KahlerRf}. The foliation defined by $\xi$ on $(N,g)$ is geodesic. Locally take $N'=N/\xi$ and define the metric $g'$ on $N'$ in such a way that the natural projection is a submersion (this is possible as $\xi$ is Killing). Then the restriction of $J$ to $N'$ defines an almost K\"{a}hler structure. The fact that $(N',g')$ is Ricci flat follows from \cite[Equation~7.3]{BG} or by a direct calculation. The implication \eqref{it:12KahlerRf} $\Leftrightarrow$ \eqref{it:12Keta} is proved by reversing the construction.

\smallskip

Finally, it is easy to see that $(M,g^D)$ is almost K\"{a}hler, with the fundamental 2-form $e^{2u}(2du \wedge (dt+\theta') + \omega)$, in the notation of \eqref{it:12Keta}.

\section{Four-dimensional Einstein extensions}
\label{s:dim3}

In this section we consider the case $n=3$, the lowest dimension when our construction provides interesting examples. Note that in the case $n =2$, there are only two independent connection components, $\overline{\G}^1_{21}$ and $\overline{\G}^1_{22}$, and \eqref{eq:mijj} implies that $\overline{\G}^1_{21}(p_1-p_2) = \overline{\G}^1_{22}(p_1-p_2) = 0$, so either $(N,g)$ is flat, or $D$ is scalar, which again implies that $(N,g)$ is flat by Lemma~\ref{l:scalar}\eqref{it:sc1}. Then the manifold $(M,g^D)$ is hyperbolic.

\begin{proof}[Proof of Theorem~\ref{th:dim3}]
We consider all the possible eigenvalue types of $D$.

In the case when $D$ is scalar, the manifold $(N, g)$ is flat by Lemma~\ref{l:scalar}\eqref{it:sc1}. We can locally introduce Cartesian coordinates $x^1, x^2,x^3$ on $N$ and set $\overline{e}_i=\partial_i$. Then $N$ is abelian, $D$ is left-invariant, and its value at the identity of $N$ is obviously a derivation of the abelian Lie algebra $\gn$ of $N$. We get the first two rows of Table~\ref{t:d3}, up to scaling.

Next suppose that two out of three eigenvalues $p_i$ are zeros. Up to scaling, we can assume that $p_1=p_2=0, \; p_3=1$. By Theorem~\ref{th:01} we can choose local coordinates on $N$ in such a way that $\overline{ds^2}=(dx^3)^2 + (dx^1)^2 + e^{2x^1}(dx^2)^2$, and the orthonormal frame of eigenvectors of $D$ is $\overline{e}_1=\partial_1, \, \overline{e}_3=\partial_3$, and $\overline{e}_2= e^{-x^1} \partial_2$. Then $N$ is locally a solvable Lie group, with the only nontrivial relation in $\gn$ being $[\overline{e}_1,\overline{e}_2]=-\overline{e}_2$. Moreover, $D$ is left-invariant and is a derivation of $\gn$. Up to relabelling we obtain the case in the last row of Table~\ref{t:d3}, with $p=0$.

Suppose that $D$ is non-scalar and nonsingular. Up to scaling, we get $D=\diag(1,1,2)$ by Remark~\ref{rem:34}. Then by Theorem~\ref{th:12}, we can choose local coordinates on $N$ in such a way that $\overline{ds^2}=ds'^2+(dx^3+\theta')^2$, where $ds'^2=(dx^1)^2+(dx^2)^2$ is a two-dimensional flat metric and $\theta'=x^1 dx^2-x^2 dx^1$. Then $\overline{ds^2}=(dx^1)^2+(dx^2)^2+(dx^3+x^1 dx^2-x^2 dx^1)^2$. An orthonormal frame of eigenvectors of $D$ can be chosen as $\overline{e}_1=\db_1+x^2\db_3, \, \overline{e}_2= \partial_2-x^1 \partial_3$, and $\overline{e}_3=-\partial_3$. Then $N$ is locally the Heisenberg Lie group, with the only nontrivial relation in $\gn$ being $[\overline{e}_1,\overline{e}_2]=2\overline{e}_3$. The endomorphism field $D$ is left-invariant and its value at the identity is a derivation of $\gn$. We obtain the case in the third row of Table~\ref{t:d3}. The fact that the extension $(M,g^D)$ is locally isometric to $\bc H^2$ is well-known (see e.g., \cite[Section~6.5]{Heb}).

Suppose that one of the eigenvalues of $D$ is zero and the other two are nonzero. Up to relabeling and scaling, we can take $p_1=1, \; p_3=0, \; p_2 =p$, where $|p| \ge 1$.

We have the following lemma.
{
\begin{lemma}\label{l:pnot1}
If $p \ne 1$, then the only nonzero $\mu_{ij|k}, \; i<j$, are $\mu_{23|2}= 1$ and $\mu_{13|1}=- p$.
\end{lemma}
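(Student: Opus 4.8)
The plan is to work in dimension $n=3$ with spectral vector $\mathbf{p}=(1,p,0)^t$, $|p|\ge 1$, $p\ne 1$, and to systematically eliminate all but two of the structure constants $\mu_{ij|k}$ using the constancy-of-curvature conditions that the Einstein property provides. First I would apply Lemma~\ref{l:scalar}\eqref{it:sc3}: for each ordered triple with $i\ne j$, if $p_i+p_j-p_k\notin\{0,p_1,p_2,p_3\}=\{0,1,p\}$ then $\mu_{ij|k}=0$. With three distinct eigenvalues $1,p,0$ and the hypothesis $p\ne 1$ (and recalling $|p|\ge 1$, so $p\ne 0$), I would tabulate the possible values of $p_i+p_j-p_k$ as $i,j,k$ range over $\{1,2,3\}$ and check which ones land in the admissible set $\{0,1,p\}$. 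The expectation is that only the combinations producing $\mu_{13|1}$ (exponent data $p_1+p_3-p_1=0$) and $\mu_{23|2}$ ($p_2+p_3-p_2=0$), together with their index-symmetric partners forced by $\mu_{ij|k}=-\mu_{ji|k}$, survive this first sieve; everything else is killed outright. Here genuine case-splitting on whether $p=2$, $p=-1$, or other coincidences among $p\pm 1$ may occur will be needed, since those special values can make extra exponents admissible.

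Next I would invoke the divergence condition. By Lemma~\ref{l:multn-1}\eqref{it:multn-1Smu} (applied with the eigenvalue of multiplicity considerations adapted to this three-eigenvalue setting) together with the explicit form \eqref{eq:mijj}, namely $\sum_j \mu_{ij|j}(p_i-p_j)=0$, I would extract linear relations forcing certain of the remaining $\mu_{ij|k}$ to vanish or to be determined. In particular \eqref{eq:mijj} with the distinct eigenvalues $1,p,0$ gives three equations whose coefficients $p_i-p_j$ are nonzero precisely because the eigenvalues are pairwise distinct, and I expect these to pin down the surviving structure constants up to the two free quantities $\mu_{13|1}$ and $\mu_{23|2}$.

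To fix the actual \emph{values} $\mu_{23|2}=1$ and $\mu_{13|1}=-p$, I would then use the diagonal Einstein equations \eqref{eq:ein1Rudetailes} via the explicit Ricci formula \eqref{eq:Ricuii}, matching the coefficients of the distinct exponentials $e^{-2uq}$, $q\in\{0,1,p\}$, as in Remark~\ref{rem:survive} and \eqref{eq:equate}. After all off-diagonal and spurious terms are eliminated, the components $(\Ric^u)_i^i$ reduce to expressions purely in $\mu_{13|1}^2$ and $\mu_{23|2}^2$ (and possibly their $\overline{e}_k$-derivatives), which \eqref{eq:ein1Rudetailes} forces to equal $(\Tr D)p_i-\Tr(D^2)=(1+p)p_i-(1+p^2)$. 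Evaluating this for $i=1,2,3$ yields a small system whose solution gives $\mu_{23|2}=\pm 1$ and $\mu_{13|1}=\mp p$ (the sign being fixable by choice of frame orientation).

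The main obstacle I anticipate is the first elimination step: carefully checking the admissibility of $p_i+p_j-p_k$ for \emph{all} ordered triples while correctly handling the arithmetic coincidences that arise for special values of $p$ (e.g.\ $p=2$ giving $p_1+p_1-p_3=2=p$, or $p=-1$), since these are exactly the degeneracies that could allow additional nonzero $\mu_{ij|k}$ and thereby break the clean conclusion. I would need to argue either that such coincidences are excluded by $|p|\ge 1$, $p\ne 1$ together with the Einstein constraints, or dispose of them by a separate direct computation showing the extra terms still vanish. Once the structure constants are reduced to the claimed two, verifying the precise numerical values via \eqref{eq:Ricuii} and \eqref{eq:equate} is routine linear algebra and should present no difficulty.
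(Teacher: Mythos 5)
Your overall strategy coincides with the paper's: first the sieve of Lemma~\ref{l:scalar}\eqref{it:sc3}, then the divergence condition \eqref{eq:mijj}, then matching coefficients of the distinct exponentials in the diagonal Ricci components via \eqref{eq:collect} and \eqref{eq:equate}. Two inaccuracies in your generic-case picture, though only the second matters. First, the sieve never kills any constant of the form $\mu_{ij|i}$, because its exponent $p_i+p_j-p_i=p_j$ is always admissible; so for generic $p$ six constants survive it ($\mu_{12|1},\mu_{12|2},\mu_{13|1},\mu_{13|3},\mu_{23|2},\mu_{23|3}$), not just two. Second, \eqref{eq:mijj} gives only three linear relations, so it reduces these six constants to the three parameters $S_1,S_2,S_3$ (e.g.\ $\mu_{21|2}=\tfrac1p S_1$, $\mu_{13|1}=\tfrac{p}{p-1}S_3$, $\mu_{23|2}=\tfrac1{1-p}S_3$), and it is the Ricci-coefficient matching that then forces $S_1=S_2=0$ and $S_3^2=(1-p)^2$. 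This redistribution of labour is harmless: your three tools do close the case $p\ne -1,2$, exactly as in the paper.

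The genuine gap is the case $p=-1$. There the extra constant $\mu_{12|3}$ survives the sieve (since $p_1+p_2-p_3=0$), and the toolkit you propose --- sieve, divergence, diagonal Ricci matching --- is provably insufficient to kill it. Carrying those equations out, as the paper does, yields only $S_1=S_2=0$ together with $S_3^2+3\mu_{12|3}^2=4$ and $\overline{e}_3(S_3)=2\mu_{12|3}^2$, a system that admits pointwise solutions with $\mu_{12|3}\ne 0$. The paper closes it with an additional idea absent from your plan: differentiating the structure equation $d\overline{\theta}^3=-\mu_{12|3}\,\overline{\theta}^1\wedge\overline{\theta}^2$ (i.e.\ using $d^2=0$, an integrability condition not contained in \eqref{eq:Ricuii}, \eqref{eq:ein1Rudetailes} or \eqref{eq:mijj}) gives $\overline{e}_3(\mu_{12|3})=-\mu_{12|3}S_3$; then differentiating the algebraic constraint $S_3^2+3\mu_{12|3}^2=4$ along $\overline{e}_3$ and substituting $\overline{e}_3(S_3)=2\mu_{12|3}^2$ forces $S_3\mu_{12|3}^2=0$, hence $\mu_{12|3}=0$ and $S_3=2$. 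You flagged $p=2$ and $p=-1$ as the main obstacle but deferred them to an unspecified ``separate direct computation''; for $p=2$ your equations do suffice (subtraction gives $\tfrac12 S_1^2+\tfrac32\mu_{23|1}^2=0$, so $S_1=\mu_{23|1}=0$), but for $p=-1$ no pointwise computation from the equations you list can work, so this step is missing rather than merely postponed.
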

\begin{proof}
As in Section~\ref{s:Heis}, we denote $S_i=\sum_k \mu_{ki|k}$. Consider three cases.

Let $p \ne -1, 1, 2$. Then for pairwise non-equal $i,j,k$, we have $p_i+p_j-p_k \notin \{0,1,p\}$, so $\mu_{ij|k}=0$, by  Lemma~\ref{l:scalar}\eqref{it:sc3}.
From \eqref{eq:mijj} we obtain that $\mu_{21|2}=\frac{1}{p}S_1, \; \mu_{31|3}=\frac{p-1}{p}S_1, \; \mu_{12|1}=pS_2, \; \mu_{32|3}=(1-p)S_2, \; \mu_{13|1}= \frac{p}{p-1}S_3, \; \mu_{23|2}=\frac{1}{1-p}S_3$. Collecting the similar terms of \eqref{eq:Ricuii} as in \eqref{eq:collect}, we obtain by \eqref{eq:equate}:
\begin{gather*}
\overline{e}_1(S_1)+S_1^2=\overline{e}_1(S_1)+\tfrac{1+(1-p)^2}{p^2} S_1^2=0, \quad \overline{e}_2(S_2)+S_2^2=\overline{e}_2(S_2)+(p^2+(1-p)^2)S_2^2=0, \\ \overline{e}_3(S_3)+S_3^2=(1-p)^2,\quad \overline{e}_3(S_3)+\tfrac{1+p^2}{(1-p)^2}S_3^2=p^2+1,
\end{gather*}
and so $S_1=S_2=0$ and $S_3=1-p$ (up to changing the sign of $\overline{e}_3$), and the claim follows.

Now suppose $p=2$. Then by Lemma~\ref{l:scalar}\eqref{it:sc3}, we have $\mu_{12|3}=\mu_{31|2}=0$. From \eqref{eq:mijj} we obtain that $\mu_{21|2}=\mu_{31|3}=\frac12 S_1, \; \mu_{12|1}=2S_2$, $\mu_{32|3}=-S_2, \; \mu_{13|1}=2S_3, \; \mu_{23|2}=-S_3$.
From \eqref{eq:Ricuii} and \eqref{eq:ein1Ru} we get:
\begin{gather*}
\overline{e}_1(S_1)+S_1^2+\mu_{23|1}^2=\overline{e}_1(S_1)+\tfrac12 S_1^2-\tfrac12\mu_{23|1}^2=0, \quad \overline{e}_2(S_2)+S_2^2=\overline{e}_2(S_2)+5S_2^2=0, \\ \overline{e}_3(S_3)+S_3^2=1,\quad \overline{e}_3(S_3)+5S_3^2=5,
\end{gather*}
which then implies that $S_1=S_2=\mu_{23|1}=0$ and $S_3=-1$ (up to changing the sign of $\overline{e}_3$), and the claim follows.

The last case is $p=-1$. By Lemma~\ref{l:scalar}\eqref{it:sc3}, we get $\mu_{23|1}=\mu_{31|2}=0$. From \eqref{eq:mijj} we obtain $\mu_{21|2}=-S_1, \; \mu_{31|3}=2S_1, \; \mu_{12|1}=-S_2, \; \mu_{32|3}=2S_2, \; \mu_{13|1}=\mu_{23|2}=\frac12 S_3$. Then equations \eqref{eq:Ricuii} and \eqref{eq:ein1Ru} give:
\begin{gather*}
\overline{e}_1(S_1)+S_1^2=\overline{e}_1(S_1)+5 S_1^2=0, \quad \overline{e}_2(S_2)+S_2^2=\overline{e}_2(S_2)+5S_2^2=0, \\ \overline{e}_3(S_3)+S_3^2+\mu_{12|3}^2=4,\quad 2\overline{e}_3(S_3)+S_3^2-\mu_{12|3}^2=4,
\end{gather*}
which implies that $S_1=S_2=0$ and $S_3^2+3\mu_{12|3}^2=4, \; \overline{e}_3(S_3)=2\mu_{12|3}^2$. So all the $\mu_{ij|k}, i < j$, except possibly $\mu_{13|1}=\mu_{23|2}=\frac12 S_3$ and $\mu_{12|3}$, vanish. Then $d\overline{\theta}^1 = -\frac12 S_3 \overline{\theta}^1 \wedge \overline{\theta}^3, \; d\overline{\theta}^2 =-\frac12 S_3 \overline{\theta}^2 \wedge \overline{\theta}^3, \; d\overline{\theta}^3=-\mu_{12|3}\overline{\theta}^1 \wedge \overline{\theta}^2$. Differentiating the last equation we get $0=(\overline{e}_3(\mu_{12|3})+\mu_{12|3}S_3)\overline{\theta}^1 \wedge \overline{\theta}^2 \wedge \overline{\theta}^3$, and so $\overline{e}_3(\mu_{12|3})=-\mu_{12|3}S_3$. But then differentiating the equation $S_3^2+3\mu_{12|3}^2=4$ along $\overline{e}_3$ and using the fact that $\overline{e}_3(S_3)=2\mu_{12|3}^2$ we obtain $S_3\mu_{12|3}^2=0$. It follows that $\mu_{12|3}=0, \; S_3=2$ (up to changing the sign of $\overline{e}_3$), and $\mu_{23|2}=\mu_{13|1}=1$.
\end{proof}
}

We return to the proof of the theorem. Suppose $p \ne 1$. Then it follows from Lemma~\ref{l:pnot1} that $d\overline{\theta}^1 = p \overline{\theta}^1 \wedge \overline{\theta}^3, \; d\overline{\theta}^2 =- \overline{\theta}^2 \wedge \overline{\theta}^3, \; d\overline{\theta}^3=0$, so we can choose local coordinates on $N$ such that $\overline{\theta}^3=dx^3, \overline{\theta}^1=e^{-p x^3}dx^1, \; \overline{\theta}^2=e^{x^3}dx^2$. Then the metric $g$ is locally given by $\overline{ds^2}=e^{-2p x^3}(dx^1)^2+e^{2x^3}(dx^2)^2+(dx^3)^2$. Furthermore, the eigenvectors $\overline{e}_i$ of $D$ satisfy the relations $[\overline{e}_3,\overline{e}_1]= p \overline{e}_1, \; [\overline{e}_3,\overline{e}_2]=- \overline{e}_2, \; [\overline{e}_1,\overline{e}_2]=0$. Hence $N$ is (locally) a solvable, non-nilpotent Lie group, and $D$ is left-invariant and is a derivation of $\gn$. The Einstein metric $g^D$ on $M$ is locally given by $ds^2=e^{2(u-p x^3)}(dx^1)^2+e^{2(pu+x^3)}(dx^2)^2 +(dx^3)^2 +du^2$, which is the Riemannian product of two hyperbolic planes of curvature $-(p^2+1)$, which can be seen by the change of variables $y_1=(p^2+1)^{-1/2}(u-px^3), \; y_2=(p^2+1)^{-1/2}(pu+  x^3)$.

The only remaining case to consider is $p=1$, so that the eigenvalues of $D$ are $p_1=p_2=1$, $p_3=0$. By Theorem~\ref{th:10}, $(N,g)$ is the extension of a flat two-dimensional manifold $(N',g')$ by a symmetric endomorphism $D'$ such that $\Tr D'=0, \Tr D'^2=2$, and $\div' D'=0$. Choosing local Cartesian coordinates $x^1, x^2$ on $N'$, we obtain $D'=\left(\begin{smallmatrix}a&b\\b&-a\end{smallmatrix}\right)$, for some functions $a$ and $b$ on $N'$, with $a^2+b^2=2$. The condition $\div' D'=0$ gives $\partial_{x^1} a + \partial_{x^2} b =\partial_{x^1} b - \partial_{x^2} a= 0$, so $a - \mathrm{i}b$ is a holomorphic function with a constant module. It follows that both $a$ and $b$ are constants, so choosing $\partial_{x^1}, \partial_{x^2}$ to be the unit eigenvectors of $D'$ we obtain $D'=\diag(-1,1)$. Then the metric $g$ on $N$ is locally given by $ds^2=(dx^3)^2+e^{-2x^3} (dx^1)^2+e^{2x^3} (dx^2)^2$. Choosing the unit eigenvectors of $D$ as $\overline{e}_1=e^{x^3}\partial_1, \; \overline{e}_2=e^{-x^3}\partial_2, \; \overline{e}_3=\partial_3$ we obtain $[\overline{e}_3,\overline{e}_1]= \overline{e}_1, \; [\overline{e}_3,\overline{e}_2]=- \overline{e}_2$, and $[\overline{e}_1,\overline{e}_2]=0$, so $N$ is a Lie group defined by the corresponding Lie algebra $\gn$ and $D$ is a derivation of $\gn$. By Theorem~\ref{th:10}, the Einstein metric $g^D$ on $M$ is given by $ds^2=du^2+(dx^3)^2+e^{2(u-x^3)} (dx^1)^2+e^{2(u+x^3)} (dx^2)^2$, as in the last row of Table~\ref{t:d3}, with $p=1$.
\end{proof}

\section{Extensions of a Lie group. Proof of Theorem~\ref{th:group}} 
\label{s:homo}

Suppose that $N$ is a Lie group, and both the metric $g$ and the endomorphism field $D$ are left-invariant. We will mostly work on the level of Lie algebras. We can take the vector fields $\overline{e}_i$ left-invariant. Then $\mu_{ij|k}=\overline{\<[\overline{e}_i,\overline{e}_j], \overline{e}_k\>}$  are constants and are the structure constants of the Lie algebra $\gn$ of $N$, and we have $\sum\nolimits_{k,l}\mu_{jk|l}\mu_{il|k} = \sum\nolimits_{k,l}\<\ad_j\overline{e}_k, \overline{e}_l\> \<\ad_i\overline{e}_l, \overline{e}_k\>= B(\overline{e}_i,\overline{e}_j)$, where $B$ is the Killing form of $\gn$. Moreover, $S_l=\sum_k\mu_{kl|k}=-\Tr \ad_l$ (where we abbreviate $\ad_{\overline{e}_i}$ to $\ad_i$). 

Then equation \eqref{eq:mijj} (which is equivalent to \eqref{eq:ein1div}) takes the form
\begin{equation}\label{eq:mijjhom}
\Tr(\ad_{DX}-\ad_X D)=0, \quad \text{for all }X \in \gn,
\end{equation}
and equations \eqref{eq:Ricuij} and \eqref{eq:ein1Rudetailes} give
\begin{equation}\label{eq:Ricuijhom}
\begin{split}
(\Ric^u)_i^j= &-\tfrac12e^{-u(p_i+p_j)}B(\overline{e}_i,\overline{e}_j) -\tfrac12 \sum\nolimits_{l}(e^{u(p_i-p_j-2p_l)}\mu_{lj|i}+ e^{u(p_j-p_i-2p_l)}\mu_{li|j})\Tr \ad_l \\
&+\tfrac14e^{u(p_i+p_j)}\sum\nolimits_{k,l}e^{-2u(p_l+p_k)}\mu_{kl|i}\mu_{kl|j} -\tfrac12e^{-u(p_i+p_j)}\sum\nolimits_{k,l}e^{2u(p_l-p_k)}\mu_{ik|l}\mu_{jk|l}\\
= &\; ((\Tr D) p_i-(\Tr D^2))\K_{ij}.
\end{split}
\end{equation}
From \eqref{eq:Ricuijhom} (or from \eqref{eq:scaludetailes}) we also obtain
\begin{equation}\label{eq:scaluhom}
\begin{split}
\scal^u & = -\sum\nolimits_{k}e^{-2up_k}((\Tr\ad_k)^2+\tfrac12B(\overline{e}_k,\overline{e}_k)) -\tfrac14\sum\nolimits_{i,k,l}e^{2u(p_i-p_l-p_k)}(\mu_{kl|i})^{2}\\ & = (\Tr D)^2 - n(\Tr D^2).
\end{split}
\end{equation}
One can rewrite equation \eqref{eq:Ricuijhom} in a different form using the action of the group $\mathrm{GL}(\gn)$ on the Lie bracket of $\gn$ as in \cite{Lsurv, Lstand,N} (for a similar approach, with $\mathrm{GL}(\gn)$ acting on the inner product, see \cite[Section~3]{Heb}). For the metric Lie algebra $\gn$, denote the Lie bracket by $\mu(X,Y):=[X,Y]$, and for $A \in \mathrm{GL}(\gn)$, define the new Lie bracket on the underlying Euclidean space $(\br^{n},\ipb)$ of $\gn$, keeping the inner product fixed, by $A.\mu(X,Y)= A\mu(A^{-1}X,A^{-1}Y)$. The resulting metric Lie algebra is isomorphic (but not, in general, isometric) to $(\gn, \ipb)$. In our case, taking $A=e^{uD}$ we obtain the metric Lie algebra $(\gn(u), \ipb)$ with the Lie bracket $e^{uD}.\mu(X,Y)=e^{uD}[e^{-uD}X,e^{-uD}Y]$ whose structure constants $\mu^u_{ij|k}$ are given by $\mu^u_{ij|k}=e^{u(p_k-p_i-p_j)}\mu_{ij|k}$, so equation \eqref{eq:Ricuijhom} takes the form $\Ric^{\gn(u)}=(\Tr D) D -(\Tr D^2)\id$, where $\Ric^{\gn(u)}$ is the Ricci operator of $(\gn(u), \ipb)$. Using \cite[sec~7.38]{Bes} (or \eqref{eq:Ricuijhom}) we obtain that for the Lie bracket $e^{uD}.\mu$, equation \eqref{eq:Ricuijhom} is equivalent to
\begin{equation}\label{eq:Ricgroup}
\begin{split}
    \overline{\<\Ric^{\gn(u)}X ,X\>} \mkern-7mu & \mkern7mu =-\tfrac12 B(e^{-uD}X,e^{-uD}X)-\overline{\<(e^{uD}\ad_{e^{-uD}H}e^{-uD})X,X\>}\\
    &+ \tfrac14 \sum\nolimits_{k,l}\overline{\<[e^{-uD}E_k, e^{-uD}E_l], e^{uD}X\>}^2 -\tfrac12 \Tr(\ad_{e^{-uD}X}^*e^{2uD} \! \ad_{e^{-uD}X}e^{-2uD})\\
    &= (\Tr D) \overline{\<DX,X\>}-(\Tr D^2)\overline{\|X\|}^2,
\end{split}
\end{equation}
for all $X \in \gn$, where $H \in \gn$, \emph{the mean curvature vector} of the unimodular ideal, is defined by $\overline{\<H,X\>}=\Tr\ad_X$, and $\{E_k\}$ is an arbitrary orthonormal basis for $(\gn, \ipb)$ (not necessarily a basis of eigenvectors of $D$).

From Theorem~\ref{th:Dconst} we obtain the following.

\begin{corollary} \label{cor:homo}
Suppose $(N,g)$ is a Lie group and both $g$ and $D$ are left-invariant. The extension $(M,g^D)$ is Einstein if and only if equations \eqref{eq:Ricgroup} \emph{(}or equivalently \eqref{eq:Ricuijhom}\emph{)} and \eqref{eq:mijjhom} are satisfied.
\end{corollary}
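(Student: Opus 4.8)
The plan is to read Corollary~\ref{cor:homo} as the homogeneous specialization of Corollary~\ref{cor:Einext} (equivalently, of Theorem~\ref{th:Dconst}), so that almost all of the work is already contained in the computations preceding the statement; what remains is to check that, under left-invariance of $g$ and $D$, each of the three conditions of the general criterion simplifies to one of the asserted equations.

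First I would dispose of the constant-eigenvalue requirement. Since $D$ is left-invariant, its matrix in the left-invariant eigenframe $\overline{e}_i$ is constant, so its eigenvalues $p_i$ are automatically constant on all of $N$; thus condition~\eqref{it:Dconst1} of Theorem~\ref{th:Dconst} reduces to the single divergence equation \eqref{eq:ein1div}. This is where homogeneity removes the analytic content of Theorem~\ref{th:Dconst}. Because the structure constants $\mu_{ij|k}$ are now genuine constants, every derivative term $\overline{e}_k(\mu_{\cdot})$ in \eqref{eq:Ricuij} and \eqref{eq:mijj} vanishes, and the surviving expressions regroup in terms of the Killing form $B$, the adjoint operators $\ad_i$, and the traces $\Tr\ad_l$. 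This is exactly the reduction carried out above: \eqref{eq:mijj} becomes \eqref{eq:mijjhom}, and \eqref{eq:ein1Rudetailes} with $\Ric^u$ given by \eqref{eq:Ricuij} becomes \eqref{eq:Ricuijhom}. By Corollary~\ref{cor:Einext}, therefore, $(M,g^D)$ is Einstein if and only if \eqref{eq:Ricuijhom} and \eqref{eq:mijjhom} both hold, which is one of the two equivalent forms asserted.

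It remains to record the equivalence of \eqref{eq:Ricuijhom} and \eqref{eq:Ricgroup}. Here the key observation is that the $D$-deformed metric Lie algebra $(\gn(u),\ipb)$ is obtained from $\gn$ by the $\mathrm{GL}(\gn)$-action $A.\mu$ with $A=e^{uD}$, so that $\Ric^u$ equals the Ricci operator of $(\gn(u),\ipb)$; feeding the bracket $e^{uD}.\mu$ into the standard formula for the Ricci curvature of a left-invariant metric \cite[sec~7.38]{Bes} yields \eqref{eq:Ricgroup}. I do not expect a genuine obstacle here: the only nontrivial point is the bookkeeping that matches the mean-curvature term $\ad_H$, the Killing-form term, and the two bracket sums in \eqref{eq:Ricgroup} against their counterparts in \eqref{eq:Ricuijhom}, and this is routine once $e^{uD}.\mu$ is identified as the correct deformed bracket. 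In short, the corollary is a faithful Lie-algebraic repackaging of Theorem~\ref{th:Dconst}, and the main task is simply to confirm that the homogeneous rewriting already performed in the text is correct.
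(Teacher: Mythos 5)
Your proposal is correct and takes essentially the same approach as the paper: there the corollary is presented as an immediate consequence of Theorem~\ref{th:Dconst} (via Corollary~\ref{cor:Einext}), with left-invariance making the eigenvalues and structure constants $\mu_{ij|k}$ constant, so that \eqref{eq:mijj} becomes \eqref{eq:mijjhom} and \eqref{eq:ein1Rudetailes} with \eqref{eq:Ricuij} becomes \eqref{eq:Ricuijhom}, and the equivalence with \eqref{eq:Ricgroup} obtained by identifying $\Ric^u$ with the Ricci operator of the deformed bracket $e^{uD}.\mu$ via \cite[sec~7.38]{Bes} --- exactly the reduction you outline.
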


An immediate consequence of \eqref{eq:Ricgroup} is the fact that the Ricci tensor of the metric Lie algebra $(\gn(u), \ipb)$ must be independent of $u$. One obvious case when this happens is when $D$ is a derivation of $\gn$, as then $e^{uD}$ is an automorphism, and so $e^{uD}.\mu(X,Y) = e^{uD}[e^{-uD}X,e^{-uD}Y] = \mu(X,Y)$. In that case, the resulting Einstein manifold $(M,g^D)$ is a Lie group with a left-invariant metric. Moreover, assuming Alekseevsky Conjecture, the manifold $(M,g^D)$ must be an Einstein solvmanifold (if $\det D \ne 0$, this follows from the fact that $\gn$ is nilpotent \cite{Jac}).

However, $D$ is not necessarily a derivation. The simplest example is when $D=\id$. Then $(N,g)$ is Ricci flat by Lemma~\ref{l:scalar}\eqref{it:sc1}, hence is flat by \cite{AK}, hence $\gn=\gn_1 \ltimes \gn_2$, an (orthogonal) semidirect product of the abelian algebras $\gn_1$ and $\gn_2$, with $\gn_2$ acting on $\gn_1$ by commuting skew-symmetric endomorphisms \cite{AK, BB1}. Therefore the algebra $\gn$ is not necessarily abelian, while $D=\id$ can be a derivation only of an abelian Lie algebra. Note however that $(N,g)$ is isometric to an abelian group and the extension $(M, g^D)$ is a solvable group with the hyperbolic metric. In the proof of Theorem~\ref{th:group}\eqref{it:decomphom} below, we will see more complicated examples of the same phenomenon. However, we know \emph{no examples of non-homogeneous Einstein extensions of a Lie group with a left-invariant metric by a left-invariant $D$}.
Under some additional assumptions on the structure of $\gn$, as in Theorem~\ref{th:group}, the fact that the extension $(M, g^D)$ is Einstein forces it to be an Einstein solvmanifold.

\begin{proof}[Proof of Theorem~\ref{th:group}]
\eqref{it:bpos} For $q \in \br$, introduce the sets $\mathcal{S}_q=\{(k,l,i) \, : \,  k \ne l, p_l+p_k-p_i=q\}$ and $P_q=\{i \, : \, p_i=q\}$. Let $Q = \{ q \in \br \, : \, P_q \cup \mathcal{S}_q \ne \varnothing\}$. Then \eqref{eq:scaluhom} gives
\begin{equation*}
    \scal^u =  -\sum\nolimits_{q \in Q} e^{-2uq}\bigl(\sum\nolimits_{k \in P_q}((\Tr\ad_k)^2+\tfrac12B(\overline{e}_k,\overline{e}_k)) +\tfrac14\sum\nolimits_{(k,l,i) \in \mathcal{S}_q} \mu_{kl|i}^{2}\bigr).
\end{equation*}
It follows that for all $q \in Q \setminus \{0\}$, we get $\sum\nolimits_{k \in P_q}((\Tr\ad_k)^2+\tfrac12B(\overline{e}_k,\overline{e}_k)) +\tfrac14\sum\nolimits_{(k,l,i) \in \mathcal{S}_q} \mu_{kl|i}^{2}=0$. As by assumption $B \ge 0$, all the terms on the left-hand side are zeros. Hence $\mu_{kl|i}=0$, unless $p_l+p_k = p_i$ and $k \ne l$, and $\Tr \ad_k=B(\overline{e}_k,\overline{e}_k)=0$, unless $p_k=0$. The former fact implies that for all $k,l$ we have $D[\overline{e}_k,\overline{e}_l]-[D\overline{e}_k,\overline{e}_l]-[\overline{e}_k,D\overline{e}_l]=\sum_i (p_i-p_k-p_l) \mu_{kl|i}=0$, so $D$ is a derivation of $\gn$. It follows that the extension $(M,g^D)$ is a metric Einstein Lie group, whose Lie algebra $\g$ is the extension of $\gn$ by the derivation $D$.

To see that $\g$ is solvable, consider the Killing form $B_\g$ of $\g$. As for all $X \in \gn$ we have $\ad_X e_0 \in \gn$, it follows that $B_\g(X,X)=B(X,X) \ge 0$. Moreover, as $\Tr \ad_k=0$, unless $p_k=0$, we get $\Tr \ad_{DX}=0$ for all $X \in \gn$, and so $B_\g(X, e_0)=0$ by \eqref{eq:mijjhom}. As $B_\g(e_0,e_0) = \Tr D^2 \ge 0$, the Killing form $B_\g$ is nonnegative, hence $\g$ is solvable \cite[Remark~4.8(a)]{Heb}.

\medskip

\eqref{it:decomphom} \eqref{it:decompD} Denote $\dim \h=d$ and let $\h = \Span(\overline{e}_a \, : \, a = 1, \dots, d), \; \gm = \Span(\overline{e}_k \, : \, k = d+1, \dots, n)$. Throughout the proof, the indices $a, b, c$ range from $1$ to $d$, and the indices $k,l,s$, from $d+1$ to $n$.  Note that $\mu_{kl|a} = \mu_{ab|k}=0$.

%
{
\begin{lemma} \label{l:mukldi}
{\ }

\begin{enumerate}[label=\emph{(\alph*)},ref=\alph*]
  \item \label{it:mutriv}
  $\mu_{ab|c}=\mu_{ab|k}=\mu_{kb|c}=\mu_{kl|c}=\mu_{ak|c}=0$, for all $a,b,c \le d < k, l$.

  \item \label{it:muakl}
  If $\mu_{ak|l} \ne 0$ for $a \le d < k, l$, then
  \begin{itemize}
    \item
    either $p_a=0$ and $p_k=p_l$,
    \item
    or $p_a \ne 0$ and either $p_k=p_l$ and then $\mu_{ak|l}+\mu_{al|k} = 0$, or $p_l-p_k-p_a=0$.
  \end{itemize}

  \item \label{it:mukls}
  For all $k, l, s > d$, we have $\mu_{sl|k}=0$ unless $p_k-p_l-p_s=0$.

  \item \label{it:muQN}
  $\sum\nolimits_{a: p_a = q} \big(\sum\nolimits_{s: p_s = p_k} \mu_{as|k}\mu_{as|l} - \sum\nolimits_{s: p_s = p_l} \mu_{ak|s}\mu_{al|s} \big)=0$, for  all $k, l > d$ with $p_l-p_k =q \ne 0$.
\end{enumerate}
\end{lemma}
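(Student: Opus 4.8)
The plan is to prove the four assertions in turn, \eqref{it:mutriv} being purely structural and \eqref{it:muakl}--\eqref{it:muQN} resting on the weight restriction of Lemma~\ref{l:scalar}\eqref{it:sc3} together with a matching of the exponentials $e^{u(\cdot)}$ occurring in \eqref{eq:Ricuijhom}, \eqref{eq:scaluhom} and the trace identity \eqref{eq:mijjhom}. I would dispose of \eqref{it:mutriv} using only that $\h$ is an abelian subalgebra and $\gm$ a nilpotent ideal, with no appeal to the Einstein condition: from $[\h,\h]=0$ we get $\mu_{ab|c}=\mu_{ab|k}=0$, and from $[\gn,\gm]\subseteq\gm$ the brackets $[\overline{e}_a,\overline{e}_k]$ and $[\overline{e}_k,\overline{e}_l]$ lie in $\gm$, so their $\h$-components vanish, giving $\mu_{ak|c}=\mu_{kl|c}=0$, while $\mu_{kb|c}=-\mu_{bk|c}=0$ by skew-symmetry. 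At this point I would record the two facts that drive the rest: for every $m>d$ the operator $\ad_{\overline{e}_m}$ maps all of $\gn$ into $\gm$ and is nilpotent there, so $\Tr\ad_m=0$ and $B(\overline{e}_m,\overline{e}_m)=0$; hence the mean curvature vector $H$ lies in $\h$, and every ``absorbing'' term $e^{-2up_m}$ of \eqref{eq:Ricuijhom}, \eqref{eq:scaluhom} that is indexed by $\gm$ drops out.

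For \eqref{it:muakl} and \eqref{it:mukls} the common engine is Lemma~\ref{l:scalar}\eqref{it:sc3}, which forces a nonzero $\mu_{ij|k}$ to satisfy $p_i+p_j-p_k\in\{0,p_1,\dots,p_n\}$. For $\mu_{ak|l}$ this leaves only the grading value $p_l=p_a+p_k$ (corresponding to $0$) and, when $p_a\neq0$, the value $p_a$ which gives $p_k=p_l$; for $\mu_{sl|k}$ with all three indices in $\gm$ it leaves $p_k=p_l+p_s$. To eliminate the remaining spurious values I would substitute the candidate constant into the appropriate equation $(\Ric^u)_i^j=0$ of \eqref{eq:Ricuijhom}, group the terms by their $u$-exponent, and use that distinct exponentials are linearly independent; since the $\gm$-indexed absorbing terms have been removed, each spurious exponent is isolated and its coefficient, a single $\mu^2$ or $\mu\mu'$ contribution, must vanish. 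The antisymmetry $\mu_{ak|l}+\mu_{al|k}=0$ in the $p_k=p_l$ case of \eqref{it:muakl} I would extract from the off-diagonal equation inside a fixed $D$-eigenspace of $\gm$: its part linear in the $\h$-action contributes $(\mu_{ak|l}+\mu_{al|k})\Tr\ad_a$ at the exponent $e^{-2up_a}$, and the residual case $\Tr\ad_a=0$ is settled with \eqref{eq:mijjhom}.

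Finally, \eqref{it:muQN} I would read off as the coefficient of $e^{-uq}$, $q=p_l-p_k\neq0$, in the off-diagonal Einstein equation $(\Ric^u)_k^l=0$ obtained by putting $i=k$, $j=l$ in \eqref{eq:Ricuijhom}. Using \eqref{it:mutriv}--\eqref{it:mukls} to discard every term that cannot reach this exponent, the only quadratic survivors are $\tfrac12\sum_{a:p_a=q}\sum_{s:p_s=p_k}\mu_{as|k}\mu_{as|l}$, coming from the block $\tfrac14e^{u(p_k+p_l)}\sum e^{-2u(p_s+p_t)}\mu_{st|k}\mu_{st|l}$ with one index in $\h$, and $-\tfrac12\sum_{a:p_a=q}\sum_{s:p_s=p_l}\mu_{ak|s}\mu_{al|s}$, coming from the block $-\tfrac12e^{-u(p_k+p_l)}\sum e^{2u(p_t-p_s)}\mu_{ks|t}\mu_{ls|t}$; the $\gm$-only parts of both blocks vanish, since they would force $p_k=p_l$. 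The one linear remnant, $-\tfrac12\sum_{a:p_a=q}\mu_{ak|l}\Tr\ad_a$, is removed with \eqref{eq:mijjhom}, and dropping the common factor yields the stated identity.

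The main obstacle will be the bookkeeping in \eqref{it:muakl} and \eqref{it:mukls}: Lemma~\ref{l:scalar}\eqref{it:sc3} alone admits several weight values, and cleanly separating the genuine grading relations from the spurious ones depends crucially on the cancellation produced by $\Tr\ad_m=B(\overline{e}_m,\overline{e}_m)=0$ on $\gm$, together with a careful tracking of which exponentials each of the five groups of terms in \eqref{eq:Ricuijhom} can produce; the same care is needed to check that the $\Tr\ad$-linear remnant in \eqref{it:muQN} really does drop out. Once the admissible weights are pinned down, assertions \eqref{it:mutriv} and \eqref{it:muQN} are essentially formal.
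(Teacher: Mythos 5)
Your part \eqref{it:mutriv} is exactly the paper's structural argument, and your reading of \eqref{it:muQN} also matches the paper's proof; indeed you correctly flag the linear remnant $-\tfrac12 e^{-uq}\sum_{a:p_a=q}\mu_{ak|l}\Tr\ad_a$, which the paper drops without comment (it vanishes by \eqref{it:muakl} itself: taking $k=l$ there gives $\mu_{ak|k}=0$ when $p_a\ne0$, hence $\Tr\ad_a=\sum_{k>d}\mu_{ak|k}=0$; equation \eqref{eq:mijjhom}, a single trace identity, does not do this on its own). The genuine gap is your derivation of the antisymmetry $\mu_{ak|l}+\mu_{al|k}=0$ in \eqref{it:muakl}. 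You propose to read it off from the $\Tr\ad$-linear part of the \emph{off-diagonal} equation inside a $D$-eigenspace of $\gm$. That coefficient is $-\tfrac12\sum_{a':\,p_{a'}=p_a}(\mu_{a'k|l}+\mu_{a'l|k})\Tr\ad_{a'}$: it is summed over the whole eigenvalue class, weighted by traces, and it shares the exponent $e^{-2up_a}$ with sign-indefinite quadratic contributions (from the Killing-form block and from terms $\mu_{a''k|s}\mu_{a''l|s}$ with $p_s=p_k$, $p_{a''}=p_{a'}$ in the last block of \eqref{eq:Ricuijhom}), so its vanishing yields nothing about individual structure constants. Worse, a posteriori $\Tr\ad_{a'}=0$ whenever $p_{a'}\ne0$, so the linear term you want to exploit is identically zero and carries no information; and the ``residual case $\Tr\ad_a=0$'' cannot be settled by \eqref{eq:mijjhom}, which cannot produce componentwise relations. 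The paper's actual mechanism is a completed square in the \emph{diagonal} equation $(\Ric^u)_a^a$, $p_a\ne0$: once the weight restrictions are established, the coefficient of $e^{-2up_a}$ there equals $-\tfrac12\bigl(B(\overline{e}_a,\overline{e}_a)+\sum_{k,l:\,p_k=p_l}\mu_{ak|l}^2\bigr)=-\tfrac14\sum_{k,l:\,p_k=p_l}(\mu_{ak|l}+\mu_{al|k})^2$, using $B(\overline{e}_a,\overline{e}_a)=\sum_{k,l}\mu_{ak|l}\mu_{al|k}$; being one-signed, it forces each $\mu_{ak|l}+\mu_{al|k}$ to vanish. This use of the Killing-form term is the key step your outline misses.

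The same looseness undermines \eqref{it:mukls}. Your principle that ``each spurious exponent is isolated and its coefficient, a single $\mu^2$ or $\mu\mu'$ contribution, must vanish'' is false as stated: in $(\Ric^u)_k^k$, $k>d$, the $\gm$-indexed quadratic terms enter with \emph{both} signs, $+\tfrac14 e^{2u(p_k-p_l-p_s)}\mu_{sl|k}^2$ and $-\tfrac12 e^{2u(p_l-p_s-p_k)}\mu_{ks|l}^2$, so for a fixed $k$ an isolated exponent can carry a sign-indefinite coefficient and no term-by-term conclusion follows. The paper needs two moves your proposal omits: first, the antisymmetry of \eqref{it:muakl} to see that the $\h$-indexed and trace parts of $(\Ric^u)_k^k$ are constant in $u$ (their non-constant pieces at exponent $e^{-2up_a}$ cancel exactly because $\mu_{ak|l}^2=\mu_{al|k}^2$); second, \emph{summation over} $k$, which converts the remaining $\gm$-part into the one-signed expression $-\tfrac14\sum_{k,s,l}e^{2u(p_k-p_l-p_s)}\mu_{sl|k}^2$, and only then does constancy in $u$ force \eqref{it:mukls}. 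Finally, Lemma~\ref{l:scalar}\eqref{it:sc3} does not ``leave only'' the values you list: it allows $p_a+p_k-p_l$ (resp.\ $p_s+p_l-p_k$) to equal \emph{any} eigenvalue $p_m$, so it buys nothing here -- the paper never invokes it in this lemma -- and the entire weight analysis must come from the Einstein equations with the sign structure tracked as above. With these repairs (the completed square for \eqref{it:muakl}, cancellation plus summation over $k$ for \eqref{it:mukls}) your outline becomes the paper's proof, and your treatment of \eqref{it:muQN}, which depends on both, is then sound.
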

\begin{proof}
\eqref{it:mutriv} is obvious, as $\h$ is abelian and is orthogonal to the derived algebra of $\g$.

\smallskip

\eqref{it:muakl} Denote $Q:=\{p_l - p_k \, : \, k,l =d+1, \dots, n\}$.
Take $i = j= a$ in equation \eqref{eq:Ricuijhom} (so that $\overline{e}_i = \overline{e}_j \in \h$). If $p_a = 0$ we get by \eqref{it:mutriv}
\begin{equation*}
(\Ric^u)_a^a= -(\Tr D^2) = -\tfrac12 B(\overline{e}_a,\overline{e}_a) - \tfrac12 \sum\nolimits_{k,l} e^{2u(p_l-p_k)}\mu_{ak|l}^2,
\end{equation*}
and so $\mu_{ak|l}=0$ unless $p_l=p_k$. If $p_a \ne 0$ we obtain
\begin{equation*}
\begin{split}
(\Ric^u)_a^a= &\; (\Tr D) p_a-(\Tr D^2) = -\tfrac12e^{-2up_a}\sum\nolimits_{k,l}\mu_{ak|l}\mu_{al|k} -\tfrac12e^{-2up_a}\sum\nolimits_{k,l} e^{2u(p_l-p_k)}\mu_{ak|l}^2\\
= &-\tfrac12 \sum\nolimits_{q \in Q \setminus \{0, p_a\}} e^{2u(q-p_a)}\sum\nolimits_{k,l: p_l-p_k=q}\mu_{ak|l}^2
-\tfrac12 \sum\nolimits_{k,l: p_l-p_k=p_a} \mu_{ak|l}^2 \\
&-\tfrac12e^{-2up_a}\Big(\sum\nolimits_{k,l} \mu_{ak|l} \mu_{al|k} + \sum\nolimits_{k,l: p_l=p_k} \mu_{ak|l}^2\Big).
\end{split}
\end{equation*}
It follows that $\mu_{ak|l} = 0$ unless $p_l-p_k \in \{0, p_a\}$. But then the expression in the last brackets equals $\frac12 \sum\nolimits_{k,l: p_l=p_k} (\mu_{ak|l}+\mu_{al|k})^2$ which implies that $\mu_{ak|l}+\mu_{al|k} = 0$ when $p_l=p_k$.

\smallskip

\eqref{it:mukls} Take $i = j = k$ (so that $\overline{e}_i = \overline{e}_j \in \gm$) in \eqref{eq:Ricuijhom}. Using \eqref{it:mutriv} we get
\begin{equation*}
\begin{split}
(\Ric^u)_k^k= &\; (\Tr D) p_k-(\Tr D^2) = - \sum\nolimits_{a} e^{-2u p_a} \mu_{ak|k} \Tr \ad_a \\
&+\tfrac12 e^{2u p_k} \sum\nolimits_{a,l} e^{-2u(p_l+p_a)} \mu_{al|k}^2 - \tfrac12 e^{-2u p_k} \sum\nolimits_{a,l} e^{2u(p_l-p_a)} \mu_{ak|l}^2\\
&+\tfrac14 e^{2u p_k}\sum\nolimits_{s,l} e^{-2u(p_l+p_s)}\mu_{sl|k}^2 - \tfrac12 e^{-2u p_k} \sum\nolimits_{s,l} e^{2u(p_l-p_s)} \mu_{ks|l}^2.\\
\end{split}
\end{equation*}
But the sum of the first three terms on the right-hand side is constant (does not depend on $u$) by \eqref{it:muakl}, and therefore the sum of the last two terms must also be a constant. Summing up these sums by $k$ we get $-\tfrac14 \sum\nolimits_{k,s,l} e^{2u(p_k-p_l-p_s)} \mu_{sl|k}^2$, and the claim follows.

\smallskip

\eqref{it:muQN} Let $k,l > d$ with $p_l-p_k=q \ne 0$. Using \eqref{it:mutriv}, \eqref{it:muakl} and \eqref{it:mukls} we obtain
\begin{equation*}
\begin{split}
0=(\Ric^u)_k^l= & \; \tfrac12e^{u(p_k+p_l)}\sum\nolimits_{a,s}e^{-2u(p_s+p_a)}\mu_{as|k}\mu_{as|l} -\tfrac12e^{-u(p_k+p_l)}\sum\nolimits_{a,s}e^{2u(p_s-p_a)}\mu_{ak|s}\mu_{al|s}\\
= & \; \tfrac12e^{u(p_k+p_l)}\sum\nolimits_{a,s: (p_a, p_s) \in \{(q, p_k), (-q,p_l)\}}e^{-2u(p_s+p_a)}\mu_{as|k}\mu_{as|l} \\
& \; - \tfrac12e^{-u(p_k+p_l)}\sum\nolimits_{a,s: (p_a, p_s) \in \{(-q,p_k),(q,p_l)\}}e^{2u(p_s-p_a)}\mu_{ak|s}\mu_{al|s}\\
= & \; \tfrac12e^{-uq} \sum\nolimits_{a: p_a = q} \Big(\sum\nolimits_{s: p_s = p_k} \mu_{as|k}\mu_{as|l} - \sum\nolimits_{s: p_s = p_l} \mu_{ak|s}\mu_{al|s} \Big)\\
& \; + \tfrac12e^{uq}\sum\nolimits_{a: p_a = -q} \Big(\sum\nolimits_{s: p_s = p_l} \mu_{as|k}\mu_{as|l} - \sum\nolimits_{s: p_s = p_k} \mu_{ak|s}\mu_{al|s}\Big),
\end{split}
\end{equation*}
and the claim follows.
\end{proof}
}

The claim of Lemma~\ref{l:mukldi}\eqref{it:mukls} is equivalent to the fact that the restriction of $D$ to $\gm$ is a derivation. The restriction of $D$ to $\h$ is also a derivation, as $\h$ is abelian. But $D$ may fail to be a derivation of the whole algebra $\gn$, as by Lemma~\ref{l:mukldi}\eqref{it:muakl}, the expression $(p_l-p_k-p_a) \mu_{ak|l}$ is not necessarily zero. To ``fix" that we will modify $\gn$ by a twisting, but first we will further clarify the action of $\ad_\h$ on $\gm$.

Let $\{q_1, \dots, q_m\} = \{p_{d+1}, \dots, p_n\}$ be the eigenvalues of the restriction of $D$ to $\gm$ labelled in such a way that $q_1 < \dots < q_m$, and let $d_\a, \; \a=1, \dots, m$, be the multiplicity of $q_\a$. Specify the basis $\overline{e}_k, \; k =d+1, \dots, n$, for $\gm$ in such a way that $p_{d+1} = \dots = p_{d+d_1} = q_1, \; p_{d+d_1+1} = \dots = p_{d+d_2} = q_2, \dots, p_{n-d_m+1} = \dots = p_n = q_m$. For $p_a=0$, denote $T_a$ the matrix of the restriction of $\ad_{\overline{e}_a}$ to $\gm$ relative to the chosen basis for $\gm$ (note that $\ad_{\overline{e}_a}$ acts trivially on $\h$). We have $(T_a)_{kl}=\mu_{al|k}$, and so by Lemma~\ref{l:mukldi}\eqref{it:muakl}, the matrix $T_a$ is block-diagonal with the diagonal blocks having dimensions $d_1 \times d_1, \, d_2 \times d_2, \dots, d_m \times d_m$, in that order (so that $T_a$ commutes with $D_{|\gm}$).

If $p_a \ne 0$, then $\ad_{\overline{e}_a}$ still acts trivially on $\h$. For the restriction of $\ad_{\overline{e}_a}$ to $\gm$, relative to the chosen basis for $\gm$, we have $(\ad_{\overline{e}_a})_{kl} = \mu_{al|k}$, and so by Lemma~\ref{l:mukldi}\eqref{it:muakl}, $(\ad_{\overline{e}_a})_{|\gm} = Q_a + N_a$, where $Q_a$ is a block-diagonal skew-symmetric matrix whose diagonal blocks have dimensions $d_1 \times d_1, \, d_2 \times d_2, \dots, d_m \times d_m$, in that order (so that $Q_a$ commutes with $D_{|\gm}$), and $N_a$ is a strictly upper or lower triangular matrix (depending on the sign of $p_a$) which may only have nonzero entries in the blocks $d_\a \times d_\b$ such that $q_\a-q_\b=p_a$ (so that $[D_{|\gm}, N_a] = p_a N_a$). In terms of the $\mu$'s, when $p_a \ne 0$, we have
\begin{equation}\label{eq:QNmu}
\begin{gathered}
  (Q_a)_{kl}=\mu_{al|k}, \quad Q_a^t = -Q_a, \quad (Q_a)_{kl} \ne 0 \,\Rightarrow \, p_k=p_l, \\
  (N_a)_{kl}=\mu_{al|k}, \quad (N_a)_{kl} \ne 0 \,\Rightarrow \, p_k=p_l+p_a.
\end{gathered}
\end{equation}
Let $d_0 \ge 0$ be the multiplicity of the eigenvalue $0$ of the restriction of $D$ to $\h$. Relabel the basis $\overline{e}_a, \; a =1, \dots, d$, for $\h$ in such a way that $p_1=\dots=p_{d_0}=0$ and $p_a \ne 0$ for $d_0 < a \le d$. We have the following lemma.

{
\begin{lemma} \label{l:allcommute}
All the matrices $T_a, Q_b, N_b$, where $1 \le a \le d_0, \; d_0 < b \le d$, pairwise commute.
\end{lemma}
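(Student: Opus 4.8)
The plan is to reduce the whole statement to the uniqueness of the Jordan--Chevalley decomposition. Since $\h$ is abelian and $\gm$ is an ideal, the operators $A_a:=(\ad_{\overline{e}_a})_{|\gm}$ satisfy $[A_a,A_{a'}]=(\ad_{[\overline{e}_a,\overline{e}_{a'}]})_{|\gm}=0$, so $\{A_1,\dots,A_d\}$ is a commuting family on $\gm$. For $a\le d_0$ we have $A_a=T_a$, while for $b>d_0$ we have $A_b=Q_b+N_b$, where by \eqref{eq:QNmu} $Q_b$ is skew-symmetric (hence semisimple over $\bc$) and $N_b$ is strictly triangular (hence nilpotent). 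The key observation is that once we know $[Q_b,N_b]=0$, the splitting $A_b=Q_b+N_b$ is, by uniqueness, exactly the Jordan--Chevalley decomposition of $A_b$, so $Q_b=(A_b)_s$ and $N_b=(A_b)_n$. The semisimple and nilpotent parts of an operator are polynomials in that operator (over the perfect field $\br$), and polynomials in two commuting operators commute; hence all of $T_a=A_a$, $Q_b=(A_b)_s$, $N_b=(A_b)_n$ pairwise commute, which is the assertion. Note that this single step disposes simultaneously of the delicate ``resonant'' case $p_b=-p_{b'}$, where the grading of $[A_b,A_{b'}]=0$ by itself only yields the combined relation $[Q_b,Q_{b'}]+[N_b,N_{b'}]=0$.

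Thus the whole proof reduces to establishing $[Q_b,N_b]=0$ for each $b>d_0$, which is where the Einstein condition enters and which I expect to be the main obstacle. Fix a nonzero value $q$ and set $\mathcal B_q=\{b:\ p_b=q\}$. Grading $\End(\gm)$ by the eigenvalues of $\ad_{D_{|\gm}}$, the $Q_b$ have weight $0$ and the $N_b$ weight $q$; extracting the weight-$q$ component of $0=[A_b,A_{b'}]$ for $b,b'\in\mathcal B_q$ gives the symmetry relation $[Q_b,N_{b'}]=[Q_{b'},N_b]$. On the other hand, rewriting the structure constants in Lemma~\ref{l:mukldi}\eqref{it:muQN} in terms of the matrices $Q_b,N_b$ (using $Q_b^t=-Q_b$ from \eqref{eq:QNmu}) turns that relation into $\sum_{b\in\mathcal B_q}[Q_b,N_b]=0$.

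To combine these two facts I would complexify and simultaneously diagonalise the commuting skew-symmetric family $\{Q_b:\ b\in\mathcal B_q\}$ (these commute because the weight-$0$ part of $[A_b,A_{b'}]$ vanishes, $2q\ne0$), obtaining a joint eigenspace decomposition $\gm^{\bc}=\bigoplus_\chi E_\chi$ refining the $D_{|\gm}$-eigenspaces, with $Q_b|_{E_\chi}=\mathrm i\lambda_b(\chi)\,\id$. On the block $\Hom(E_\chi,E_{\chi'})$, with $\chi,\chi'$ at $D_{|\gm}$-levels differing by $q$, the operator $[Q_b,N_{b'}]$ acts as $\mathrm i\,\Delta_b\,n_{b'}$, where $\Delta_b=\lambda_b(\chi')-\lambda_b(\chi)$ and $n_{b'}$ is the corresponding block of $N_{b'}$. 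The symmetry relation reads $\Delta_b n_{b'}=\Delta_{b'}n_b$, so on any block where some $\Delta_{b_0}\ne0$ we get $n_b=\Delta_b m$ for a fixed $m$; feeding this into the trace relation $\sum_b\Delta_b n_b=0$ yields $\bigl(\sum_b\Delta_b^2\bigr)\,m=0$, whence $m=0$ and all blocks $n_b$ vanish. Consequently $[Q_b,N_{b'}]=0$ for all $b,b'\in\mathcal B_q$, in particular $[Q_b,N_b]=0$, which is exactly the crux needed in the first paragraph.

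The main difficulty is precisely this separation of the commutation relation into its individual $Q$--$N$ pieces within a fixed weight class: the abelian structure alone only produces the symmetric combination $[Q_b,N_{b'}]+[N_b,Q_{b'}]$, and it is the Einstein relation \eqref{it:muQN}, together with the positivity $\sum_b\Delta_b^2>0$ coming from skew-symmetry, that forces each piece to vanish. Everything else—the opposite-weight case and all commutators involving the $T_a$—then follows formally from the Jordan--Chevalley reduction, since each of $T_a,Q_b,N_b$ is a polynomial in one of the pairwise commuting operators $A_a$.
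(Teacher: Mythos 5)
Your proof is correct, and while it relies on the same two essential inputs as the paper's, it organises them along a genuinely different route. The shared inputs are: (i) the weight decomposition of $0=[A_b,A_{b'}]$ under $\ad_{D_{|\gm}}$, which within a fixed class $\mathcal{B}_q$ yields $[Q_b,Q_{b'}]=0$, $[N_b,N_{b'}]=0$ and your symmetry relation $[Q_b,N_{b'}]=[Q_{b'},N_b]$ (the paper's case~\eqref{it:+}); and (ii) the translation of the Einstein relation of Lemma~\ref{l:mukldi}\eqref{it:muQN} into $\sum_{b\in\mathcal{B}_q}[Q_b,N_b]=0$ using $Q_b^t=-Q_b$ --- the paper carries out exactly this computation. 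The difference lies in how these are exploited. For the crux (killing the commutators inside one weight class), the paper stays with real matrices: it commutes $\sum_a[Q_a,N_a]=0$ with $Q_b$, uses the Jacobi identity and the symmetry relation to get $\sum_a[Q_a,[Q_a,N_b]]=0$, and pairs with $N_b^t$ in the trace form, so that skew-symmetry gives $\sum_a\Tr([Q_a,N_b][Q_a,N_b]^t)=0$ and hence $[Q_a,N_b]=0$; you instead complexify, simultaneously diagonalise the commuting skew-symmetric family, and reach the same conclusion $[Q_b,N_{b'}]=0$ by an elementary blockwise identity together with the positivity $\sum_b\Delta_b^2>0$. For everything outside a single weight class, the paper runs a case analysis on the block structure, devoting a separate commutator-and-trace argument to the resonant case $p_a=-p_b$ (its case~\eqref{it:-}); your Jordan--Chevalley observation --- once $[Q_b,N_b]=0$ is known, $Q_b=(A_b)_s$ and $N_b=(A_b)_n$ are polynomials in $A_b$, so all remaining commutators vanish because the operators $A_a$ pairwise commute --- disposes of the $T_a$-commutators and the resonant case in one stroke. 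What each approach buys: yours is more conceptual, handles the resonant case for free, and makes transparent that the Einstein condition is needed only within a single weight class; the paper's is self-contained real matrix algebra, requiring neither complexification nor the uniqueness of the Jordan--Chevalley decomposition.
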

\begin{proof}
As $\h$ is abelian, the operators $\ad_{\overline{e}_a}$ commute. Then $[T_a, T_b] = 0$, for all $a, b \le d_0$. Moreover, for all $a, b$ such that $1 \le a \le d_0 < b \le d$, we have $[T_a, Q_b + N_b] = 0$, and so $[T_a, Q_b] = [T_a, N_b] = 0$, because $T_a$ and $Q_b$ are block-diagonal, but all the nonzero blocks of $N_b$ are outside the diagonal.

Furthermore, for $a, b > d_0$, we have $0 = [Q_a+N_a, Q_b+N_b] = [Q_a, Q_b] + [Q_a, N_b] + [N_a, Q_b] +  [N_a, N_b]$. The same argument on the block structure of the $Q_a$'s and $N_a$'s now implies that $[Q_a, Q_b] = [Q_a, N_b] = [Q_b, N_a] = [N_a, N_b] = 0$, with only two possible exceptions:
\begin{enumerate}[label=(\Alph*),ref=\Alph*]
  \item \label{it:-}
  $p_a = - p_b \ne 0$; then $[N_a, N_b]$ is block-diagonal and so we only get $[Q_a, N_b] = [Q_b, N_a] = 0$ and $[Q_a, Q_b]+[N_a,N_b]=0$.
  \item \label{it:+}
  $p_a = p_b \ne 0$; then $[Q_a, N_b]$ and $[N_a, Q_b]$ have nonzero blocks at the same places and so we only get $[Q_a, Q_b] = [N_a, N_b] = 0$ and $[Q_a, N_b]+[N_a,Q_b]=0$.
\end{enumerate}
Consider case \eqref{it:+} first. Denote $q=p_a = p_b \ne 0$, and let $k, l > d$ be such that $p_l-p_k=q$ (if no such pair $(k,l)$ exists, then $N_a=0$, for all $a$ with $p_a=q$, by Lemma~\ref{l:mukldi}\eqref{it:muakl}). Then by Lemma~\ref{l:mukldi}\eqref{it:muQN} and from \eqref{eq:QNmu} we get
\begin{equation*}
\begin{split}
0 &=\sum\nolimits_{a: p_a = q} \Big(\sum\nolimits_{s: p_s = p_k} \mu_{as|k}\mu_{as|l} - \sum\nolimits_{s: p_s = p_l} \mu_{ak|s}\mu_{al|s} \Big) \\
&= \sum\nolimits_{a: p_a = q} \Big(\sum\nolimits_{s: p_s = p_k} (Q_a)_{ks} (N_a)_{ls} - \sum\nolimits_{s: p_s = p_l} (Q_a)_{sl} (N_a)_{sk} \Big)\\
&= \sum\nolimits_{a: p_a = q} ((Q_aN_a^t)_{kl} - (N_a^tQ_a)_{kl}),
\end{split}
\end{equation*}
as $(Q_a)_{ks} = 0$ when $p_s \ne p_k$ and $(Q_a)_{sl} = 0$ when $p_s \ne p_l$, by \eqref{eq:QNmu}. But then $\sum\nolimits_{a: p_a = q} [Q_a, N_a^t]_{kl} = 0$, and since all the entries $[Q_a, N_a^t]_{kl}$ with $p_l-p_k \ne q$ are zeros from \eqref{eq:QNmu} we get $\sum\nolimits_{a: p_a = q} [Q_a, N_a^t] = 0$, that is, $\sum\nolimits_{a: p_a = q} [Q_a, N_a] = 0$, as the matrices $Q_a$ are skew-symmetric.

Now take the commutator of the latter equation with $Q_b$ such that $p_b=q$. As in our case $[Q_a, Q_b] = 0$ and $[Q_a, N_b]=[Q_b,N_a]$ we obtain $0=\sum\nolimits_{a: p_a = q} [Q_b,[Q_a, N_a]] = \sum\nolimits_{a: p_a = q} [Q_a,[Q_b, N_a]] = \sum\nolimits_{a: p_a = q} [Q_a,[Q_a, N_b]]$. Multiplying by $N_b^t$ and taking the trace we obtain $0= \sum\nolimits_{a: p_a = q} \Tr([Q_a,[Q_a, N_b]] N_b^t)= -\sum\nolimits_{a: p_a = q} \Tr([Q_a, N_b] [Q_a,N_b]^t)$, as the matrices $Q_a$ are skew-symmetric. It follows that $[Q_a, N_b]=0$, for all $a, b$ such that $p_a=p_b \ne 0$.

\smallskip

Now consider case \eqref{it:-}. We have $[Q_a, Q_b]+[N_a,N_b]=0$. Taking the commutator with $Q_b$ and using the fact that $Q_b$ and $N_a$ commute we get $0=[Q_b,[Q_a, Q_b]]+[Q_b,[N_a,N_b]]=[Q_b,[Q_a, Q_b]]+[N_a,[Q_b,N_b]]=[Q_b,[Q_a, Q_b]]$, as $[Q_b,N_b]=0$ from case \eqref{it:+}. But then multiplying by $Q_a$ and taking the trace we obtain $\Tr ([Q_a, Q_b]^2)=0$, and so $[Q_a, Q_b]=0$.
\end{proof}
}
For $b=d_0+1, \dots, d$ define the operators $\mathcal{Q}_b, \mathcal{N}_b \in \End(\gn)$ by
\begin{equation*}
  \mathcal{Q}_b (\h) = \mathcal{N}_b (\h) = 0, \qquad \mathcal{Q}_b X = Q_b X, \; \mathcal{N}_b X = N_b X \; \text{for } X \in \gm.
\end{equation*}
Note that  $\ad_{\overline{e}_b} = \mathcal{Q}_b + \mathcal{N}_b$. Moreover, as both $D_{|\gm}$ and $(\ad_{\overline{e}_b})_{|\gm} = Q_b + N_b$ are derivations of $\gm$ and as $[D_{|\gm}, Q_b]=0,\; [D_{|\gm}, N_b] = p_b N_b, \; p_b \ne 0$, all the $Q_b$'s and $N_b$'s are derivations of $\gm$. It follows from Lemma~\ref{l:allcommute} that the operators $\ad_{\overline{e}_a}, \; a \le d_0$ and $\mathcal{Q}_b, \mathcal{N}_b, \; b > d_0$ are commuting derivations of the whole algebra $\gn$.

We now consider the metric solvable Lie algebra $\gn'$, defined on the same underlying linear space as $\gn$, with the same inner product $\ip'=\ipb$, and with the Lie bracket $[ \cdot, \cdot]'$ defined as follows:
\begin{equation*}
  \ad_{\overline{e}_k}'=\ad_{\overline{e}_k}, \; k > d, \qquad \ad_{\overline{e}_a}'=\ad_{\overline{e}_a}, \; a \le d_0, \qquad \ad_{\overline{e}_b}'=\mathcal{N}_b, \; d_0 < b \le d.
\end{equation*}
Then $\gn'$ is indeed a Lie algebra, and what is more, $D$ is a symmetric derivation of $(\gn',\ip')$. The algebra $\gn'$ is obtained from $\gn$ by the twisting $X \mapsto X + \phi(X)$, where $\phi$ is the homomorphism from $\gn$ to the Lie algebra of skew-symmetric derivations of $\gn$ defined on the basis by $\phi(\overline{e}_b) = -\mathcal{Q}_b$ for $d_0 < b \le d$, and $\phi(\overline{e}_k) = \phi(\overline{e}_a) = 0$ for $a \le d_0$ and $k > d$. It follows from \cite[\S~2]{Aneg} that the metric Lie groups $(N, g)$ and $(N', g')$ are isometric ($\gn'$ is a standard modification of $\gn$ \cite{GW}). Furthermore, as the field of endomorphisms $D$ defined on the underlying Riemannian space $(N, g)$ is $N$-left-invariant and as $[D,\phi(X)]=0$, for all $X \in \gn$, it is also $N'$-left-invariant.

\medskip

\eqref{it:decomphom} \eqref{it:ni1ab} We will prove that $D$ respects the decomposition $\gn=\h \oplus \gm$. Then the claim will follow from assertion \eqref{it:decompD}.

Denote $\dim \h=d$ and let $\h = \Span(E_a \, : \, a = 1, \dots, d), \; \gm = \Span(E_k \, : \, k = d+1, \dots, n)$. The index $a$ will range from $1$ to $d$, and the index $l$, from $d+1$ to $n$.

Polarising \eqref{eq:Ricgroup} with $u=0$ we obtain that for all $X \in \gm, \; Y \in \h$,
\begin{equation*}
\overline{\<\Ric^{\gn(0)}X,Y\>} = 0 = (\Tr D) \overline{\<DX,Y\>}.
\end{equation*}
If $\Tr D \ne 0$, we are done. Suppose $\Tr D = 0$. Then it follows from \eqref{eq:Ricgroup} that for each $u \in \br$, the metric Lie algebra $(\gn(u), \ipb)$ is Einstein, with the Einstein constant $-(\Tr D^2)$. The same is true for the metric Lie algebra $(\gn, \ipb_u)$ which is isometrically isomorphic to $(\gn(u), \ipb)$. Taking $u=0$ and $X \in \gm$ in \eqref{eq:Ricgroup} we obtain
\begin{equation*}
\begin{split}
    \overline{\<\Ric^{\gn(0)}X ,X\>} &= -\overline{\<\ad_{H}X,X\>} + \tfrac12 \sum\nolimits_{a,l}\overline{\<[E_a, E_l], X\>}^2 -\tfrac12 \Tr(\ad_{X}^*\ad_{X}) \\
    & = -\overline{\<\ad_{H}X,X\>} +\tfrac12 \sum\nolimits_{a,l}\overline{\<\ad_{E_a}^* X, E_l\>}^2 -\tfrac12 \sum\nolimits_{a} \overline{\<\ad_{X}^*\ad_{X} E_a,E_a\>} \\
    &= -\overline{\<\ad_{H}X,X\>} + \tfrac12 \sum\nolimits_{a}\overline{\|\ad_{E_a}^* X\|}^2 -\tfrac12 \sum\nolimits_{a} \overline{\|\ad_{E_a}X\|}^2 \\
    &= -(\Tr D^2)\overline{\|X\|}^2,
\end{split}
\end{equation*}
and so
\begin{equation} \label{eq:AH+AH'}
  -\tfrac12(A_H+A_H^*) +\tfrac12 \sum\nolimits_{a}[A_{E_a}, A_{E_a}^*]= -(\Tr D^2) \id_{\gm},
\end{equation}
where $A_Y, \; Y \in \h$, is the restriction of $\ad_Y$ to $\gm$.

In particular, taking the traces of both sides of \eqref{eq:AH+AH'} we obtain
\begin{equation} \label{eq:H^2}
    (n-d)\Tr(D^2)= \Tr A_H = \Tr \ad_H = \|H\|^2.
\end{equation}
If $\gn$ is unimodular, then $H=0$, and so $D=0$ (which is clearly a derivation). Moreover, $(N,g)$ is flat and $(M,g^D)$ is the Riemannian product of $(N,g)$ and the line. Suppose $\gn$ is non-unimodular. Then $(\gn, \ipb)$ is a standard metric solvable Einstein Lie algebra. In particular, by \cite[Corollary~4.11]{Heb}, the derived algebra of $\gn$ coincides with $\gm$, and by \cite[Theorem~4.10(1)]{Heb}, all the operators $\ad_Y, \; Y \in \h$, are normal. Then equation \eqref{eq:AH+AH'} gives
\begin{equation} \label{eq:AH+AH'new}
  \tfrac12(A_H+A_H^*) = (\Tr D^2) \id_{\gm}.
\end{equation}

Every algebra $(\gn, \ipb_u), \; u \in \br$, is a metric solvable non-unimodular Lie algebra, with the same Einstein constant $-\Tr(D^2)$. As any Einstein solvmanifold is standard by \cite{Lstand}, the $\ipb_u$-orthogonal complement $\h_u$ to $\gm=[\gn, \gn]$ must be abelian. We have $\h_u = e^{-2uD}\h$, and so $[e^{-2uD}X, e^{-2uD}Y]=0$, for all $X, Y \in \h$. Differentiating by $u$ at $u=0$ we obtain
\begin{equation}\label{eq:huabelian}
  [DX, Y] + [X, DY] = 0, \quad \text{for all} X, Y \in \h.
\end{equation}
Furthermore, equation~\eqref{eq:H^2} still holds if we replace $\ipb$ with $\ipb_u$ and $H$ with $H_u$, the mean curvature vector of $(\gn, \ipb_u)$ defined by $\<H_u, X\>_u = \Tr \ad_X$, for all $X \in \gn$. We have $\<e^{uD}H_u, e^{uD}Y\>=\<H,Y\>$ and so $H_u=e^{-2uD}H$. Then from~\eqref{eq:H^2} we get $(n-d)\Tr(D^2)=\|H_u\|^2_u=\<e^{-2uD}H,H\>$. Decomposing $H$ by an orthonormal basis of eigenvectors of $D$ we obtain $DH=0$. Then from \eqref{eq:huabelian} with $X=H$ we obtain $DY \in \Ker \ad_H$, for all $Y \in \h$. Clearly, $\h \subset \Ker\ad_H$, and if $X \in \gm \cap \Ker\ad_H$, then by \eqref{eq:AH+AH'new} we get $0= \tfrac12\<(A_H+A_H^*)X, X\> = (\Tr D^2) \|X\|^2$, and so $X = 0$, as $D \ne 0$. It follows that $D\h \subset \h$, and so $D$ preserves the decomposition $\gn=\h\oplus\gm$.
\end{proof}

	\bibliographystyle{amsplain} 
	\bibliography{oneext}

\end{document}